\newtheorem{thm}{Theorem}[section]
\newtheorem{lem}[thm]{Lemma}
\newtheorem{cor}[thm]{Corollary}
\newtheorem{defn}[thm]{Definition}
\numberwithin{equation}{section}
\newcommand{\bel}{\begin{equation} \label}
\newcommand{\ee}{\end{equation}}
\def\beq{\begin{equation}}
\def\eeq{\end{equation}}
\newcommand{\bea}{\begin{eqnarray}}
\newcommand{\eea}{\end{eqnarray}}
\newcommand{\beas}{\begin{eqnarray*}}
\newcommand{\eeas}{\end{eqnarray*}}
\newcommand{\pd}{\partial}
\newcommand{\dd}{\mbox{d}}
\newcommand{\re}{\mathfrak R}
\newcommand{\R}{\mathbb{R}}
\newcommand{\C}{\mathbb{C}}
\def\epsilon{\varepsilon}
\def\phi {\varphi}
\providecommand{\abs}[1]{\left\lvert#1\right\rvert}
\providecommand{\norm}[1]{\left\lVert#1\right\rVert}
\renewcommand{\leq}{\leqslant}
\renewcommand{\geq}{\geqslant}
\providecommand{\abs}[1]{\left\lvert#1\right\rvert}
\providecommand{\norm}[1]{\left\lVert#1\right\rVert}
\title{\bf Simultaneous determination of coefficients,  internal sources and an obstacle  of a diffusion equation from a single measurement}
\author{Yavar Kian}
\address{Aix Marseille Univ, Universit\'e de Toulon, CNRS, CPT, Marseille, France.}
\email{yavar.kian@univ-amu.fr}
\begin{document}

\begin{abstract} This article is devoted to the simultaneous  resolution of three inverse problems, among the most important formulation of inverse problems for partial differential equations, stated for some class of diffusion equations from a single boundary measurement. Namely, we consider the simultaneous unique determination of several class of coefficients,  some internal sources  (a source term and an initial condition) and an obstacle appearing in a diffusion equation from a single boundary measurement. Our  problem can be formulated as the simultaneous determination of information about a diffusion process (velocity field, density of the medium), an obstacle and of the source of diffusion. We consider this problems in the context of a classical diffusion process described by a convection-diffusion equation as well as an anomalous diffusion phenomena  described by a time fractional diffusion equation.\\

{\bf Keywords:} Inverse problems, diffusion equation, inverse coefficient problem, inverse source problem, inverse obstacle problem,   uniqueness, partial data.\\

\medskip
\noindent
{\bf Mathematics subject classification 2010 :} 35R30, 	35R11.

\end{abstract}

\maketitle

\section{Introduction}

\subsection{Statement of the problem}

Let $\tilde{\Omega}$ and $\omega$ be two bounded  open set  of $\R^d$, $d \geq 2$, with $\mathcal C^{2}$ boundary, such that $\overline{\omega}\subset \tilde{\Omega}$ and such that $\Omega=\tilde{\Omega}\setminus \overline{\omega}$ is connected. Let $a \in \mathcal C^1(\overline{\Omega})$  satisfy the  condition
\bel{ell}
\exists c>0,\ a(x) \geq c ,\ x \in \Omega.
\ee
Fix  $q \in L^\infty(\Omega)$, such that 
\bel{a9}
q\geq 0  
\ee
and $B\in L^\infty(\Omega)^d$.
Given $T\in(0,+\infty)$, $\alpha\in(0,2)$  and 
$\rho \in L^\infty(\Omega)$, such that
\bel{eq-rho}
 0<\rho_0 \leq\rho(x) \leq\rho_M <+\infty,\quad x \in \Omega, 
\ee
we consider the initial boundary value problem (IBVP) 
\begin{equation}\label{eq1}
\begin{cases}
\rho(x)\partial_t^{\alpha}u -\textrm{div}\left(a(x) \nabla_x u \right)+B(x)\cdot\nabla_x u+q(x) u =  F, & \mbox{in }Q,\\
u= \Phi, & \mbox{on } (0,T)\times\partial\tilde{\Omega}, \\
 u=0,& \mbox{on }(0,T)\times\partial\omega,\\
\begin{cases}
u=u_0 & \mbox{if }0<\alpha\leq1,\\
u=u_0,\quad \partial_t u=0 & \mbox{if }1<\alpha<2,
\end{cases} & \mbox{in }\{0\}\times \Omega.
\end{cases}
\end{equation}
Here, we fix $Q=(0,T)\times\Omega$ and for $\alpha=1$ we denote by $\partial_t^\alpha$ the usual time derivative $\partial_t$ while, for $\alpha\in(0,1)\cup(1,2)$, $\partial_t^{\alpha}$ denotes the fractional Caputo derivative of order $\alpha$ with respect to $t$ defined by
\bel{cap} 
\pd_t^\alpha u(t,x):=\frac{1}{\Gamma([\alpha]+1-\alpha)}\int_0^t(t-s)^{[\alpha]-\alpha}\pd_s^{[\alpha]+1} u(s,x) \dd s,\ (t,x) \in Q.
\ee

Assuming that $\Phi\in W^{2,1}(0,T;H^{\frac{3}{2}}(\partial\tilde{\Omega}))$, $u_0\in L^2(\Omega)$, $F\in L^1(0,T;L^2(\Omega))$, it is well known that problem \eqref{eq1} admits a unique weak solution lying in $L^1(0,T;H^{2r}(\Omega))$, $r\in(0,1)$ (see e.g. \cite{JLLY,KY2,KSY,SY}).

Let $\Gamma_{in},\Gamma_{out}$ be two open subsets of $\partial\tilde{\Omega}$. In the present paper we study the inverse  problem of determining uniquely and simultaneously as much parameters as possible among the set  $\{a,\rho,B,q\}$ of coefficients, the set $\{u_0,F\}$ of internal source, the order of derivation in time $\alpha$ as well as the obstacle $\omega$ from a single boundary measurement  on the subset of the form $(T_0-\delta,T_0)\times\Gamma_{out}$, with $T_0\in(0,T)$ and $\delta\in(0,T_0]$, of the lateral boundary $(0,T)\times\partial\tilde{\Omega}$ for a suitable choice of the input $\Phi$ supported on $[0,T]\times\Gamma_{in}$.

\subsection{Motivations}

Let us mention that  diffusion equations  
of the form 
\eqref{eq1} describe       
diffusion of different kind of physical phenomena. While for $\alpha=1$ such equations correspond to convection-diffusion equations describing the transfer of different physical quantities (mass, energy, heat,...), for $\alpha\neq1$, equations of the form \eqref{eq1} are used for modeling different type of anomalous diffusion  process (diffusion  in inhomogeneous anisotropic porous media, turbulent plasma,  diffusion in a
turbulent flow,...). We refer to \cite{CSLG,JR,St} for more details about the applications of such equations.

The inverse problem addressed in the present paper corresponds to the simultaneous determination of a source of diffusion, an obstacle and of several parameters describing the  diffusion of some physical quantities. The convection term $B$ is associated 
with the velocity field of the moving quantities while the coefficients $(a,\rho,q)$ and the order of derivation $\alpha$  can be associated with some properties of the medium. Moreover, the source term $F$ and the initial condition $u_0$ can be seen as different kind of source of diffusion. For instance, our inverse problem can be stated as the determination of the velocity field and the density of the medium  as well as an obstacle and the source of diffusion of a contaminant in a soil from a single measurement at $\Gamma_{out}$. Moreover, for $\alpha\in(1,2)$, $\omega=\emptyset$, $F=0$, $B=q=0$ and $\rho=c^{\alpha}$, our inverse problem can be seen as the fractional formulation of the so called thermoacoustic tomography (TAT) and photoacoustic tomography (PAT), two coupled-physics process, used for combining the high resolution of ultrasound and the high contrast capabilities of electromagnetic waves, which can be formulated as the simultaneous determination of the wave speed and the initial pressure of a wave equation (see e.g. \cite{CLAB,KRK,LU,SU}).

\subsection{Known results}

Inverse problems for equations of the form \eqref{eq1} have received many attention these last decades. Many authors considered inverse coefficients, inverse source and inverse obstacle problems for  \eqref{eq1} when $\alpha=1$.  Without being exhaustive, we  mention the works of \cite{BK,CK1,CaK,CKY,Cho,ChK,ChY,EH,IK1,IK2,Katchalov2004,KR2}.  Contrary to $\alpha=1$, inverse problems associated with \eqref{eq1}  for $\alpha\in(0,1) \cup (1,2)$ has received more recent treatment. Most of these results correspond to inverse source problems (see e.g. \cite{FK,JLLY,KSXY,KY2,KJ}). For inverse coefficients problems, many  results have been stated with infinitely many measurements (see for instance \cite{KOSY,KSY,LIY}) among which the most general and precise results seem to be the ones stated in \cite{KOSY} where the measurements are restricted to a fixed time on a portion of the boundary of the domain. Several works have also been devoted to the recovery of coefficients form data given by final overdetermination (see e.g. \cite{KR1,KJ1}). To the best of our knowledge the works \cite{HLYZ,KLLY,KY2} are the only works in the mathematical literature where the recovery of coefficients appearing in fractional diffusion equations (in dimension higher than $2$) has been stated with  a single measurement which does not correspond to final overdetermination. Among these three works, \cite{KLLY} is the only  one with results stated with a single boundary measurement on a general bounded domain. Indeed, the result of \cite{HLYZ} is stated with internal measurement while the approach of \cite{KY2} is restricted to cylindrical domain $\Omega$. The approach of \cite{KLLY} is based on a generalization of the approach of \cite{AS,CY} (see also the work of \cite{E} for similar approach in the one dimensional case) based on the construction of a suitable Dirichlet input. Indeed, not only \cite{KLLY} extends the work of \cite{AS,CY} to fractional diffusion equations ($\alpha\neq1$) but it also extends the work of \cite{AS,CY} for $\alpha=1$ in terms of generality and precision. The main idea of \cite{KLLY} is to recover boundary data for a family of elliptic equations from a single boundary measurement of the solution of \eqref{eq1}, with $F=u_0\equiv0$ and $\omega=\emptyset$, and to combine this result with the works \cite{CK2,KKL,Katchalov2004,KKLO,LO,Po,Sa} in order to prove the recovery of coefficients appearing in \eqref{eq1}.

Let us observe that while, as mentioned above, several works have been devoted to the determination of space dependent coefficients or source terms, to the best of our knowledge, even for $\alpha=1$, there is no  result devoted to the simultaneous determination of space dependent internal source and coefficients appearing in problem \eqref{eq1} from single measurement. In the same way, we are not aware of any result devoted to the simultaneous determination of an obstacle and a coefficient or a source term appearing in \eqref{eq1} from a single measurement. Indeed, we have only find works  devoted to the simultaneous determination of source and coefficient, appearing in a parabolic equation, that depend only on the time variable (see e.g. \cite{Ka1,Ka2}). In the same way, we are only aware of the work of \cite{HKZ} for the simultaneous determination of a source term and an obstacle appearing in a hyperbolic equation.

\section{Statement of the main results}

Following \cite{AS,KLLY}, we start by introducing a suitable class of inputs $\Phi$. More precisely, we consider  $\chi\in \mathcal C^\infty(\partial\tilde{\Omega})$ such that supp$(\chi)\subset\Gamma_{in}$ and $\chi=1$ on $\Gamma_{in,*}$ an open subset of $\partial\tilde{\Omega}$. We fix $\tau_1,\tau_2\in (0,T]$, $\tau_1<\tau_2$, and a strictly increasing sequence $(t_k)_{k\geq 0}$ such that  $t_0=\tau_1$ and 
$\underset{k\to\infty}{\lim}t_k=\tau_2$. We fix also the sequence  $(c_k)_{k\geq 0}$ of $[0,+\infty)$ and we define  the sequence $(\psi_k)_{k\geq 1}$ of functions non-uniformly vanishing and lying in $\mathcal C^\infty(\R;[0,+\infty))$ defined, for all $k\in\mathbb N:=\{1,2,\ldots\}$, by
$$\psi_k(t)=\left\{\begin{aligned} 0\textrm{ \ for } t\in(-\infty,t_{2k-2}],\\ c_k\textrm{ \ for } t\in[t_{2k-1},+\infty).\end{aligned}\right.$$
We set the sequence  $(d_k)_{k\geq 1}$ of $(0,+\infty)$ such that 
$$\sum_{k=1}^\infty d_k\norm{\psi_k}_{W^{3,\infty}(\R_+)}<\infty.$$
In addition, we consider the sequence $(\eta_k)_{k\geq 1}$ of $H^{\frac{3}{2}}(\partial\tilde{\Omega})$ such that Span$(\{\eta_k:\ k\geq 1\})$ is dense in $H^{\frac{3}{2}}(\partial\tilde{\Omega})$ and $\norm{\eta_k}_{H^{\frac{3}{2}}(\partial\tilde{\Omega})}=1$, $k\in\mathbb N$. Finally, we define the input $\Phi\in \mathcal C^3([0,+\infty);H^{\frac{3}{2}}(\partial\tilde{\Omega}))$ as follows
\bel{g}\Phi(t,x):=\sum_{k=1}^\infty d_k\psi_k(t)\chi(x) \eta_k(x),\quad x\in\partial\tilde{\Omega},\ t\in[0,+\infty).\ee
It is  clear that supp$(\Phi)\subset[0,+\infty)\times\Gamma_{in}$.

Let us observe that according to \cite[Section 2.2]{KLLY} one can not expect more than the recovery of two coefficients among the set $\{a,\rho,B,q\}$. In the same way, following \cite[Section 1.3]{KSXY}, it is impossible to determine general time-dependent source terms from any kind of boundary measurements of the solution of \eqref{eq1}. For this purpose, in addition to the  two coefficients among the set $\{a,\rho,B,q\}$, we consider the recovery of the obstacle $\omega$, the order of derivation $\alpha$ and source terms of the form $F(t,x)=\sigma(t)f(x)$, with $\sigma$ a known function, and the recovery  of the initial condition $u_0$. 

For our  first main result, we consider this problem for $B\equiv0$ and a single boundary measurement  given by $a\partial_\nu u_{|(0,\tau_2)\times\Gamma_{out}}$, with $\nu$ the outward unit normal vector to $\partial\tilde{\Omega}$. This result can be stated as follows.

\begin{thm}
\label{t1} 
For $j=1,2$, let $\alpha_j\in(0,2)$, and let  the conditions
\bel{t1a}\Gamma_{in,*}\cup\Gamma_{out}=\partial\tilde{\Omega},\quad \Gamma_{in,*}\cap\Gamma_{out}\neq\emptyset,\ee
be fulfilled. We fix $\omega_j$, $j=1,2$, two open set of $\R^d$ with $\mathcal C^2$ boundary such that $\overline{\omega_j}\subset \tilde{\Omega}$ and such that  $\Omega_j=\tilde{\Omega}\setminus \overline{\omega_j}$  is connected. For $j=1,2$, we fix $(a_j,\rho_j,q_j)\in\mathcal C^1(\overline{\Omega_j})\times L^\infty(\Omega_j)\times L^\infty(\Omega_j)$ fulfilling \eqref{ell}-\eqref{eq-rho}, with $\Omega=\Omega_j$, and  we assume that either of the three following conditions:
$$(i)\ \rho_1=\rho_2\textrm{ on }\Omega_1\cap\Omega_2,\quad (ii)\ a_1=a_2\textrm{ on }\Omega_1\cap\Omega_2,\quad (iii)\ q_1=q_2\textrm{ on }\Omega_1\cap\Omega_2$$
and the conditions
\bel{t1b}
\nabla a_1(x)=\nabla a_2(x),\ x \in \partial\tilde{\Omega},
\ee
\bel{t1c}
\exists C>0,\ | \rho_1(x)-\rho_2(x) | \leq C \mathrm{dist}(x,\partial\tilde{\Omega})^2,\ x \in \Omega_1\cap\Omega_2,
\ee
are fulfilled. Moreover, for $j=1,2$, we fix   $u_0^j\in L^2(\Omega_j)$ and, for $\sigma\in L^1(0,T)$, $f_j\in L^2(\Omega_j)$, we define
\bel{source} F_j(t,x)=\sigma(t)f_j(x),\quad t\in(0,T),\ x\in\Omega_j.\ee
Here, we assume that the condition
\bel{source2} \textrm{supp}(\sigma)\subset [0,\tau_1)\ee
is fulfilled and we assume that the internal sources $u_0^j$, $f_j$, $j=1,2$, satisfy one of the following conditions 
$$(iv)\ f_1=f_2\textrm{ on }\Omega_1\cap\Omega_2,\quad (v)\ \sigma\not\equiv0,\ u_0^1=u_0^2\textrm{ on }\Omega_1\cap\Omega_2,$$
$$(vi)\ \sigma\not\equiv0,\ \textrm{there exists }\tau_0\in(0,\tau_1)\textrm{ such that supp}(\sigma)\subset(\tau_0,\tau_1).$$
Furthermore, we assume that  the expressions $\eta_1$ and $c_1$, appearing in the construction of the Dirichlet input $\Phi$ given by \eqref{g}, are such that $c_1=0$ and $\eta_1$ is a  function of constant sign lying in $W^{2-\frac{1}{r},r}(\partial\tilde{\Omega})$, for some $r>\frac{d}{2}$, and it satisfies $\chi\eta_1\not\equiv0$.  Finally, we assume that there exists a connected open subset $\tilde{O}$  of $\tilde{\Omega}\setminus (\overline{\omega_1\cup\omega_2})$ (see Figure \ref{fig1}) satisfying
\bel{ob1} \partial(\omega_1\cup\omega_2)\subset\partial\tilde{O},\quad \textrm{the interior of $\partial\tilde{O}\cap\Gamma_{out}$ is not empty},\ee
\bel{ob2} a_1(x)=a_2(x),\quad \rho_1(x)=\rho_2(x),\quad q_1(x)=q_2(x),\quad x\in\tilde{O}.\ee
Consider  $u^j$, $j=1,2$, the solution of \eqref{eq1} with $\Phi$ given by \eqref{g}, $\alpha=\alpha_j$, $\omega=\omega_j$, $B=0$, $(a,\rho,q)=(a_j,\rho_j,q_j)$ and $(u_0,F)=(u_0^j,F_j)$. Then the condition
\bel{t1d}a_1(x)\partial_\nu u^1(t,x)=a_2(x)\partial_\nu u^2(t,x),\quad (t,x)\in(0,\tau_2)\times \Gamma_{out}\ee
implies that \bel{t1aaa}\alpha_1=\alpha_2,\quad \omega_1=\omega_2,\quad a_1=a_2,\quad \rho_1=\rho_2,\quad q_1=q_2,\quad u_0^1=u_0^2,\quad f_1=f_2.\ee
\end{thm}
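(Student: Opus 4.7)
The proof will follow the strategy of \cite{KLLY}: apply the Laplace transform in time to reduce \eqref{eq1} to a family of elliptic inverse problems indexed by the Laplace parameter $p$, and then use partial-data uniqueness for these elliptic problems. What is new relative to \cite{KLLY} is the need to simultaneously recover the obstacle $\omega$ and the internal sources $(f,u_0)$, which forces one to use the full structure of the input $\Phi$ in \eqref{g}, in particular the first mode $(\psi_1,\eta_1)$ with $c_1=0$ and $\eta_1$ of constant sign.

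\textbf{Laplace-transform step.} Since each $\psi_k$ saturates to $c_k$ on $[t_{2k-1},+\infty)$, the input $\Phi$ is time-independent on $(\tau_2,+\infty)$, while $\sigma\equiv 0$ there. Classical parabolic regularity (for $\alpha=1$) and the Mittag--Leffler representation of the semigroup (for $\alpha\in(0,1)\cup(1,2)$) then imply that $u^j(t,\cdot)$ is real-analytic in $t$ on $(\tau_2,+\infty)$. Combined with \eqref{t1d} this propagates the Neumann-trace matching to $(0,+\infty)\times\Gamma_{out}$, and for $\re p$ sufficiently large the Laplace transform $\hat u^j(p,\cdot)=\int_0^\infty e^{-pt}u^j(t,\cdot)\,\dd t$ satisfies
\begin{equation*}
-\div(a_j\nabla \hat u^j)+(q_j+\rho_j p^{\alpha_j})\hat u^j=\hat\sigma(p)f_j+\rho_j p^{\alpha_j-1}u_0^j\text{ in }\Omega_j,\ \hat u^j=\hat\Phi(p,\cdot)\text{ on }\partial\tilde\Omega,\ \hat u^j=0\text{ on }\partial\omega_j,
\end{equation*}
together with the matched Neumann trace $a_1\partial_\nu\hat u^1=a_2\partial_\nu\hat u^2$ on $\Gamma_{out}$.

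\textbf{Recovery of $\omega$ and $\alpha$.} Decompose $u^j=v^j+w^j$, where $v^j$ is the solution with zero Dirichlet data (driven by $(u_0^j,F_j)$) and $w^j$ is the solution with zero initial data and source (driven by $\Phi$). On $(0,\tau_1)$ one has $\Phi=0$, so $u^j=v^j$ there, and the restricted measurement isolates the Neumann traces of $v^j$ on $\Gamma_{out}$; applying the Laplace-transform argument to $v^j$ and to $w^j$ separately, using the coefficient matching \eqref{ob2} in $\tilde O$ and one of (iv)--(vi) to match source terms, the unique continuation for the elliptic system in $\tilde O$ (with Cauchy data matching on the non-empty interior of $\partial\tilde O\cap\Gamma_{out}$ given by \eqref{ob1}) yields $v^1=v^2$ and $w^1=w^2$ on $\tilde O$. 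Since $c_1=0$ and $\psi_k\equiv 0$ on $[\tau_1,t_1]$ for $k\geq 2$ (as $t_1<t_2$), the function $w^j$ is driven on $[\tau_1,t_1]$ only by the non-negative pulse $d_1\psi_1(t)\chi(x)\eta_1(x)$; a strong maximum principle (classical for $\alpha=1$ and available for fractional $\alpha$) forces $w^j>0$ in a neighborhood of $\partial\omega_j$. A hypothetical $x_0\in\partial\omega_1\setminus\overline{\omega_2}$ would lie in $\partial(\omega_1\cup\omega_2)\subset\partial\tilde O$ by \eqref{ob1}, satisfy $w^1(t,x_0)=0$ (obstacle boundary condition) and $w^2(t,x_0)>0$, contradicting $w^1=w^2$ on $\tilde O$; hence $\omega_1=\omega_2=:\omega$. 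The equality $\alpha_1=\alpha_2$ then follows from comparing the asymptotic behavior of $\hat u^j$ as $p\to+\infty$, since $\rho_j p^{\alpha_j}$ is the leading coefficient of the absorption term.

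\textbf{Recovery of $(a,\rho,q)$ and of $(f,u_0)$.} Both elliptic problems now live on the common domain $\Omega$ with the same order $\alpha$. Varying $p$ and using the density of $\textrm{Span}\{\eta_k:k\geq 1\}$ in $H^{\frac{3}{2}}(\partial\tilde\Omega)$, the Neumann-trace matching on $\Gamma_{out}$ produces a partial Dirichlet-to-Neumann map, with Dirichlet inputs supported in $\Gamma_{in,*}$, for the elliptic operator $-\div(a_j\nabla\,\cdot)+(q_j+\rho_j p^\alpha)$. Under \eqref{t1a}--\eqref{t1c} and one of (i)--(iii), the partial-data Calder\'on-type uniqueness results cited in the introduction yield $a_1=a_2$, $\rho_1=\rho_2$, $q_1=q_2$. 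With coefficients and obstacle known, the source identity satisfied by $\hat u^1-\hat u^2$, together with the $p$-analyticity of $\hat\sigma(p)$ and of $p^{\alpha-1}$, separates the contributions of $f_j$ and $u_0^j$ under each of (iv)--(vi), giving $f_1=f_2$ and $u_0^1=u_0^2$. The main obstacle is the Laplace-transform step: for $\alpha\in(0,1)\cup(1,2)$, justifying the time-analyticity of $u^j$ on $(\tau_2,+\infty)$ and the propagation of \eqref{t1d} to $(0,+\infty)\times\Gamma_{out}$ requires a careful Mittag--Leffler analysis that exploits both the time-constancy of $\Phi$ past $\tau_2$ and the vanishing of the source past $\tau_1$; a secondary delicate point is ensuring that the maximum-principle-based obstacle argument remains valid in the fractional regime.
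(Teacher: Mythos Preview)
Your overall strategy—Laplace transform, elliptic reduction, then partial-data uniqueness—matches the paper's, but the obstacle/order step contains a genuine circularity that the paper handles by a device you do not mention. At the Laplace level the zeroth-order coefficient of the elliptic equation for $\hat w^j$ is $q_j+\rho_j p^{\alpha_j}$; condition \eqref{ob2} matches $q_j$ and $\rho_j$ on $\tilde O$, but \emph{not} $p^{\alpha_j}$, since $\alpha_1=\alpha_2$ has not yet been established. Hence $\hat w^1-\hat w^2$ does not satisfy a homogeneous elliptic equation in $\tilde O$, and unique continuation from the Cauchy data on $\partial\tilde O\cap\Gamma_{out}$ fails. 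Your time-domain maximum-principle variant inherits the same problem, because the evolution operators themselves differ through $\alpha_j$. The paper breaks the circularity by evaluating the Laplace transform at $p=1$, where $p^{\alpha_1}=p^{\alpha_2}=1$ regardless of the orders, and runs the entire obstacle argument at that single frequency (no fractional maximum principle is needed: one shows $V_1^1(1,\cdot)\in H^1_0(\omega_2\setminus\overline{\omega_1})$ is annihilated by a positive-definite elliptic operator, hence vanishes, and then reaches a contradiction via elliptic unique continuation).

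Your recovery of $\alpha$ from $p\to+\infty$ asymptotics is also too optimistic: the leading behaviour of $\hat u^j$ involves the still-unknown $a_j,\rho_j,q_j$, and separating the $\alpha_j$-dependence from the coefficient dependence is not immediate. The paper instead isolates the single mode $v_1^j$ (this is why $c_1=0$, so that $\psi_1$ is compactly supported), derives via Mittag--Leffler asymptotics the expansion $\partial_\nu v_1^j(t,\cdot)=-\Gamma(-\alpha_j)^{-1}t^{-1-\alpha_j}\bigl(\int\psi_1\bigr)\partial_\nu w_j+O(t^{-1-2\alpha_j})$, and then uses the constant sign of $\eta_1$ together with the Hopf lemma to force $\partial_\nu w_j>0$ on $\Gamma_{out}$, so that the powers $t^{-1-\alpha_j}$ can be compared directly. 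This is precisely where the hypotheses ``$c_1=0$'' and ``$\eta_1$ of constant sign'' are consumed.

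Two smaller remarks. The analyticity claim for the full $u^j$ on $(\tau_2,+\infty)$ would require controlling the infinite sum $\sum_k v_k^j$ as holomorphic functions; the paper avoids this by an iterative peeling argument (Step~1) that extracts each $v_k^j$ on a finite time window and extends it individually. And for the coefficient step the paper does not build a partial DN map: it passes through boundary spectral data on $\partial\tilde\Omega$ (extended to $\partial\omega$ via unique continuation in $\tilde O$) and invokes the Canuto--Kavian inverse spectral theorem, which is where conditions \eqref{t1a}, \eqref{t1b}, \eqref{t1c} and (i)--(iii) are actually used.
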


\begin{figure}[!ht]
  \centering
  \includegraphics[width=0.4\textwidth]{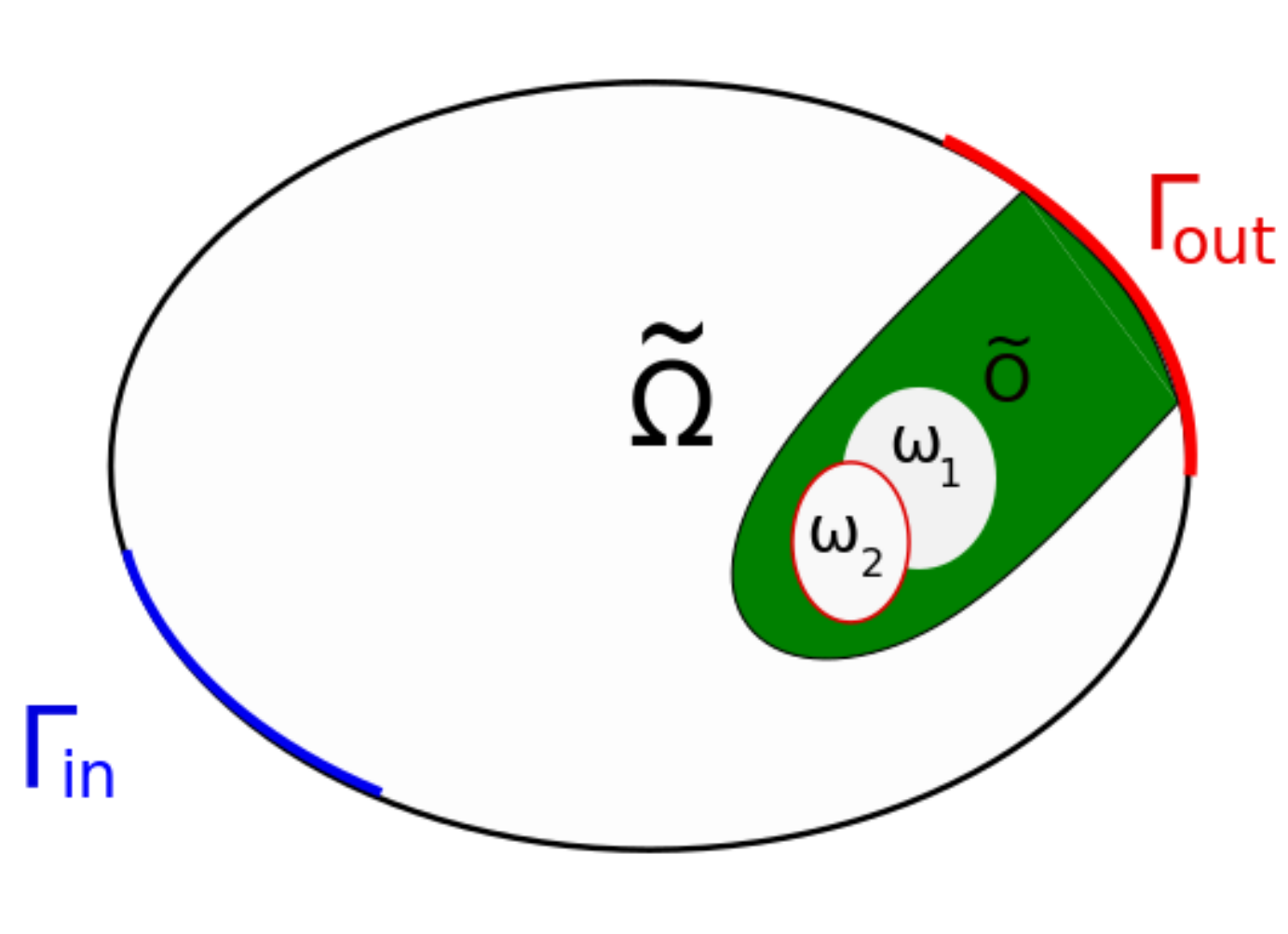}
  \caption{The sets $\tilde{\Omega}$, $\omega_1$, $\omega_2$ and $\tilde{O}$.  }
  \label{fig1}
\end{figure}

In the case $\alpha\in(0,2)\setminus\{1\}$, by considering some additional regularity assumptions, we can  extend the result of Theorem \ref{t1} into a result with a single measurement restricted to any time interval of the form $(T_0-\delta,T_0)$, with $T_0\in[\tau_2,T]$ and $\delta\in(0,T_0-\tau_1)$ arbitrary chosen. Namely, let us consider that $\tilde{\Omega}$ and $\omega$ are  $\mathcal C^4$, $\rho,a\in \mathcal C^3(\overline{\Omega})$, $q\in W^{2,\infty}(\Omega)$, $f\in H^2(\Omega)$, $u_0\in H^{\lceil\alpha\rceil}_0(\Omega)\cap H^{2\lceil\alpha\rceil}(\Omega)$, where  $\lceil\cdot\rceil$ denotes the ceiling function and $H^k_0(\Omega)$, $k\in\mathbb N$, denotes the closure of $\mathcal C^\infty_0(\Omega)$ in $H^k(\Omega)$.
Let also  $\sigma\in W^{3,1}(0,T)$ be such that $\sigma(0)=\sigma'(0)=\sigma^{(2)}(0)=0$. We suppose that  the Dirichlet input $\Phi$, given by \eqref{g}, is defined with $(\eta_k)_{k\geq 1}$ a sequence of functions of $H^{\frac{7}{2}}(\partial\tilde{\Omega})$ such that Span$(\{\eta_k:\ k\geq 1\})$ is dense in $H^{\frac{3}{2}}(\partial\tilde{\Omega})$ and $\norm{\eta_k}_{H^{\frac{7}{2}}(\partial\tilde{\Omega})}=1$, $k\in\mathbb N$. We recall that with this choice of the functions  $(\eta_k)_{k\geq 1}$, we have  $\Phi\in \mathcal C^3([0,T];H^{\frac{7}{2}}(\partial\tilde{\Omega}))$ with
$$\Phi(0,x)=\partial_t\Phi(0,x)=\partial_t^2\Phi(0,x)=0,\quad x\in\partial\tilde{\Omega}.$$
Combining \cite[Proposition 2.6, 2.8]{KY3} with \cite[Theorem 2.5, 2.9]{KY3}, one can check that problem \eqref{eq1} admits a unique solution $u\in  W^{\lceil\alpha\rceil,1}(0,T;H^{\frac{7}{4}}(\Omega))\cap L^1(0,T;H^{2+\frac{7}{4}}(\Omega))$. In particular we have $\partial_\nu u\in W^{\lceil\alpha\rceil,1}(0,T;L^2(\partial\tilde{\Omega}))$ and $\partial_\nu \textrm{div}(a\nabla u)\in L^1(0,T;L^2(\partial\tilde{\Omega}))$. Under these smoothness assumptions, we can prove that the result of Theorem \ref{t1}, with $\alpha$ known, remains valid with measurement given by $\partial_\nu u$  and $\partial_\nu \textrm{div}(a\nabla u)$ restricted to $(T_0-\delta,T_0)\times\Gamma_{out}$, with $T_0\in[\tau_2,T]$ and $\delta\in(0,T_0-\tau_1)$ arbitrary chosen. Our result for this problem can be stated as follows.

\begin{thm}
\label{tt2} 
Let $\alpha_1=\alpha_2=\alpha\in(0,2)\setminus\{1\}$  and let  the conditions of Theorem \ref{t1} be fulfilled.
 We assume also that, for $j=1,2$,   $\rho_j,a_j\in \mathcal C^3(\overline{\Omega_j})$, $q_j\in W^{2,\infty}(\Omega_j)$, $u_0^j\in H^{\lceil\alpha\rceil}_0(\Omega_j)\cap H^{2\lceil\alpha\rceil}(\Omega_j)$, $f_j\in H^2(\Omega_j)$, satisfy the condition of Theorem \ref{t1} as well as the following conditions
\bel{t6d}\partial_\nu \textrm{\emph{div}}(a_1(x)\nabla h)(x)=\partial_\nu \textrm{\emph{div}}(a_2(x)\nabla h)(x),\quad  h\in H^4(\tilde{\Omega}),\ x\in\Gamma_{out},\ee
\bel{t6e}  a_1(x)=a_2(x),\quad \partial_\nu^k \rho_1(x)=\partial_\nu^k \rho_2(x),\quad \partial_\nu^k q_1(x)=\partial_\nu^k q_2(x),\quad k=0,1,\  x\in\Gamma_{out}.\ee
Moreover, we assume that $\tilde{\Omega}$ and $\omega_j$, $j=1,2$,  are $\mathcal C^4$, $\sigma\in W^{3,1}(0,T)$, with $\sigma(0)=\sigma'(0)=\sigma^{(2)}(0)=0$,  and the Dirichlet input \eqref{g} is defined with $(\eta_k)_{k\geq 1}$ a sequence of functions lying in $H^{\frac{7}{2}}(\partial\tilde{\Omega})$ such that Span$(\{\eta_k:\ k\geq 1\})$ is dense in $H^{\frac{3}{2}}(\partial\tilde{\Omega})$ and $\norm{\eta_k}_{H^{\frac{7}{2}}(\partial\tilde{\Omega})}=1$, $k\in\mathbb N$.
 Consider  $u^j$, $j=1,2$, the solution of \eqref{eq1} with $\Phi$ given by \eqref{g}, $B=0$, $(a,\rho,q)=(a_j,\rho_j,q_j)$ and $(u_0,F)=(u_0^j,F_j)$. Then, for any arbitrary chosen $T_0\in[\tau_2,T]$ and $\delta\in(0,T_0-\tau_1)$, the condition
\bel{t6a}\left\{\begin{aligned}&\partial_\nu u^1(t,x)=\partial_\nu u^2(t,x),\\ &\partial_\nu \textrm{\emph{div}}\left(a_1(x) \nabla_x u^1\right) (t,x)=\partial_\nu \textrm{\emph{div}}\left(a_1(x) \nabla_x u^2\right) (t,x),\end{aligned}\right. \quad (t,x)\in (T_0-\delta,T_0)\times\Gamma_{out}\ee
implies that \eqref{t1aaa} holds true.
\end{thm}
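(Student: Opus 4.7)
The plan is to reduce Theorem~\ref{tt2} to Theorem~\ref{t1} by showing that the partial measurement~\eqref{t6a}, together with the boundary matching conditions~\eqref{t6d}--\eqref{t6e}, forces the full Neumann identity $a_1 \partial_\nu u^1 = a_2 \partial_\nu u^2$ on $(0,\tau_2) \times \Gamma_{out}$, after which Theorem~\ref{t1} (with $\alpha_1 = \alpha_2 = \alpha$ already known) delivers~\eqref{t1aaa}. The argument proceeds in two stages: an algebraic manipulation on $\Gamma_{out}$ extracting an extra identity on the Caputo derivative of $v := \partial_\nu u^1 - \partial_\nu u^2$, and a backward-in-time unique continuation of Abel-integral type that crucially exploits $\alpha \in (0,2) \setminus \{1\}$.

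\emph{Step 1 (pointwise identity on $\Gamma_{out}$).} Applying $\partial_\nu$ to~\eqref{eq1} on $\Gamma_{out}$ and subtracting for $j = 1,2$, I would use that $u^1 = u^2 = \Phi$ on $\partial\tilde{\Omega}$ (hence $\partial_t^\alpha u^j|_{\partial\tilde{\Omega}} = \partial_t^\alpha \Phi$) and that $F_j \equiv 0$ on $(T_0-\delta,T_0) \times \Omega_j$ (since $\mathrm{supp}(\sigma) \subset [0,\tau_1) \subset [0, T_0 - \delta)$). Combined with~\eqref{t6e}, this cancels the boundary contributions stemming from $\partial_\nu\rho_j$, $\partial_\nu q_j$ and the source, leaving
\[
\partial_\nu \mathrm{div}(a_1 \nabla u^1) - \partial_\nu \mathrm{div}(a_2 \nabla u^2) = \rho_1\, \partial_t^\alpha v + q_1\, v \quad \text{on } (T_0 - \delta, T_0) \times \Gamma_{out}.
\]
Applying~\eqref{t6d} to $u^2(t, \cdot) \in H^4(\tilde{\Omega})$ rewrites the left-hand side as $\partial_\nu \mathrm{div}(a_1 \nabla(u^1 - u^2))$, which vanishes by the second line of~\eqref{t6a}, while the $q_1 v$ term vanishes by the first line. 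Since $\rho_1 \geq \rho_0 > 0$, it follows that, in addition to $v = 0$, one has $\partial_t^\alpha v(t, x) = 0$ on $(T_0 - \delta, T_0) \times \Gamma_{out}$.

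\emph{Step 2 (backward propagation via Abel inversion).} Fix $x \in \Gamma_{out}$. Under the enhanced regularity, $v(\cdot, x) \in W^{\lceil\alpha\rceil, 1}(0, T)$; because $v \equiv 0$ on $(T_0 - \delta, T_0)$, its weak derivatives up to order $\lceil\alpha\rceil = [\alpha] + 1$ vanish there. Definition~\eqref{cap} then reduces the identity $\partial_t^\alpha v(\cdot, x) \equiv 0$ on $(T_0 - \delta, T_0)$ to
\[
\int_0^{T_0 - \delta} (t - s)^{[\alpha] - \alpha}\, \partial_s^{\lceil\alpha\rceil} v(s, x)\, ds = 0, \qquad t \in (T_0 - \delta, T_0).
\]
The kernel being real-analytic in $t$ for $t > s$, this identity extends to all $t > T_0 - \delta$. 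Expanding $(t - s)^{[\alpha] - \alpha}$ as a binomial series about $t = +\infty$, the distinct asymptotic scales $t^{[\alpha] - \alpha - k}$ (all non-integer since $\alpha \notin \mathbb Z$) are linearly independent, forcing the moments $\int_0^{T_0-\delta} s^k\, \partial_s^{\lceil\alpha\rceil} v(s, x)\, ds = 0$ for every $k \geq 0$; density of polynomials in $C([0, T_0-\delta])$ yields $\partial_s^{\lceil\alpha\rceil} v(\cdot, x) \equiv 0$ on $(0, T_0 - \delta)$. Integrating $\lceil\alpha\rceil$ times and using continuity of $v, v', \ldots, v^{([\alpha])}$ at $T_0 - \delta$ (all zero there, inherited from the measurement interval via the embedding $W^{\lceil\alpha\rceil, 1}(0, T) \hookrightarrow C^{[\alpha]}([0, T])$), I would conclude $v(\cdot, x) \equiv 0$ on $(0, T_0) \supseteq (0, \tau_2)$.

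\emph{Conclusion and main obstacle.} Combining Steps 1 and 2 with $a_1 = a_2$ on $\Gamma_{out}$ from~\eqref{t6e} yields $a_1 \partial_\nu u^1 = a_2 \partial_\nu u^2$ on $(0, \tau_2) \times \Gamma_{out}$, which is exactly hypothesis~\eqref{t1d} of Theorem~\ref{t1}, and~\eqref{t1aaa} follows. The decisive obstacle is the asymptotic/moment inversion in Step~2: the exclusion of $\alpha = 1$ is essential because for $\alpha = 1$ the kernel exponent $[\alpha] - \alpha$ is an integer, distinct asymptotic scales collapse, and the inversion breaks down. A secondary technical point is the rigorous justification of all pointwise-in-$x$ manipulations on $\Gamma_{out}$ in Step~1 (in particular the commutation of $\partial_\nu$ with $\partial_t^\alpha$), for which the enhanced smoothness assumptions on $\tilde{\Omega}$, $\omega_j$, the coefficients, the internal sources, and the input $\Phi$ imposed in Theorem~\ref{tt2} are tailored.
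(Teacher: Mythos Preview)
Your overall strategy matches the paper's: both first extract from~\eqref{t6a} and~\eqref{t6d}--\eqref{t6e} the twin identities $\partial_\nu u=\partial_t^{\alpha}\partial_\nu u=0$ on $(T_0-\delta,T_0)\times\Gamma_{out}$ (your Step~1 is essentially the paper's computation, just organized as ``subtract the two equations'' rather than ``write the equation for $u=u^1-u^2$''), then propagate $\partial_\nu u=0$ backward to all of $(0,T_0)\supset(0,\tau_2)$ (Step~2), and finally invoke Theorem~\ref{t1}. The genuine difference is Step~2: the paper simply cites \cite[Theorem~1]{KJ} as a black box for the implication $\{v=\partial_t^{\alpha}v=0$ on $(T_0-\delta,T_0)\}\Rightarrow\{v=0$ on $(0,T_0)\}$, whereas you give a self-contained argument via analytic continuation of the Abel integral in $t$, binomial expansion at $t\to\infty$, and moment inversion. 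Your argument is correct and more elementary, and it makes the role of $\alpha\neq1$ explicit (for $\alpha=1$ the exponent $[\alpha]-\alpha=0$ forces $\binom{0}{k}=0$ for $k\geq1$, so only the zeroth moment is captured and the inversion fails). Two minor technical points worth tightening: (i)~working pointwise in $x\in\Gamma_{out}$ is formally delicate since $\partial_\nu u^j$ lives only in $L^2$-type spaces in $x$; the paper pairs with $\phi\in\mathcal C^\infty_0(\Gamma_{out})$ and runs the scalar argument on $t\mapsto\langle\partial_\nu u(t,\cdot),\phi\rangle$, which your Step~2 handles verbatim; (ii)~$u^2(t,\cdot)$ is only in $H^{2+7/4}$, not $H^4(\tilde\Omega)$, so~\eqref{t6d} does not literally apply---but~\eqref{t6d} encodes local conditions on $a_1,a_2$ near $\Gamma_{out}$ and extends by density; the paper makes the same informal move.
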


For our third main result, we consider the above problem for $a=1$, $q=0$ and for $\Gamma_{in,*}=\Gamma_{out}=\partial\tilde{\Omega}$.  Our third main result can be stated as follows.

\begin{thm}
\label{t2}                                                                   
Let  $d\geq3$ and, for $j=1,2$, let $\alpha_j\in(0,1]$ $a_j=1$, $q_j=0$, $\omega_j$ be an open set of $\R^d$ with $\mathcal C^2$ boundary such that $\overline{\omega_j}\subset \tilde{\Omega}$ and such that $\Omega_j=\tilde{\Omega}\setminus \overline{\omega_j}$ is connected, $\rho_j\in \mathcal C(\overline{\Omega_j})$ satisfy \eqref{eq-rho}, with $\Omega=\Omega_j$, and let $B_j\in\mathcal C^\gamma(\overline{\Omega_j})^d$, with $\gamma\in(2/3,1)$. Moreover, for $j=1,2$, we fix   $u_0^j\in L^2(\Omega_j)$, $\sigma\in L^1(0,T)$, $f_j\in L^2(\Omega_j)$, satisfying \eqref{source2} and one of the conditions (iv), (v), (vi), and we consider $F_j$ given by \eqref{source}. We assume also that the expression $\eta_1$ appearing in the construction of the Dirichlet input $\Phi$, given by \eqref{g}, is lying in $W^{2-\frac{1}{r},r}(\partial\tilde{\Omega})$, for some $r>\frac{d}{2}$, and $\chi\eta_1\not\equiv0$. Finally, we assume that there exists a connected open subset $\tilde{O}$ of $\tilde{\Omega}\setminus (\overline{\omega_1\cup\omega_2})$ satisfying \eqref{ob1} with $\Gamma_{out}=\partial\tilde{\Omega}$ such that $(\R^3\setminus\tilde{\Omega})\cup \overline{\tilde{O}}\cup\omega_1$ is connected and the following conditions
\bel{ob3} B_1(x)=B_2(x),\quad \rho_1(x)=\rho_2(x),\quad x\in\tilde{O},\ee
\bel{ob4} \alpha_1=\alpha_2\quad \textrm{or}\quad \omega_1=\omega_2=\emptyset\ee
are fulfilled.  Consider  $u^{j}$, $j=1,2$, the solution of \eqref{eq1} with $(a,\alpha,B,\rho,q)=(a_j,\alpha_j,B_j,\rho_j,q_j)$, $(u_0,F)=(u_0^j,F_j)$ and $\Phi$ given by \eqref{g} with $\chi=1$. 
Then the condition
\bel{t2a}\partial_\nu u^{1}(t,x)=\partial_\nu u^{2}(t,x),\quad (t,x)\in(0,\tau_2)\times \partial\tilde{\Omega}\ee
implies that 
\bel{t2aa}\omega_1=\omega_2,\quad \alpha_1=\alpha_2,\quad B_1=B_2,\quad \rho_1=\rho_2,\quad u_0^1=u_0^2,\quad f_1=f_2.\ee  \end{thm}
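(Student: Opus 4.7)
The approach adapts the strategy used for Theorem~\ref{t1}, with modifications to accommodate the convection term $B$, the full‑boundary measurement, and the extra connectedness hypothesis on $\tilde O$. I would proceed in four steps.

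\emph{Step 1: reduction to a family of elliptic Cauchy data.} Exploit the structure of the Dirichlet input $\Phi$ in \eqref{g} to extract, from the single measurement \eqref{t2a}, the equality of Cauchy data on $\partial\tilde\Omega$ for a countable family of stationary elliptic problems on $\Omega_j$. Using $c_1=0$, $\mathrm{supp}(\sigma)\subset[0,\tau_1)$ and the fact that each $\psi_k$ is constant on $[t_{2k-1},t_{2k}]$, one mimics \cite[Proof of Theorem~2.1]{KLLY}: a Laplace transform of $u^j$ localised to those intervals isolates the contribution of each $\eta_k$ and yields, for each $k\in\N$ and each $\mu$ in an unbounded subset of $\C$, a problem of the form
$$
-\Delta v + B_j\cdot\nabla v + \mu\,\rho_j\,v = 0 \ \text{ in }\Omega_j,\qquad v|_{\partial\tilde\Omega}=\eta_k,\qquad v|_{\partial\omega_j}=0,
$$
and \eqref{t2a} forces the Neumann traces of the associated $v$ on $\partial\tilde\Omega$ to coincide for every $k$ and every $\mu$.

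\emph{Step 2: identification of the obstacle.} Under the first alternative in \eqref{ob4} (so $\alpha_1=\alpha_2$ is given but the $\omega_j$ are unknown), prove $\omega_1=\omega_2$. Using the connectedness hypothesis on $(\R^d\setminus\tilde\Omega)\cup\overline{\tilde O}\cup\omega_1$ together with the coefficient equalities \eqref{ob3} on $\tilde O$, propagate $v^1=v^2$ by unique continuation across $\tilde O$ for the elliptic operators of Step~1. The vanishing Dirichlet trace of each $v^j$ on $\partial\omega_j$, combined with this continuation, yields a contradiction unless $\omega_1=\omega_2$, in the spirit of the obstacle identification underlying Theorem~\ref{t1}. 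Under the second alternative of \eqref{ob4} this step is empty.

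\emph{Step 3: order and coefficient identification.} Denote by $\omega$ the common obstacle and set $\Omega=\tilde\Omega\setminus\overline{\omega}$. If $\omega=\emptyset$ (second alternative of \eqref{ob4}), recover $\alpha_1=\alpha_2$ by analysing the asymptotics as $\mu\to+\infty$ (or $\mu\to 0^+$) of the family of Cauchy data constructed in Step~1, via the Mittag--Leffler representation of $u^j$ as in \cite{KLLY,KY2}. Once $\alpha$ is identified, apply a partial‑Cauchy‑data inverse result for the second‑order elliptic operator $-\Delta+B\cdot\nabla+\mu\rho$ on $\Omega$ to conclude that $B_1=B_2$ and $\rho_1=\rho_2$: the hypotheses $d\geq 3$, $B_j\in\mathcal C^\gamma$ with $\gamma>2/3$, and the agreement of $(B_j,\rho_j)$ on $\tilde O$ from \eqref{ob3} are precisely what is required for the complex‑geometric‑optics constructions of Sylvester--Uhlmann type and their extension to drift terms (see \cite{Sa} and the references collected in \cite{KLLY}); the equality on $\tilde O$ is moreover used to fix the gauge ambiguity inherent in the recovery of the first‑order coefficient $B$.

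\emph{Step 4: source and initial condition.} With $(B,\rho,\omega,\alpha)$ now determined, set $w:=u^1-u^2$ on $(0,T)\times\Omega$. Then $w$ solves a problem of the form \eqref{eq1} with identified coefficients, vanishing lateral Dirichlet data, vanishing Neumann trace on $(0,\tau_2)\times\partial\tilde\Omega$ (by \eqref{t2a}), source $\sigma(t)(f_1-f_2)$ and initial datum $u_0^1-u_0^2$. Under any of the alternatives (iv), (v), (vi), the inverse source results of \cite{JLLY,KSXY} then yield $u_0^1=u_0^2$ and $f_1=f_2$, which together with the earlier identifications gives \eqref{t2aa}.

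The main difficulty is expected to be Step~3, namely the simultaneous recovery of the drift $B$ and the density $\rho$ from the family of elliptic Cauchy data: this is where the stronger structural hypotheses ($d\geq 3$, H\"older‑continuous $B$, agreement on $\tilde O$) are consumed, and it is this step that restricts Theorem~\ref{t2} to dimension three or higher and to a more regular convection class than in Theorem~\ref{t1}.
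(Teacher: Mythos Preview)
Your plan is essentially the paper's approach, and would succeed. Two points of execution are worth correcting.

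First, the connectedness of $(\R^d\setminus\tilde\Omega)\cup\overline{\tilde O}\cup\omega_1$ is not what drives the obstacle identification. In the paper's Step~1 one uses only that $\tilde O$ is connected together with $\partial(\omega_1\cup\omega_2)\subset\partial\tilde O$ from \eqref{ob1}: unique continuation gives $V_1^1(p,\cdot)=V_1^2(p,\cdot)$ on $\tilde O$, hence $V_1^1(p,\cdot)\in H^1_0(\omega_2\setminus\overline{\omega_1})$, and an \emph{energy estimate} for $p$ large (not a maximum principle, since the operator has a drift) forces $V_1^1(p,\cdot)\equiv 0$ there, contradicting $\chi\eta_1\not\equiv 0$. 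The extra connectedness hypothesis is consumed only later, in the coefficient step, to pin down the gauge: once the DN-map argument (the paper invokes Pohjola~\cite{Po} built on Salo~\cite{Sa}) produces $B_1-B_2=\nabla\phi$, the fact that $B_1-B_2$ vanishes on that connected set forces $\phi\in H^1_0(\Omega)$.

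Second, the recovery of $(B,\rho)$ is not a single black-box elliptic inverse result for one $\mu$. What Pohjola's analysis gives, for each $p>p_0$, is $B_1-B_2=\nabla\phi$ together with the scalar identity $|B_1|^2-\tfrac12\,\mathrm{div}\,B_1+\rho_1 p^\alpha=|B_2|^2-\tfrac12\,\mathrm{div}\,B_2+\rho_2 p^\alpha$. It is precisely the availability of this identity for \emph{all} large $p$ that disentangles the two unknowns: divide by $p^\alpha$ and send $p\to\infty$ to get $\rho_1=\rho_2$ first, and then $\phi\in H^1_0(\Omega)$ solves $-\Delta\phi+2(B_1+B_2)\cdot\nabla\phi=0$, hence $\phi\equiv 0$ and $B_1=B_2$. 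Your Step~3 should make this two-stage separation explicit. (Minor: $c_1=0$ is a hypothesis of Theorem~\ref{t1}, not of Theorem~\ref{t2}; and the time-analyticity you need in Step~1 is the non-self-adjoint version, Theorem~\ref{t5}.)
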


Let us recall that our construction of the input $\Phi$ given by \eqref{g} can also be extended to $(M,g)$ a compact connected and smooth Riemanian manifold with boundary by replacing $\partial\tilde{\Omega}$ with $\partial M$. In that case, we define  the  Laplace-Beltrami operator
\[
\Delta_{g}:=\textrm{div}_g (\nabla_g\cdot)
\]
where $\textrm{div}_g$ and $\nabla_g$ denote divergence and gradient operators on $(M,g)$ respectively, and we consider  the following problem  on the manifold $M$

\begin{equation}\label{eqM1}
\begin{cases}
\partial_t^{\alpha}u -\Delta_{g} u +q(x) u =  F, & \mbox{in }(0,T)\times M,\\
u= \Phi, & \mbox{on } (0,T)\times\partial M, \\
\begin{cases}
u=u_0 & \mbox{if }0<\alpha\leq1,\\
u=u_0,\quad \partial_t u=0 & \mbox{if }1<\alpha<2,
\end{cases} & \mbox{in }\{0\}\times M.
\end{cases}
\end{equation}
In that case, following the results of \cite{KLLY} based on the works of \cite{KKL,KKLO,KOSY,LO} we obtain the following extension of our results to Riemanian manifolds.

\begin{cor}\label{c1}
For $j=1,2$, let $\alpha_j\in(0,2)$, let $(M_j,g_j)$ be two compact and smooth connected Riemannian manifolds of dimension $d\ge2$ with the same boundary and with $g_1=g_2$ on $\partial M_1$, and let  $q_j\in C^\infty(M_j)$ satisfy  $q_j\ge0$ on $M_j$. Moreover, for $j=1,2$, we fix   $u_0^j\in L^2(M_j)$, $\sigma\in L^1(0,T)$, $f_j\in L^2(M_j)$ satisfying \eqref{source2} and one of the conditions 
$$(iv')\ f_j=0,\ j=1,2,\quad (v')\ \sigma\not\equiv 0,\  u_0^j\equiv0,\  j=1,2,$$
$$(vi')\ \sigma\not\equiv0,\ \textrm{there exists }\tau_0\in(0,\tau_1)\textrm{ such that supp}(\sigma)\subset(\tau_0,\tau_1).$$
Furthermore, we assume that  the expressions $\eta_1$ and $c_1$, appearing in the construction of the Dirichlet input $\Phi$ given by \eqref{g} with $\partial\tilde{\Omega}$ replaced by $\partial M_1$, are such that $c_1=0$ and $\eta_1$ is a  function of constant sign lying in $\mathcal C^3(\partial M_1)$ and it satisfies $\chi\eta_1\not\equiv0$. 
We consider also $F_j$ given by \eqref{source} and we fix $\Gamma_{in}=\Gamma_{out}$ an arbitrary open subset of $\partial M_1$.
Consider $u^j$, $j=1,2$,  the solution of \eqref{eqM1} with $\Phi$ given by \eqref{g}, $\alpha=\alpha_j$, $(M,g)= (M_j,g_j)$, $q=q_j$, $u_0=u_0^j$, $F=F_j$. Then the condition
\begin{equation}\label{t4d}
\partial_{\nu}u^1=\partial_{\nu}u^2\quad\mbox{in }(0,\tau_2)\times\Gamma_{out}
\end{equation}
implies that $(M_1,g_1)$ and $(M_2,g_2)$ are isometric. Moreover, \eqref{t4d} implies that there exist $\phi\in C^\infty(M_2;M_1)$, an isomtery from $(M_2,g_2)$ to $(M_1,g_1)$, fixing $\partial M_1$ and depending only on $(M_j,g_j)$, $j=1,2$, such that
\begin{equation}\label{mu_c}
 \alpha_1=\alpha_2,\quad q_2=q_1\circ\phi,\quad u_0^2=u_0^1\circ\phi,\quad  f_2=f_1\circ\phi.
\end{equation}
\end{cor}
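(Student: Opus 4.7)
The plan is to adapt the strategy of Theorem \ref{t1} to the Riemannian manifold setting, where two substantial simplifications occur: there is no obstacle ($\omega=\emptyset$), and the principal part of the operator is the Laplace--Beltrami operator, so that only the pair $(g,q)$ must be recovered. Accordingly, the argument splits into three steps: (i) extraction of a family of elliptic boundary data from the single parabolic measurement \eqref{t4d}; (ii) application of a geometric rigidity theorem for partial Dirichlet-to-Neumann maps on compact Riemannian manifolds; (iii) an inverse source identification after pulling back by the isometry obtained in (ii).

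For step (i), I would exploit the specific structure of the boundary input \eqref{g}. On any interval of the form $[t_{2k-1},t_{2k}]$, the definitions of the $\psi_j$ and the support condition \eqref{source2} force
\[
\Phi(t,\cdot)=\sum_{j=1}^k d_j c_j\,\chi(\cdot)\eta_j(\cdot),\qquad F_j(t,\cdot)=0,\qquad t\in[t_{2k-1},t_{2k}].
\]
Following the Laplace-transform approach of \cite{KLLY}, the measurement \eqref{t4d} combined with the analyticity in the spectral parameter $\lambda$ of the resolvent of $-\Delta_{g_j}+q_j$ then allows one to extract, for every $k\geq 2$, the equality
\[
\partial_\nu V_k^1(x)=\partial_\nu V_k^2(x),\qquad x\in\Gamma_{out},
\]
where $V_k^j\in H^2(M_j)$ denotes the unique solution of $(-\Delta_{g_j}+q_j)V_k^j=0$ in $M_j$ with $V_k^j=\chi\eta_k$ on $\partial M_j$. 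Simultaneously, the special first mode (with $c_1=0$ and $\eta_1$ of constant sign satisfying $\chi\eta_1\not\equiv 0$) is used to isolate the transient response generated by the localized temporal bump $\psi_1$; comparing the singular short-time behavior of the associated Neumann trace, which at leading order involves the factor $t^{-\alpha_j}/\Gamma(1-\alpha_j)$, on a nonempty subset of $\Gamma_{out}$ yields $\alpha_1=\alpha_2$.

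In step (ii), the density of $\mathrm{Span}(\{\chi\eta_k:\ k\geq 2\})$ in the space of $H^{3/2}$-Dirichlet data supported in $\overline{\Gamma_{in}}$ upgrades the pointwise-in-$k$ equality above to the equality of the partial Dirichlet-to-Neumann maps of the operators $-\Delta_{g_j}+q_j$ for inputs supported in $\Gamma_{in}$ and observations on $\Gamma_{out}=\Gamma_{in}$. Since $\Gamma_{in}$ is an arbitrary nonempty open subset of $\partial M_1$, the partial-data boundary control results of \cite{KKL,KKLO,LO} then yield the existence of an isometry $\phi\colon (M_2,g_2)\to (M_1,g_1)$ fixing $\partial M_1$ pointwise and satisfying $q_2=q_1\circ\phi$.

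For step (iii), identify the two manifolds via $\phi$ so that $w=u^1-u^2\circ\phi$ solves a homogeneous fractional diffusion equation on $(0,T)\times M_1$ with zero Dirichlet boundary data, initial condition $u_0^1-u_0^2\circ\phi^{-1}$, source $\sigma(t)[f_1(x)-f_2(\phi^{-1}(x))]$, and zero Neumann trace on $(0,\tau_2)\times\Gamma_{out}$. Each of the alternatives (iv$'$), (v$'$), (vi$'$) reduces this to a single-unknown inverse source problem, which can be resolved by the techniques of \cite{KSXY,KY2,FK,JLLY}. The main obstacle is step (i): the extraction of elliptic Neumann traces from a parabolic measurement restricted to the bounded window $(0,\tau_2)\times\Gamma_{out}$. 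The key mechanism, already used in \cite{AS,CY,KLLY}, is that the step-like temporal profiles $\psi_k$, coupled with analytic continuation in the Laplace variable, generate in the long-time regime on each interval $[t_{2k-1},t_{2k}]$ a contribution that encodes the full partial Dirichlet-to-Neumann map of the stationary elliptic problem; transferring this mechanism to the Riemannian setting amounts to replacing Euclidean differential operators by their intrinsic counterparts on $(M,g)$.
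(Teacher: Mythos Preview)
Your three-step decomposition mirrors the paper's, but two of your key mechanisms are not the ones that actually work.

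\medskip
\textbf{Recovery of $\alpha$.} Your short-time argument fails: the auxiliary solution $v_1^j$ associated with the input $d_1\psi_1\chi\eta_1$ has zero initial data and smooth boundary data that vanishes for $t\leq t_0=\tau_1$, so $v_1^j\equiv 0$ on $[0,\tau_1]$ and its Neumann trace carries no short-time singularity of the form $t^{-\alpha_j}/\Gamma(1-\alpha_j)$. The paper instead exploits the assumption $c_1=0$, which makes $\psi_1$ compactly supported, and derives the \emph{long-time} expansion
\[
\partial_\nu v_1^j(t,\cdot)=-\frac{t^{-1-\alpha_j}}{\Gamma(-\alpha_j)}\Bigl(\int_0^{+\infty}\psi_1(s)\,ds\Bigr)\partial_\nu w_j+\mathcal O\bigl(t^{-1-2\alpha_j}\bigr)\quad(t\to+\infty),
\]
with $w_j=A_j^{-1}G_j$ and $G_j$ the harmonic extension of $\chi\eta_1$. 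The constant-sign hypothesis on $\eta_1$ then enters through the strong maximum principle and Hopf lemma on $(M_j,g_j)$, which force $\partial_\nu w_j>0$ on $\partial M_j$; comparing the two expansions on $\Gamma_{out}$ yields $\alpha_1=\alpha_2$. Note that one must first use analytic continuation (Theorem \ref{t4}) to propagate \eqref{t4d} from $(0,\tau_2)$ to all of $(0,+\infty)$ before any asymptotics as $t\to+\infty$ become available.

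\medskip
\textbf{Elliptic data extracted.} What you write in step (i)--(ii) is the partial Dirichlet-to-Neumann map of the \emph{zero-frequency} operator $-\Delta_{g_j}+q_j$. That alone does not suffice to invoke \cite{KKL,KKLO,LO}, which require the full boundary spectral data (all eigenvalues together with the Neumann traces of a basis of eigenfunctions) or, equivalently, the hyperbolic DN map. The Laplace transform of $v_k^j$ at $p>0$ actually solves $(-\Delta_{g_j}+q_j+p^{\alpha})V_k^j(p,\cdot)=0$ with boundary value $\hat\psi_k(p)\chi\eta_k$, so the correct extraction (carried out in \cite{KLLY} and referenced in the paper) produces the partial DN map at \emph{every} positive frequency, and hence the boundary spectral data $\{\lambda_k^j,\ \partial_\nu\phi_{k,\ell}^j|_{\Gamma_{out}}\}$. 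Only then does \cite[Corollary 2.4]{KLLY} apply to give the isometry $\phi$ and $q_2=q_1\circ\phi$.

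\medskip
Your step (iii) is essentially correct in spirit; the paper works on $M_2$ via $\tilde v(t,x)=v_0^1(t,\phi(x))$ rather than on $M_1$, and then repeats the Laplace-transform and unique-continuation arguments of Step~5 of Theorem~\ref{t1}. (Minor: in your formulation on $M_1$ the pullback should be $u^2\circ\phi^{-1}$.)
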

In the spirit of Theorem \ref{tt2}  we can also restrict the measurement under consideration in Corollary \ref{c1} to $(T_0-\delta,T_0)\times\Gamma_{out}$. This result can be stated as follows.

\begin{cor}\label{cc1}
We assume that the conditions of Corollary \ref{c1} are fulfilled.
Let $\alpha\in(0,2)\setminus\{1\}$, let $(M,g_j)$ be two compact and smooth connected Riemannian manifolds of dimension $d\ge2$ with $g_1=g_2$ on $\partial M$, and let  $q_j\in C^\infty(M)$ satisfy  $q_j\ge0$ on $M$. Moreover, for $j=1,2$, we assume that   $u_0^j \in H^{\lceil\alpha\rceil}_0(M)\cap H^{2\lceil\alpha\rceil}(M)$, $f_j\in H^2(M)$ and  $\sigma\in W^{3,1}(0,T)$ satisfies $\sigma(0)=\sigma'(0)=\sigma^{(2)}(0)=0$.
 We consider also $F_j$ given by \eqref{source} and  the Dirichlet input \eqref{g} is defined with $(\eta_k)_{k\geq 1}$ a sequence of functions of $H^{\frac{7}{2}}(\partial M)$ such that Span$(\{\eta_k:\ k\geq 1\})$ is dense in $H^{\frac{3}{2}}(\partial M)$ and $\norm{\eta_k}_{H^{\frac{7}{2}}(\partial M)}=1$, $k\in\mathbb N$. Assume also that $\Gamma_{in}=\Gamma_{out}$ is arbitrary chosen and the conditions 
\bel{cc1c}\partial_\nu\Delta_{g_1}h(x)=\partial_\nu\Delta_{g_2}h(x),\quad x\in\Gamma_{out},\ h\in H^4(M),\ee
\bel{cc1d}\partial_\nu^k c^1(x)=\partial_\nu^k c^2(x),\quad k=0,1,\  x\in\Gamma_{out}\ee
are fulfilled. Consider $u^j$, $j=1,2$,  the solution of \eqref{eqM1}   and $(g,q,u_0,F)=(g_j,q_j, u_0^j,F_j)$. Then, for any arbitrary chosen $T_0\in[\tau_2,T]$ and $\delta\in(0,T_0-\tau_1)$, the conditions
\bel{cc1a}
\partial_{\nu}u^1=\partial_{\nu}u^2\quad\mbox{in }(T_0-\delta,T_0)\times\Gamma_{out}
\ee
\bel{cc1b}
\partial_{\nu}\Delta_{g_1}u^1=\partial_{\nu}\Delta_{g_1}u^2\quad\mbox{in }(T_0-\delta,T_0)\times\Gamma_{out}\ee
imply that $(M,g_1)$ and $(M,g_2)$ are isometric and \eqref{mu_c} is fulfilled with $M_1=M_2=M$.
\end{cor}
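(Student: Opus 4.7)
The strategy is to reduce Corollary~\ref{cc1} to Corollary~\ref{c1} by upgrading the short-time measurements \eqref{cc1a}--\eqref{cc1b} on $(T_0-\delta,T_0)\times\Gamma_{out}$ to the full equality $\partial_\nu u^1=\partial_\nu u^2$ on $(0,\tau_2)\times\Gamma_{out}$, then invoking Corollary~\ref{c1} to obtain the isometry of $(M,g_1)$ and $(M,g_2)$ together with \eqref{mu_c}. The exclusion $\alpha\neq 1$ is essential: the argument is powered by time-analytic continuation of the fractional diffusion, which is unavailable in the classical parabolic case, exactly as in the passage from Theorem~\ref{t1} to Theorem~\ref{tt2}.

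First, I reformulate \eqref{cc1b}. Since $\mathrm{supp}(\sigma)\subset[0,\tau_1)$ is disjoint from $(T_0-\delta,T_0)$, each $u^j$ satisfies $\rho_j\partial_t^\alpha u^j=\Delta_{g_j}u^j-q_j u^j$ on the measurement window. Taking the normal trace on $\Gamma_{out}$, using the common Dirichlet datum $u^j|_{\partial M}=\Phi$, the identity $\partial_\nu\Delta_{g_1}h=\partial_\nu\Delta_{g_2}h$ on $\Gamma_{out}$ for every $h\in H^4(M)$ from \eqref{cc1c}, and the boundary matchings of the coefficients \eqref{cc1d}, hypothesis \eqref{cc1b} is converted into
\[
\rho_1\,\partial_\nu\partial_t^\alpha u^1=\rho_2\,\partial_\nu\partial_t^\alpha u^2 \quad \textrm{on } (T_0-\delta,T_0)\times\Gamma_{out}.
\]
Combined with \eqref{cc1a}, this yields pointwise agreement of both $\partial_\nu u^j$ and $\partial_\nu\partial_t^\alpha u^j$ on the measurement window.

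Second, I extend these two equalities to $(\tau_1,T)\times\Gamma_{out}$. On $(\tau_2,T)$ the boundary datum $\Phi$ has reached its plateau (every $\psi_k$ equals its constant value $c_k$), so $u^j$ admits the Mittag--Leffler spectral expansion $u^j=u^j_{\infty}+\sum_n a_n^j\,E_\alpha(-\lambda_n^j(t-\tau_2)^\alpha)\,\varphi_n^j$ in the eigenbasis of $\rho_j^{-1}(-\Delta_{g_j}+q_j)$, where $u^j_\infty$ is the stationary Dirichlet lift of $\Phi(T,\cdot)$. Since $E_\alpha$ is entire of order $1/\alpha$, the traces $\partial_\nu u^j$ and $\partial_\nu\partial_t^\alpha u^j$ depend real-analytically on $t\in(\tau_2,T)$, and Step~1's equalities propagate by analytic continuation to all of $(\tau_2,T)\times\Gamma_{out}$. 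Iterating across the successive time-independent plateaus $(t_{2k-1},t_{2k})$ and handling the short $\mathcal C^3$ transition segments $(t_{2k-2},t_{2k-1})$ by a Duhamel representation together with weak unique continuation in $t$ for the homogeneous fractional equation satisfied by $u^1-u^2$ on $(\tau_1,T)\times M$, one extends the two equalities to $(\tau_1,T)\times\Gamma_{out}$.

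Finally, the extension to $(0,\tau_1)\times\Gamma_{out}$ follows from Abel inversion. Setting $v:=\partial_\nu u^1-\partial_\nu u^2$ on $\Gamma_{out}$, the Caputo formula \eqref{cap} combined with $\rho_1\partial_\nu\partial_t^\alpha u^1=\rho_2\partial_\nu\partial_t^\alpha u^2$ and $v\equiv 0$ on $(\tau_1,T)\times\Gamma_{out}$ reduces to $\int_0^{\tau_1}(t-s)^{[\alpha]-\alpha}\partial_s^{[\alpha]+1}v(s,\cdot)\,ds=0$ for all $t\in(\tau_1,T)$; the regularity hypotheses $\sigma(0)=\sigma'(0)=\sigma^{(2)}(0)=0$ and $u_0^j\in H^{\lceil\alpha\rceil}_0(M)\cap H^{2\lceil\alpha\rceil}(M)$ provide sufficient smoothness of $v$ to justify injectivity of the Abel transform, yielding $v\equiv 0$ on $(0,\tau_1)\times\Gamma_{out}$, hence \eqref{t4d} on $(0,\tau_2)\times\Gamma_{out}$. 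Corollary~\ref{c1} then concludes. The hardest step is the second one: propagating the real-analyticity through the $\mathcal C^3$ transition windows of $\Phi$ where the Duhamel convolution loses analyticity; this is precisely where the extra measurement $\partial_\nu\Delta_{g_j}u^j$ in \eqref{cc1b} becomes indispensable, compensating for the observational data lost by restricting the measurement to a short time window.
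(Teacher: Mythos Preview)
Your overall strategy---reduce to Corollary~\ref{c1} by upgrading the short-time data to $\partial_\nu u^1=\partial_\nu u^2$ on $(0,\tau_2)\times\Gamma_{out}$---matches the paper's, and your Step~1 (using the equation and the compatibility conditions \eqref{cc1c}--\eqref{cc1d} to convert \eqref{cc1b} into agreement of $\partial_t^\alpha\partial_\nu u^j$ on the measurement window) is essentially the paper's argument, modulo the spurious weights $\rho_j$, which do not appear in \eqref{eqM1}.

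The genuine gap is your Step~2. The difference $u=u^1-u^2$ does \emph{not} satisfy a homogeneous fractional equation on $(\tau_1,T)\times M$: it carries the source $G=(\Delta_{g_1}-\Delta_{g_2})u^2+(q_2-q_1)u^2$, which is nonzero throughout since the coefficients are not yet known to agree. Your appeal to ``weak unique continuation in $t$ for the homogeneous fractional equation satisfied by $u^1-u^2$'' is therefore unfounded. In addition, on the transition windows $(t_{2k-2},t_{2k-1})$ the datum $\Phi$ is only $\mathcal C^3$ in $t$, so neither $u^j$ is time-analytic there, and the iteration through infinitely many such segments accumulating at $\tau_2$ is not justified; when $T_0=\tau_2$ the window $(T_0-\delta,T_0)$ lies entirely inside $(\tau_1,\tau_2)$, so you cannot even start the continuation from $(\tau_2,T)$.

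The paper bypasses all of this. After Step~1 it observes that for every $\phi\in\mathcal C_0^\infty(\Gamma_{out})$ the scalar function $v_\phi(t):=\langle\partial_\nu u(t,\cdot),\phi\rangle_{L^2(\partial M)}$ lies in $W^{\lceil\alpha\rceil,1}(0,T)$ and satisfies $v_\phi=\partial_t^\alpha v_\phi=0$ on $(T_0-\delta,T_0)$, and then applies \cite[Theorem 1]{KJ} directly: any such function with $\alpha\in(0,2)\setminus\{1\}$ must vanish on the whole initial segment $(0,T_0)$. This single external lemma replaces your Steps~2 and~3 in one stroke. Your Step~3's ``Abel inversion'' is in fact the nontrivial content of that lemma---the relation $\int_0^{\tau_1}(t-s)^{[\alpha]-\alpha}\partial_s^{[\alpha]+1}v(s)\,ds=0$ for $t>\tau_1$ is not a classical Abel equation (the evaluation points lie outside the support of the unknown), and its injectivity genuinely requires $\alpha\notin\mathbb N$.
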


We can also extend our results to the case where the input and the measurements are applied on some disjoint sets with respect to the space variable.

\begin{cor}\label{c2}
Let the condition of Corollary \ref{c1} be fulfilled and denote by $u^j$, $j=1,2$, the solution of \eqref{eqM1} with $\Phi$ given by \eqref{g}, $(M,g,u_0,F)=(M_j,g_j, u_0^j,F_j)$, $j=1,2$, and $q\equiv0$. In addition, we assume that the wave equation on $(0,+\infty)\times M_j$, $j=1,2$, is exactly controllable from $\Gamma_{in,*}$$^1$\footnote{$^1$ Here we refer to \cite{Bardos1992} for geometrical conditions that guarantee the exact controllability of the wave equation from $\Gamma_{in,*}$. 
} and $\overline{\Gamma_{in}}\cap\overline{\Gamma_{out}}=\emptyset$. Then \eqref{t4d} implies that $(M_1,g_1)$ and $(M_2,g_2)$ are isometric and \eqref{mu_c} holds true.
\end{cor}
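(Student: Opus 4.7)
The plan is to follow the strategy of Corollary \ref{c1} and to replace the use of Dirichlet-to-Neumann data on $\Gamma_{in}=\Gamma_{out}$ by a two-sided partial DN data which is then lifted to full boundary spectral data by means of exact controllability of the wave equation from $\Gamma_{in,*}$. More precisely, I would first reproduce the initial reduction of \cite{KLLY} used for Corollary \ref{c1}: expand $u^j$ via its Mittag-Leffler or (for $\alpha=1$) Laplace representation with respect to the spectrum of the elliptic operator $A_j:=-\rho_j^{-1}\Delta_{g_j}$ on $M_j$. Combining $c_1=0$ with \eqref{t4d} and the density of Span$(\{\chi\eta_k\})$ in $H^{\frac{3}{2}}(\partial M_1)$, one then recovers, for every $\lambda$ in a suitable complex region and every Dirichlet datum $h$ supported in $\Gamma_{in,*}$, the equality on $\Gamma_{out}$ of the normal derivatives of the solutions $v_\lambda^j$ of $(-\Delta_{g_j}+\lambda\rho_j)v_\lambda^j=0$ in $M_j$ with $v_\lambda^j|_{\partial M_j}=h$. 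This amounts to equality of the partial Dirichlet-to-Neumann maps $\Lambda_{g_1}^{\Gamma_{in,*},\Gamma_{out}}(\lambda)=\Lambda_{g_2}^{\Gamma_{in,*},\Gamma_{out}}(\lambda)$ for all such $\lambda$.

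The main new point is the second step. Exact controllability of the wave equation on $(0,+\infty)\times M_j$ from $\Gamma_{in,*}$ is equivalent, through the Hilbert Uniqueness Method, to observability from $\Gamma_{in,*}$, which yields completeness of the traces on $\Gamma_{in,*}$ of the eigenfunctions of $A_j$, together with a Riesz-basis-type lower bound for these traces. Combined with the disjointness $\overline{\Gamma_{in}}\cap\overline{\Gamma_{out}}=\emptyset$ and with the asymptotic analysis of $v_\lambda^j$ near each eigenvalue $\lambda=-\mu_n$ of $A_j$, this allows one to decouple the individual eigenmode contributions and thereby to reconstruct the Gel'fand boundary spectral data $\{(\mu_n,\phi_{n,j}|_{\Gamma_{out}})\}$ from the partial DN data of the first step. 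Applying then the boundary control reconstruction results of \cite{KKL,KKLO,LO}, exactly as they are used in \cite{KLLY}, one concludes that $(M_1,g_1)$ and $(M_2,g_2)$ are isometric through a diffeomorphism $\phi:M_2\to M_1$ fixing $\partial M$.

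Once the isometry $\phi$ is known, the equalities $\alpha_1=\alpha_2$, $u_0^2=u_0^1\circ\phi$, and $f_2=f_1\circ\phi$ follow exactly as in the proof of Corollary \ref{c1}, by pulling both problems back to the same manifold and invoking the source and initial condition uniqueness under one of the conditions (iv'), (v'), (vi'). The main obstacle I expect is the second step, namely the translation of the exact controllability of the wave equation into a quantitative spectral completeness/basis property on $\Gamma_{in,*}$ sufficient to support the eigenmode-by-eigenmode decoupling of the partial DN data into boundary spectral data, uniformly in $n$, despite the strict disjointness of $\Gamma_{in}$ and $\Gamma_{out}$.
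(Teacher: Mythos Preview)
Your proposal is correct and matches the paper's intended approach: the paper itself omits the proof of Corollary~\ref{c2}, stating only that it follows from the arguments of Corollary~\ref{c1} combined with \cite[Corollary~2.7]{KLLY}, which is precisely the reduction-to-partial-DN-data plus boundary-control route you outline. The obstacle you flag in the second step is not something you need to reprove---it is exactly the content of \cite{LO} (and its adaptation in \cite{KKLO,KLLY}), so you may invoke those results directly rather than rebuild the spectral decoupling from scratch.
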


Let us observe that the results of Theorem \ref{t1}, \ref{tt2} and \ref{t2} correspond to the simultaneous unique determination of two coefficients among the set of parameters $\{\rho,a,B,q\}$, the order of derivation $\alpha$, the obstacle $\omega$ and the  internal space dependent sources  $\{u_0, f\}$, from a single boundary measurement of the solution of \eqref{eq1}.  In the same way, Corollary \ref{c1}, \ref{c2} provide the simultaneous unique determination (up to isometry) of the Riemannian manifold  $(M,g)$ as well as  the internal sources $\{u_0, f\}$. Assuming that $\omega_1=\omega_2=\emptyset$, the results of Theorem \ref{t1}, \ref{tt2} and \ref{t2} can be stated in terms of simultaneous recovery of the set of the coefficients, the order of derivation in time and the  internal source without requiring the extra conditions \eqref{ob1}-\eqref{ob2} and \eqref{ob3}-\eqref{ob4}.

To the best of our knowledge, even for $\alpha=1$, the results of Theorem \ref{t1} and \ref{t2} correspond to the first resolution of three among the most important class of inverse problems (inverse coefficient, inverse source and inverse obstacle problems) stated for partial differential equations from a single boundary measurement. In addition in Corollary \ref{c1}, \ref{c2}, we extend, for what seems to be the first time, this approach to the simultaneous determination of a Riemannian manifold (up to isometry) and an internal source. While several authors considered the recovery of coefficients appearing in different evolution PDEs from a single boundary measurement (e.g. \cite{FeK,KLLY,Ya}) only some restricted results deal with the simultaneous determination of space dependent coefficients and internal source appearing in an evolution PDE from a single boundary measurement (see \cite{LU}) and none of them seems to consider the simultaneous determination of source, obstacle and coefficients from a single boundary measurement.  In that sense, the results of Theorem \ref{t1}, \ref{t2} and Corollary \ref{c1}, \ref{c2} correspond to, what seems to be, the first results of  simultaneous determination  of general class of space dependent coefficients and internal source and the first results of simultaneous determination of source, obstacle and coefficients from a single boundary measurement for all evolution linear PDEs.

The approach that we use in this work is inspired by the analysis of \cite{KLLY}  mainly devoted to the  determination of coefficients appearing in \eqref{eq1} from a single boundary measurement. In the present paper we extend the work of \cite{KLLY} in five different directions: 1) We give in Theorem \ref{t5} a simplified proof of \cite[Proposition 3.2]{KLLY} based on properties of  time analiticity  of some solutions of problem \eqref{eq1}; 2) We prove that with a class of Dirichlet input similar to the one considered by \cite{KLLY}, in addition to the recovery of coefficients, one can also prove the recovery of internal sources; 3) We add to the recovery of coefficients considered by \cite{KLLY}, the recovery of an obstacle from similar data; 4) In Theorem \ref{t1} and in Corollary \ref{c1}, \ref{c2}, we add to the results with partial data and results stated on manifold of \cite{KLLY} (see \cite[Theorem 2.2]{KLLY} and \cite[Corollary 2.4, 2.6, 2.7]{KLLY}) the recovery of the order of derivation $\alpha$;   5) For $\alpha\neq1$, we show in Theorem \ref{tt2} and  Corollary \ref{cc1}, how one can restrict the measurement considered in \cite[Theorem 2.2]{KLLY} and \cite[Corollary 2.4, 2.6, 2.7]{KLLY} to any time interval of the form $(T_0-\delta,T_0)$ where $\delta$ can be arbitrary small.

Let us remark that, for $\alpha\neq1$, in Theorem \ref{tt2} and in Corollary \ref{cc1} we can restrict our single measurement to any interval in time of the form $(T_0-\delta,T_0)$, with $T_0\in[\tau_2,T]$ and $\delta\in(0,T_0-\tau_1)$ arbitrary chosen, while all other comparable results that we know consider measurement on an interval in time of the form $(0,T_0)$. This improvement of the known results can be applied to the important and difficult problem of  determining coefficients of a PDE from excitation and single measurement made on disjoint sets of the lateral boundary $(0,T)\times\partial\tilde{\Omega}$ (resp. $(0,T)\times\partial M$). Indeed,  assuming that $c_k=0$, $k\in\mathbb N$, with $c_k$ introduced in the definition of the Dirichlet input $\Phi$, and choosing $T_0\in(\tau_2,T)$, $\delta\in(0,T_0-\tau_2)$ in \eqref{t6a}, one can check that the supp$(\Phi)\cap \overline{(T_0-\delta,T_0)\times\Gamma_{out}}=\emptyset$. This means that the results of Theorem \ref{tt2}  and Corollary \ref{cc1} can be applied to the simultaneous determination of coefficients and internal source from a single measurement  separated, by an interval of time, from the application  of the single Dirichlet input $\Phi$. So far, only some small number of articles in the mathematical literature have been devoted to the recovery of coefficients or a manifold from  excitation and  measurements made on disjoint sets for general Riemannian manifolds or a bounded domain (see e.g. \cite{KKLO,KOSY,KLLY,LO}). All these results considered data on disjoint sets with respect to the space variables. As far as we know, the application of these class of results require a geometrical condition imposed  to the support of the inputs $\Gamma_{in}$ (like  the geometrical control condition of \cite{Bardos1992} considered in \cite{KKLO, LO}) and, as far as we know, these results allow only some local recovery of coefficients. In contrast to these results and their applications to fractional diffusion equations stated in \cite{KOSY,KLLY}, in Theorem \ref{tt2} (resp. in Corollary \ref{cc1}) we obtain, for what seems to be the first time, the full recovery of coefficients and manifolds from excitation and single measurement made on disjoint set of the lateral boundary $(0,T)\times\partial\tilde{\Omega}$ (resp. $(0,T)\times\partial M$) with respect to the time variable. Contrary to the works of \cite{KKLO,KOSY,KLLY,LO}, in Corollary \ref{cc1} we do not impose any geometrical condition to $\Gamma_{in}$ and in Theorem \ref{tt2}, Corollary \ref{cc1} we obtain the full recovery of coefficients.  In order to obtain these extensions of the works of  \cite{KLLY,KOSY}, we use an argument borrowed from \cite{KJ} which can only be applied in the case $\alpha\neq1$. For $\alpha=1$, it is not clear that condition \eqref{t6a} implies \eqref{t2aa}. This property emphasis the memory effect of time fractional diffusion equations.

One of the key ingredient in our proofs is based on a step by step argumentation allowing to transfer our inverse problems into a family of inverse problems that we solve separately. However, in Theorem \ref{t1} and in Corollary \ref{c1}, \ref{c2}, we need to consider the recovery of the obstacle, the order of derivation $\alpha$ in the situation where the coefficients of the equation are unknown. To overcome this difficulty, for the recovery of the obstacle we use the extra conditions \eqref{ob1}-\eqref{ob2} and \eqref{ob3}-\eqref{ob4}. However, we prove the recovery of the obstacle without the knowledge of $\alpha$. Moreover, we give a proof of the recovery of the order of derivation $\alpha$ in the context of Theorem \ref{t1} and in Corollary \ref{c1}, \ref{c2} without requiring the knowledge of the manifold and the different coefficient appearing in the equation (see Step 3 in the proof of Theorem \ref{t1} for more details). 

Let us observe that the recovery of the order of derivation $\alpha$ and the simultaneous recovery of the internal sources $\{u_0,f\}$ under assumption (vi) and (vi'), stated in Theorem \ref{t1} and in Corollary \ref{c1}, \ref{c2}, are new in their own. Indeed, in the Step 3 of the proof of Theorem \ref{t1}, we prove for what seems to be the first time, the recovery of the order of derivation $\alpha$ in an unknown medium (coefficients and manifold unknown) from a single boundary measurement associated with a single boundary input while other results seems to consider measurement at one internal point and internal excitation given by the initial condition (see e.g. \cite{HNWY,Ya1}). In addition, in the Step 5 of the proof of Theorem \ref{t1} we show how one can prove the simultaneous recovery of the initial condition $u_0$ and the source term $f$ under the assumption (vi) and (vi'). As far as we known, this is the first result stating the simultaneous recovery of the two internal sources $\{u_0,f\}$. Indeed, it seems that, all other comparable results (see e.g. \cite{JLLY,KSXY,LRY}), considered only the recovery the initial condition $u_0$ or the source term $f$ but not the simultaneous recovery of these two parameters.

\subsection{Outline}
This paper is organized as follows. In Section 3 we recall some properties of solutions of \eqref{eq1} when $T=\infty$, including some properties of analyticity in time of solutions \eqref{eq2}-\eqref{eq3} (see Section 3). Applying these results, in Section 4, 5 and 6 we complete the proof of our uniqueness results. Namely, in Section 4 we prove Theorem \ref{t1} while Section 5 (resp. 6) will be devoted to the proof of Theorem \ref{tt2} (resp. \ref{t2}). Finally, in Section 7 we give the proof of the Corollary \ref{c1}, \ref{cc1} and \ref{c2}.
\section{Analytic extension of solutions}
In this section we consider $(a,\rho,q)$ satisfying \eqref{ell}-\eqref{eq-rho} and $B\in L^\infty(\Omega)^d$. Let $k\in\mathbb N:=\{1,2,\ldots\}$, $\R_+=(0,+\infty)$ and consider the initial boundary value problems

\bel{eq2}
\left\{ \begin{array}{lll} 
&(\rho(x)\partial_t^{\alpha}v_0 -\textrm{div}\left(a(x) \nabla_x v_0 \right)+B\cdot\nabla_x v_0+q(x) v_0)(t,x)  =  F(t,x)\mathds{1}_{(0,T)}(t), & (t,x)\in  \R_+ \times \Omega,\\
&v_0(t,x)  =  0, & (t,x) \in  \R_+ \times \partial\tilde{\Omega}, \\
& v_0(t,x)  =  0, & (t,x) \in  \R_+ \times \partial\omega, \\
&\pd_t^\ell v_0(0,\cdot)  =  u_\ell, & \mbox{in}\ \Omega,\ \ell=0,...,\lceil\alpha\rceil-1,
\end{array}
\right.
\ee
\bel{eq3}
\left\{ \begin{array}{lll} 
&(\rho(x)\partial_t^{\alpha}v_k -\textrm{div}\left(a(x) \nabla_x v_k \right)+B\cdot\nabla_x v_k+q(x) v_k)(t,x)  =  0, & (t,x)\in  \R_+ \times \Omega,\\
&v_k(t,x)  =  d_k\psi_k(t)\chi(x)\eta_k(x), & (t,x) \in  \R_+ \times \partial\tilde{\Omega}, \\
& v_k(t,x)  =  0, & (t,x) \in  \R_+ \times \partial\omega, \\  
&\pd_t^\ell v_k(0,\cdot)  =  0, & \mbox{in}\ \Omega,\ \ell=0,...,\lceil\alpha\rceil-1.
\end{array}
\right.
\ee
Here $\mathds{1}_{(0,T)}$ denotes the characteristic function of $(0,T)$ and we refer to the beginning of Section 1.4 for the definition of the parameters $d_k$, $\psi_k$, $\chi$ and $\eta_k$. For simplicity, we will assume here that for $\alpha\in(1,2)$ we have $u_1\equiv0$. In the present paper, following \cite{KY1,KY3,SY}, we define the weak solutions of the problem
\bel{eq0}\left\{ \begin{array}{lll} 
&(\rho(x)\partial_t^{\alpha}v -\textrm{div}\left(a(x) \nabla_x v \right)+B\cdot\nabla_x v+q(x) v)(t,x)  =   F(t,x)\mathds{1}_{(0,T)}(t), & (t,x)\in  \R_+ \times \Omega,\\
&v(t,x)  =  h(t,x), & (t,x) \in  \R_+ \times \partial\tilde{\Omega}, \\
& v(t,x)  =  0, & (t,x) \in  \R_+ \times \partial\omega, \\  
&\pd_t^\ell v(0,\cdot)  =  u_\ell, & \mbox{in}\ \Omega,\ \ell=0,...,\lceil\alpha\rceil-1,
\end{array}
\right.\ee
 in the following way.
\begin{defn}\label{d1} Let  $F\in L^1(0,T;L^2(\Omega))$, $u_0\in L^2(\Omega)$ and $h\in L^1_{loc}(\R_+;H^{\frac{3}{2}}(\partial\tilde{\Omega}))$ satisfying
$$\inf\{\epsilon>0:\ e^{-\epsilon t}h\in L^1(\R^+; H^{\frac{3}{2}}(\partial\tilde{\Omega}))\}=0$$
We say that the problem \eqref{eq0}  admits a  weak solution $v$ if  $v\in L^1_{\textrm{loc}}(\R^+;L^2(\Omega))$ satisfies the following conditions:\\
1)  $p_*:=\inf\{\epsilon>0:\ e^{-\epsilon t}v\in L^1(\R^+; L^2(\Omega))\}<\infty$ and we can find $p_0\geq p_*$ independent of $F$, $u_0$ and $h$,\\
2) for all $p>p_0$ the Laplace transform in time $V(p)=\int_0^{+\infty}e^{-pt}v(t,.)dt$ of $v$ solves
\[\left\{\begin{aligned}\mathcal AV(p)+\rho(x)p^\alpha V(p)&=\int_0^Te^{-pt}F(t,\cdot)dt+ p^{\alpha-1}\rho u_{0},\quad &\textrm{in }\Omega,\\   V(p)&=0,\quad &\textrm{on }\partial\omega,\\ V(p)&=\int_0^{+\infty}e^{-pt}h(t,\cdot)dt,\quad &\textrm{on }\partial\tilde{\Omega},\end{aligned}\right.\]
where $$\mathcal Au=-\textrm{div}\left(a(x) \nabla_x u \right)+B(x)\cdot\nabla_xu +q(x)u,\quad u\in H^1(\Omega).$$ \end{defn}

One can easily check that the weak solution of \eqref{eq2}-\eqref{eq3} considered by \cite{KY1,KY2,SY} coincides with the one given by Definition \ref{d1}. Moreover, following \cite{KY1,KY2,SY}, we can deduce that, for all $k\in\mathbb N\cup\{0\}$, the problems \eqref{eq2}-\eqref{eq3} admit a unique weak solution $v_0\in\mathcal C((0,+\infty); H^{2\gamma}(\Omega))$, $v_k\in\mathcal C^1([0,+\infty); H^{2\gamma}(\Omega))$, $\gamma\in[0,1)$, $k\in\mathbb N$. Based on the above definition of weak solutions, we will recall some properties of analiticity in time of the solution of  problems \eqref{eq2}-\eqref{eq3}. More precisely, for $k\in\mathbb N$, we fix  $\epsilon_k\in (0,(t_{2k}-t_{2k-1})/2)$ and we set $$  D_{s,\theta}=\{s+re^{i\beta}:\ \beta\in(-\theta,\theta),\ r>0\},\quad s,\theta\in[0,+\infty).$$ Here $(t_k)_{k\in\mathbb N}$ denotes the sequence introduced at the beginning of Section 1.4. For any open set $ U$ of $\mathbb C$ or of $\mathbb R$, and $X$ a Banach space, we denote by $\mathcal H(U;X)$ the set of analytic functions on $ U$ taking values in $X$.
For $B\equiv0$, combining \cite[Proposition 3.1]{KLLY} with \cite[Proposition 2.1]{KSXY}, we obtain the following analytic extension result.
\begin{thm}
\label{t4} Assume that  $B\equiv0$. Let $\epsilon_0\in(0,\tau_1/3)$ be such that 
\bel{t4a} \textrm{supp}(F)\subset [0,\tau_1-3\epsilon_0]\times\overline{\Omega}\ee
and let $\theta\in\left(0,\min\left(\frac{\frac{\pi}{\alpha}-\frac{\pi}{2}}{2},\frac{\pi}{4}\right)\right)$.
Then, for all $\gamma\in[0,1)$, the solution $v_0$ of \eqref{eq2}  can be extended uniquely to a function $\tilde{v}_0\in L^1(0,\tau_1-\epsilon_0;H^{2\gamma}(\Omega))\cap \mathcal C(\overline{\mathcal  D_{\tau_1-\epsilon_0,\theta}};H^{2\gamma}(\Omega))\cap \mathcal H(\mathcal  D_{\tau_1-\epsilon_0,\theta};H^{2\gamma}(\Omega))$. Moreover, for any $k\in\mathbb N$,  the solution $v_k$ of \eqref{eq2}  can be extended uniquely to a function $\tilde{v}_k\in \mathcal C^1([0,t_{2k-1}+\epsilon_k]\cup\overline{\mathcal  D_{t_{2k-1}+\epsilon_k,\theta}};H^2(\Omega))\cap \mathcal H(\mathcal  D_{t_{2k-1}+\epsilon_k,\theta};H^2(\Omega))$.
\end{thm}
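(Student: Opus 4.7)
The plan is to prove the two extension claims separately by reducing each to a Laplace-transform analysis in a suitable sector of the complex plane, in the spirit of \cite[Proposition 3.1]{KLLY} and \cite[Proposition 2.1]{KSXY}.

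\textbf{Plan for $v_k$.} I would remove the time-dependent boundary data by a stationary lift: let $U_k \in H^2(\Omega)$ be the unique solution of $\mathcal{A} U_k = 0$ in $\Omega$ with $U_k = d_k \chi \eta_k$ on $\partial\tilde{\Omega}$ and $U_k = 0$ on $\partial\omega$, and set $w_k(t,x) := v_k(t,x) - \psi_k(t) U_k(x)$. Then $w_k$ solves the fractional equation with zero Dirichlet data on both components of $\partial\Omega$, zero initial conditions (using $\psi_k(0)=0$, which holds since $t_0=\tau_1>0$), and source $G_k(t,x) = -\rho(x) U_k(x) \partial_t^\alpha \psi_k(t)$. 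The crucial observation is that $\psi_k^{([\alpha]+1)}$ is supported in $[t_{2k-2}, t_{2k-1}]$; hence for $t > t_{2k-1}$, the Caputo derivative $\partial_t^\alpha \psi_k(t)$, given by a convolution of $\psi_k^{([\alpha]+1)}$ against the kernel $(t-s)^{[\alpha]-\alpha}$, extends analytically to $\C\setminus(-\infty,t_{2k-1}]$, and in particular to the closed sector $\overline{D_{t_{2k-1}+\epsilon_k, \theta}}$. With this source analyticity in hand, one applies the argument of \cite[Proposition 3.1]{KLLY} to $w_k$: via Laplace transform, $w_k$ is represented as an inverse Bromwich integral which is then deformed into the sector $\{|\arg p|<\pi/2+\theta\}$. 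The bound $\theta<(\pi/\alpha-\pi/2)/2$ is exactly what is needed to keep $p^\alpha$ away from the negative real axis where the spectrum of $-\rho^{-1}\mathcal{A}$ lies, ensuring analyticity of $(p^\alpha \rho + \mathcal{A})^{-1}$; the bound $\theta<\pi/4$ further controls the geometry of the deformation. Adding back the lift yields $\tilde v_k = w_k + \psi_k U_k$ with the stated regularity.

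\textbf{Plan for $v_0$.} Here the boundary data is already zero, and the support condition on $F$ implies that the Laplace transform
\[
\widehat F(p,\cdot) = \int_0^{\tau_1-3\epsilon_0} e^{-pt} F(t,\cdot)\, dt
\]
is entire in $p$ with Paley--Wiener type bounds. By Definition \ref{d1}, $V_0(p,\cdot) = (\mathcal{A}+\rho p^\alpha)^{-1}(\widehat F(p,\cdot)+p^{\alpha-1}\rho u_0)$ with zero Dirichlet data is analytic in $p$ on $\{|\arg p|<\pi/2+\theta\}$. Deforming the Bromwich contour into this sector, the factor $e^{p(t-(\tau_1-3\epsilon_0))}$ that appears in the integrand provides the decay required to justify the deformation whenever $t\in D_{\tau_1-\epsilon_0,\theta}$, yielding the analyticity claim via \cite[Proposition 2.1]{KSXY}. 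The $L^1$ statement on $(0,\tau_1-\epsilon_0)$ follows directly from the well-posedness theory already recalled for \eqref{eq1}.

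\textbf{Main obstacle.} The technical heart is establishing uniform resolvent bounds of the form $\|(\mathcal{A}+\rho p^\alpha)^{-1}\|_{L^2\to H^{2\gamma}} \lesssim |p|^{-\alpha(1-\gamma)}$, and analogous $H^2$ bounds for the $v_k$ analysis, uniformly for $p$ in the sector $\{|\arg p|<\pi/2+\theta\}$. These, combined with the Paley--Wiener decay of $\widehat F$ for $v_0$ and with the algebraic decay of $\widehat{\partial_t^\alpha \psi_k}(p)$ for $v_k$ inherited from the compact support of $\psi_k^{([\alpha]+1)}$, must be sharp enough to make the inverse Laplace integrals absolutely convergent in the relevant Sobolev norm on the deformed contour. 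The ceiling $\gamma<1$ in the $v_0$ statement reflects the regularity loss due to $u_0\in L^2$; the full $H^2$ regularity for $v_k$ benefits from the smoothness of $U_k$ and of the lifted source $G_k$.
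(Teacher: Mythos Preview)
Your proposal is correct and follows precisely the approach the paper takes: the paper does not prove Theorem \ref{t4} directly but simply states that it follows by combining \cite[Proposition 3.1]{KLLY} with \cite[Proposition 2.1]{KSXY}, which are exactly the two results you invoke and whose arguments you outline. Your sketch of the stationary lift for $v_k$ and the Paley--Wiener/contour-deformation argument for $v_0$, together with the sectorial resolvent bounds, is a faithful expansion of what those cited propositions contain.
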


Now let us consider the case $B\in\mathcal C(\overline{\Omega})^d$ a non-uniformly vanishing function, $\rho\in \mathcal C(\overline{\Omega})$, $a\equiv1$ and $q\equiv0$. We consider the following result.
\begin{thm}
\label{t5} Assume that the condition  \eqref{t4a}  is fulfilled,  $a\equiv1$, $q\equiv0$ and $\alpha\in(0,1]$. 
Then, there exists $\theta\in\left(0,\min\left(\frac{\frac{\pi}{\alpha}-\frac{\pi}{2}}{2},\frac{\pi}{4}\right)\right)$ such that, for any $\gamma\in(0,1)$, the solution $v_0$ of \eqref{eq2}  can be extended uniquely to a function $\tilde{v}_0\in  L^1(0,\tau_1-\epsilon_0;H^{2\gamma}(\Omega))\cap \mathcal C(\overline{\mathcal  D_{\tau_1-\epsilon_0,\theta}};H^{2\gamma}(\Omega))\cap \mathcal H(\mathcal  D_{\tau_1-\epsilon_0,\theta};H^{2\gamma}(\Omega))$. Moreover, for any $k\in\mathbb N$,  the solution $v_k$ of \eqref{eq2}  can be extended uniquely to a function $\tilde{v}_k\in \mathcal C^1([0,t_{2k-1}+\epsilon_k]\cup\overline{\mathcal  D_{t_{2k-1}+\epsilon_k,\theta}};H^{2\gamma}(\Omega))\cap \mathcal H(\mathcal  D_{t_{2k-1}+\epsilon_k,\theta};H^{2\gamma}(\Omega))$.
\end{thm}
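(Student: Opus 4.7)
The strategy is to adapt the proof of Theorem~\ref{t4} from \cite{KLLY,KSXY} by replacing the self-adjoint elliptic operator $-\rho^{-1}\Delta$ with the non-self-adjoint operator $\mathcal{A}u := -\rho^{-1}\Delta u + \rho^{-1}B\cdot\nabla u$, with Dirichlet boundary conditions. The proof of Theorem~\ref{t4} proceeds by taking the Laplace transform in time, solving the resulting family of elliptic problems for $V(p)$, and inverting along a Hankel contour to produce an analytic extension in $t$. The only substantive new ingredient needed here is the sectoriality of $\mathcal{A}$, so that this contour-integral machinery carries over.

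I would first prove the sectoriality of $\mathcal{A}$. On the Hilbert space $\mathcal{H}:=L^2(\Omega;\rho\,dx)$, the unperturbed operator $\mathcal{A}_0 u:=-\rho^{-1}\Delta u$ with domain $H^2(\Omega)\cap H^1_0(\Omega)$ is positive self-adjoint and hence sectorial of angle $\pi$. The drift $\mathcal{P}u:=\rho^{-1}B\cdot\nabla u$ satisfies $\|\mathcal{P}u\|_{\mathcal{H}}\leq C\|u\|_{H^1(\Omega)}$; since $D(\mathcal{A}_0^{1/2})=H^1_0(\Omega)$ with equivalent norms, $\mathcal{P}$ is $\mathcal{A}_0^{1/2}$-bounded. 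Standard perturbation theory for sectorial operators then yields constants $p_0>0$ and $\theta_1\in(0,\pi/2)$ such that $\{\lambda\in\mathbb{C}:|\mathrm{arg}(\lambda+p_0)|<\pi-\theta_1\}$ lies in the resolvent set of $-\mathcal{A}$, together with
\begin{equation*}
\|(\lambda+\mathcal{A})^{-1}\|_{\mathcal{L}(\mathcal{H})}\leq \frac{C}{|\lambda+p_0|}.
\end{equation*}
Interpolation with elliptic regularity upgrades this to $\|(\lambda+\mathcal{A})^{-1}g\|_{H^{2\gamma}(\Omega)}\leq C|\lambda+p_0|^{\gamma-1}\|g\|_{L^2(\Omega)}$ for any $\gamma\in(0,1)$. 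The bound $\theta<(\pi/\alpha-\pi/2)/2$, together with $\alpha\in(0,1]$, is precisely what guarantees that $p^{\alpha}$ lies in this resolvent sector whenever $p$ lies on a Hankel contour of angle $\pi/2+\theta$ around $p_0$.

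With sectoriality in hand, I would port over the contour representation of \cite{KLLY,KSXY}. The Laplace transform $V_0$ of $v_0$, as provided by Definition~\ref{d1}, solves $(p^\alpha+\mathcal{A})V_0(p)=\rho^{-1}\widehat{F}(p)$ with zero boundary data, where by \eqref{t4a} the function $\widehat{F}(p)=\int_0^{\tau_1-3\epsilon_0}e^{-pt}F(t,\cdot)\,dt$ is entire in $p$ and satisfies $\|\widehat{F}(p)\|_{L^2(\Omega)}\leq Ce^{(\tau_1-3\epsilon_0)(\re p)_{-}}$. Combined with the resolvent bound one finds
\begin{equation*}
\|V_0(p)\|_{H^{2\gamma}(\Omega)}\leq C|p|^{\alpha(\gamma-1)}e^{(\tau_1-3\epsilon_0)(\re p)_{-}}
\end{equation*}
on the shifted sector. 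Deforming the contour to $\Gamma=\partial(p_0+\Sigma_{\pi/2+\theta})$ and inverting gives
\begin{equation*}
\tilde v_0(t)=\frac{1}{2\pi i}\int_\Gamma e^{pt}V_0(p)\,dp.
\end{equation*}
For $t=(\tau_1-\epsilon_0)+se^{i\phi}$ with $s>0$ and $|\phi|<\theta$, the integrand on the rays of $\Gamma$ is dominated by $Cr^{-\alpha(1-\gamma)}e^{-r(2\epsilon_0\sin\theta+s\sin(\theta-|\phi|))}$; the $2\epsilon_0$-gap between the support of $F$ and the basepoint $\tau_1-\epsilon_0$ produces the strictly negative exponent needed for absolute convergence. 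Differentiation under the integral then gives $\tilde v_0\in\mathcal{C}(\overline{\mathcal{D}_{\tau_1-\epsilon_0,\theta}};H^{2\gamma}(\Omega))\cap\mathcal{H}(\mathcal{D}_{\tau_1-\epsilon_0,\theta};H^{2\gamma}(\Omega))$, while $\tilde v_0\in L^1(0,\tau_1-\epsilon_0;H^{2\gamma}(\Omega))$ comes from the regularity theory in \cite{KY1,SY}, and uniqueness of Laplace inversion forces $\tilde v_0=v_0$ on the real interval.

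For $v_k$, $k\geq1$, I would lift the boundary data by choosing a bounded right inverse $E\colon H^{3/2}(\partial\tilde\Omega)\to H^2(\tilde\Omega)$ of the trace map whose image is supported in a neighborhood of $\partial\tilde\Omega$ disjoint from $\overline{\omega}$, and setting $\Phi_k(t,x):=d_k\psi_k(t)E[\chi\eta_k](x)$. Then $w_k:=v_k-\Phi_k$ solves \eqref{eq2} with zero boundary and initial data and source $F_k:=-\rho\partial_t^\alpha\Phi_k+\Delta\Phi_k-B\cdot\nabla\Phi_k$. Since $\psi_k\equiv c_k$ on $[t_{2k-1},+\infty)$, the source $F_k(t,\cdot)$ is time-independent for $t\geq t_{2k-1}$, so repeating the argument above with $\tau_1-3\epsilon_0$ replaced by $t_{2k-1}$ and $\epsilon_0$ by $\epsilon_k$ yields the analytic extension of $w_k$ on $\mathcal{D}_{t_{2k-1}+\epsilon_k,\theta}$; combined with the explicit analyticity of $\Phi_k$ in $t$, this gives the desired property of $v_k$. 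The main obstacle throughout is the sectoriality step, since $\mathcal{A}$ is non-self-adjoint and the drift $B\cdot\nabla$ is only $\mathcal{A}_0^{1/2}$-bounded, forcing the spectral sector to be shifted by $p_0$ and all subsequent estimates to be rebalanced against this shift; once the shifted sectorial estimate is secured, the rest is a direct template application of the argument in \cite{KLLY,KSXY}.
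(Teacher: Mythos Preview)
Your treatment of $v_0$ is essentially the paper's argument: the paper also establishes sectoriality of $A=\rho^{-1}(-\Delta+B\cdot\nabla)$ (by citing \cite{A,LLMP} rather than perturbation theory), defines the solution operator $S(z)=\frac{1}{2\pi i}\int_{\gamma}e^{zp}(A+p^\alpha)^{-1}dp$ via contour integral, and obtains the analytic extension of $v_0$ by writing $\tilde v_0(z)=\int_0^{\tau_1-3\epsilon_0}S(z-s)F(s,\cdot)\,ds$.

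For $v_k$, your route and the paper's diverge. The paper does \emph{not} lift the boundary data. Instead it writes
\[
v_k(t,\cdot)=u_k(t,\cdot)-\int_0^t S(t-s)\,B\cdot\nabla_x u_k(s,\cdot)\,ds,
\]
where $u_k$ solves the \emph{same} boundary value problem but with $B\equiv 0$. Since $u_k$ falls under Theorem~\ref{t4} (self-adjoint case), its analytic extension $\tilde u_k$ into the sector is already available with values in $H^2(\Omega)$; the paper then extends the convolution by splitting it at $t_{2k-1}+2\delta_k$ and using the analyticity of $S$ (Lemma~\ref{l2}) together with that of $\tilde u_k$. This avoids any interaction between the fractional derivative and a lifted Dirichlet datum.

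Your lifting approach has a genuine gap precisely at that interaction. You claim that $F_k=-\rho\,\partial_t^\alpha\Phi_k+\Delta\Phi_k-B\cdot\nabla\Phi_k$ is time-independent for $t\geq t_{2k-1}$ because $\psi_k\equiv c_k$ there. This is false for $\alpha\in(0,1)$: the Caputo derivative has memory, and for $t>t_{2k-1}$
\[
\partial_t^\alpha\psi_k(t)=\frac{1}{\Gamma(1-\alpha)}\int_0^{t_{2k-1}}(t-s)^{-\alpha}\psi_k'(s)\,ds
\]
is a nonconstant (in fact $\sim t^{-\alpha}$) function of $t$. Hence $F_k$ is neither constant nor compactly supported in time, and the ``argument above'' for compactly supported sources does not apply as stated. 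The defect is repairable --- $\partial_t^\alpha\psi_k$ is visibly analytic in $t$ on $\{t>t_{2k-1}\}$ and its Laplace transform $p^\alpha\widehat{\psi_k}(p)$ has the right sectorial bounds --- but this requires an additional argument, not a one-line ``repeat''. The paper's perturbation formula sidesteps this issue entirely, which is the main advantage of its route.
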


The second claim of this theorem can be deduced from \cite[Proposition 3.2]{KLLY}. However, we will give here a simplified proof not based on iteration arguments. For this purpose, we fix  $A$ the unbounded elliptic operator defined by $A=\rho^{-1}\mathcal A$ acting on $L^2(\Omega;\rho dx)$ with domain $D(A)=H^1_0(\Omega)\cap H^2(\Omega)$. According to \cite[Theorem 2.1]{A} (see also \cite[Theorem 2.5.1]{LLMP}), there exists $\theta_0\in\left(\frac{\pi}{2},\pi\right)$ and $r_0\geq0$ such that the set $ D_{r_0,\theta_0}$ is in the resolvent set of $A$. Moreover, there exists $C>0$, depending on $\mathcal A$, $\rho$,  $\Omega$, such that
\bel{ess}\norm{(A+z)^{-1}}_{\mathcal B(L^2(\Omega;\rho dx))}+|z-r_0|^{-1}\norm{(A+z)^{-1}}_{\mathcal B(L^2(\Omega;\rho dx);H^2(\Omega))}\leq C|z-r_0|^{-1},\quad z\in  D_{r_0,\theta_0}.\ee
Here we use the fact that, thanks to \eqref{eq-rho}, $L^2(\Omega)=L^2(\Omega;\rho dx)$ with equivalent norms.
We fix $\theta_1\in\left(\frac{\pi}{2},\frac{\pi}{2}+\frac{\theta_0-\frac{\pi}{2}}{2}\right)$, $\delta \in (0,+\infty)$ and we consider $ \gamma(\delta,\theta_1)$ the contour in $\C$ defined by
$$ 
\gamma(\delta,\theta_1) := \gamma_-(\delta,\theta_1) \cup \gamma_0(\delta,\theta_1) \cup \gamma_+(\delta,\theta_1)$$
oriented in the counterclockwise direction, where
$$
\gamma_0(\delta,\theta_1) := \{ r_1+\delta e^{i \beta};\ \beta \in
[-\theta_1,\theta_1] \}\ \mbox{and}\ \gamma_\pm(\delta,\theta_1) 
:= \{r_1+s e^{\pm i \theta_1};\ s \in [\delta,+\infty) \}$$
and the two copies of the $\pm$ sign in the above identity must both be replaced in the same way. Here we choose $r_1>r_0$ large enough and in particular,  for all $\delta>0$, we have $\gamma(\delta,\theta_1)\subset D_{r_0,\theta_0}$.
Let  $\theta_2\in \left(0,\theta_1-\frac{\pi}{2}\right)$. Applying  the above properties of the operator $A$,  for $\alpha\in(0,1]$ and $z\in D_{0,\theta_2}$, we can define the operator $S(z)\in\mathcal B(L^2(\Omega))$ by 
$$S(z)u_0=\frac{1}{2i\pi}\int_{\gamma(\delta,\theta_1)}e^{zp}(A+ p^\alpha)^{-1}u_0dp,\quad u_0\in L^2(\Omega).$$
We  consider first the following property of the map $z\mapsto S(z)$.

\begin{lem}\label{l2} For all $\gamma\in[0,1]$, the map $z\mapsto S(z)$ is lying in $\mathcal H(D_{0,\theta_2};\mathcal B(L^2(\Omega); H^{2\gamma}(\Omega)))$ and there exists $C>0$ depending only on $\mathcal A$, $\rho$ and $\Omega$ such that
\bel{l2a} \norm{S(z)}_{B(L^2(\Omega;\rho dx); H^{2\gamma}(\Omega))}\leq C\max(|z|^{\alpha(1-\gamma)-1},1)e^{r_1\re(z)},\quad z\in D_{0,\theta_2}.\ee

\end{lem}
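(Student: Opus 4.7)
The plan is to exploit the contour integral representation of $S(z)$, combined with the resolvent estimate \eqref{ess}, to derive the bound \eqref{l2a}. I would proceed in four steps.

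First, I would verify that $p^\alpha \in D_{r_0,\theta_0}$ for every $p \in \gamma(\delta,\theta_1)$, so that $(A+p^\alpha)^{-1}$ is well defined on the whole contour. Writing $p = |p|e^{i\phi}$ with $|\phi|<\theta_1$, one has $|\arg p^\alpha| = \alpha|\phi| \leq \alpha\theta_1 \leq \theta_1 < \theta_0$ since $\alpha \in (0,1]$. A direct computation on the rays $\gamma_\pm$ gives $|p|^2 = r_1^2 + 2r_1 s\cos\theta_1 + s^2 \geq r_1^2\sin^2\theta_1 > 0$ uniformly in $s$, so by choosing $r_1 > r_0$ sufficiently large so that $(r_1\sin\theta_1)^\alpha \cos(\alpha\theta_1) > 2r_0$, one guarantees $p^\alpha - r_0 \in D_{0,\theta_0}$ together with the uniform lower bound $|p^\alpha - r_0| \geq c_1 |p|^\alpha$ for some $c_1 > 0$ depending only on $r_1,\theta_1,\alpha$.

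Second, I would interpolate the estimate \eqref{ess} between $L^2\to L^2$ and $L^2\to H^2$ using $H^{2\gamma}(\Omega) = [L^2(\Omega),H^2(\Omega)]_\gamma$ to obtain
\[
\norm{(A+w)^{-1}}_{\mathcal B(L^2(\Omega;\rho dx);H^{2\gamma}(\Omega))} \leq \frac{C}{|w-r_0|^{1-\gamma}}, \qquad w \in D_{r_0,\theta_0},
\]
and then plug in $w = p^\alpha$ together with the lower bound from the previous step to get $\norm{(A+p^\alpha)^{-1}}_{\mathcal B(L^2(\Omega;\rho dx);H^{2\gamma}(\Omega))} \leq C' |p|^{-\alpha(1-\gamma)}$ uniformly on $\gamma(\delta,\theta_1)$. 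For the exponential factor I would write $zp = zr_1 + z(p-r_1)$ and use the hypothesis $\theta_2 < \theta_1 - \frac{\pi}{2}$ to conclude that for $z \in D_{0,\theta_2}$ the argument $\arg(ze^{\pm i\theta_1})$ lies strictly outside $[-\frac{\pi}{2},\frac{\pi}{2}]$, which yields $\re(ze^{\pm i\theta_1}) \leq -c_0|z|$ for some $c_0 = c_0(\theta_1,\theta_2) > 0$. Hence $|e^{zp}| \leq e^{r_1\re(z)} e^{-c_0|z|s}$ on $\gamma_\pm$, while the contribution of $\gamma_0$ tends to $0$ with $\delta$ by continuity of the integrand at $p = r_1$; taking the limit $\delta \downarrow 0$ in the representation of $S(z)$ (justified by Cauchy's theorem and the analyticity of $p \mapsto e^{zp}(A+p^\alpha)^{-1}$ in a neighborhood of $r_1$) removes the arc altogether.

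Third, combining the two ingredients yields
\[
\norm{S(z)}_{\mathcal B(L^2(\Omega;\rho dx);H^{2\gamma}(\Omega))} \leq C e^{r_1\re(z)} \int_0^{+\infty} e^{-c_0|z|s} \max\bigl(r_1,s\bigr)^{-\alpha(1-\gamma)}\, ds.
\]
Splitting the integral into $s \leq 1$ (where the integrand is uniformly bounded, contributing at most $C$) and $s > 1$ (where the change of variable $u = c_0|z|s$ turns it into $(c_0|z|)^{\alpha(1-\gamma)-1}\int_{c_0|z|}^{+\infty} e^{-u} u^{-\alpha(1-\gamma)}\, du$), I would conclude that the integral is bounded by $C \max(|z|^{\alpha(1-\gamma)-1},1)$: the assumption $\alpha(1-\gamma)<1$ ensures integrability near $0$ in the inner integral, while the exponential factor handles the regime $|z|\to\infty$. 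Finally, analyticity of $z \mapsto S(z)$ on $D_{0,\theta_2}$ follows from the locally uniform convergence of the integral (the exponential $e^{-c_0|z|s}$ provides a dominating function on any compact subset of $D_{0,\theta_2}$) together with Morera's theorem, since $z \mapsto e^{zp}(A+p^\alpha)^{-1}u_0$ is entire for each fixed $p$ and $u_0$. I expect the main bookkeeping obstacle to be the verification that $p^\alpha$ stays in $D_{r_0,\theta_0}$ with a quantitative lower bound on $|p^\alpha-r_0|$ uniformly on the contour; once this is established, the rest of the proof reduces to routine contour estimates.
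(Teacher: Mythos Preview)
Your argument is sound in spirit and close to the paper's, but the two proofs differ in one technical choice that turns out to matter. You send $\delta\downarrow 0$, eliminating the arc and integrating over the two rays emanating from $r_1$; the paper instead keeps the arc and chooses its radius $z$-dependently, $\delta=|z|^{-1}$, then bounds the arc piece $S_0(z)$ and the ray pieces $S_\pm(z)$ separately. This is the classical device from analytic-semigroup theory. Your route is a bit more direct and avoids tracking the arc, while the paper's deformation is the textbook one.

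There is, however, a genuine gap in your bound at the endpoint $\alpha(1-\gamma)=1$, i.e.\ $\gamma=0$ with $\alpha=1$. You invoke ``the assumption $\alpha(1-\gamma)<1$'' to guarantee integrability of $u^{-\alpha(1-\gamma)}$ near zero, but the lemma is stated for all $\gamma\in[0,1]$, so this is not available. In the critical case your transformed integral $\int_{c_0|z|}^{\infty}e^{-u}u^{-1}\,du$ blows up like $\log(1/|z|)$ as $|z|\to0$, and your method only delivers $\|S(z)\|_{\mathcal B(L^2;L^2)}\le C\log(1/|z|)\,e^{r_1\Re z}$ rather than the claimed uniform bound. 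The paper's $z$-dependent contour is exactly what circumvents this: starting the rays at $s=|z|^{-1}$ makes the lower limit after substitution independent of $z$, while the arc of length $\sim|z|^{-1}$ carries a resolvent of size $\sim|z|$, contributing $O(1)$. If you restrict to $\gamma\in(0,1)$---which is all that Theorem~\ref{t5} actually requires---your proof is complete; to cover the full statement you should either handle the endpoint separately or adopt the $\delta=|z|^{-1}$ deformation.
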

\begin{proof} 
In all this proof $C$ is a constant depending only on $\mathcal A$, $\rho$ and $\Omega$ that may change from line to line.
Using the fact that by interpolation \eqref{ess} implies that
\bel{l2b}\norm{(A+ p^\alpha)^{-1}}_{\mathcal B(L^2(\Omega;\rho dx); H^{2\gamma}(\Omega))}\leq C\abs{|p|^{\alpha}-r_0}^{-(1-\gamma)},\quad p\in\gamma(\delta,\theta_1),\ee
one can easily check that 
$$S\in\mathcal H(D_{0,\theta_2};\mathcal B(L^2(\Omega); H^{2\gamma}(\Omega))).$$
Now let us show the estimate \eqref{l2a}. Fix $z\in D_{0,\theta_2}$. Using the fact that $p\mapsto (A+p^\alpha)^{-1}$ is analytic on $p\in D_{r_0,\theta_0}$ and applying \eqref{l2b} combined with some arguments used  in \cite[Lemma 2.4]{KSY}, one can check  that $S(z)=S_-(z)+S_0(z)+S_+(z)$ with 
$$S_m(z)=\frac{1}{2i\pi}\int_{\gamma_m(|z|^{-1},\theta_1)}e^{zp}(A+ p^\alpha)^{-1}dp,\quad m=0,\mp,\ z\in D_{0,\theta_2}.$$
Therefore, the lemma will be completed if we prove that
\bel{l2c}\norm{S_m(z)}_{B(L^2(\Omega;\rho dx); H^{2\gamma}(\Omega))}\leq C\max(|z|^{\alpha(1-\gamma)-1},1)e^{r_1\re(z)},\quad z\in D_{0,\theta_2},\  m=0,\mp.\ee
For $m=0$, applying \eqref{l2b}, we find
$$\begin{aligned}\|S_0(z)\|_{\mathcal B(L^2(\Omega;\rho dx); H^{2\gamma}(\Omega))}\leq &C\int_{-\theta_1}^{\theta_1}e^{r_1\re(z)}|z|^{-1}\norm{\left(A+ (r_1+|z|^{-1}e^{i\beta})^\alpha\right)^{-1}}_{B(L^2(\Omega;\rho dx); H^{2\gamma}(\Omega))}d\beta\\
&\leq C \abs{(r_1+|z|^{-1}e^{i\beta})^\alpha-r_0}^{-(1-\gamma)}|z|^{-1}e^{r_1\re(z)}\\
&\leq C\max\left(|z|^{\alpha(1-\gamma)-1},1\right)e^{r_1\re(z)},\quad z\in D_{0,\theta_2},\end{aligned}$$
which clearly implies \eqref{l2c} for $m=0$. Here in the last inequality we have used the fact that $r_1>r_0$ is chosen sufficiently large. Now let us consider the case $m=\mp$. For any $z\in D_{0,\theta_2}$, we find
$$\|S_\mp(z)\|_{\mathcal B(L^2(\Omega;\rho dx); H^{2\gamma}(\Omega))}\leq Ce^{r_1\re(z)}\int_{|z|^{-1}}^{+\infty}e^{r |z|\cos(\theta_1+\textrm{arg}(z)) }\|(A+ (re^{i\theta})^\alpha)^{-1}\|_{B(L^2(\Omega;\rho dx);H^{2\gamma}(\Omega))}dr,$$
with $C>0$ independent of $z$.
Applying again \eqref{l2b}, for any $z\in D_{0,\theta_2}$, we obtain
$$\begin{aligned}\|S_\mp(z)\|_{\mathcal B(L^2(\Omega;\rho dx); H^{2\gamma}(\Omega))}&\leq Ce^{r_1\re(z)}\int_{|z|^{-1}}^{+\infty}e^{r |z|\cos(\theta_1+\textrm{arg}(z)) }\abs{(r_1+re^{i\beta})^\alpha-r_0}^{-(1-\gamma)}dr\\
\ &\leq Ce^{r_1\re(z)}\int_0^{+\infty}e^{r|z| \cos(\theta_1-\theta_2) }\abs{(r_1+re^{i\beta})^\alpha-r_0}^{-(1-\gamma)}dr\\
\ &\leq C\max(|z|^{\alpha(1-\gamma)-1},1)e^{r_1\re(z)}\int_0^{+\infty}e^{t \cos(\theta_1-\theta_2)}\max(t^{-(1-\gamma)\alpha},1)dt.\end{aligned}$$
Therefore, using the fact that $\theta_1-\theta_2\in\left(\frac{\pi}{2},\pi\right)$, we deduce that \eqref{t1a} holds also true for $m=\mp$. This completes the proof of the lemma.\end{proof}

In addition to these properties, by combining estimate \eqref{l2a} with the arguments of \cite[Theorem 1.1]{KSY} and \cite[Remark 1]{KSY}, we deduce that , for $F\in L^\infty(0,T;L^2(\Omega))$ satisfying \eqref{t4a}, with $u_0=u_{\lceil\alpha\rceil-1}=0$, \eqref{eq2} admits a unique weak solution $v_0\in L^1_{loc}([0,+\infty);H^{2\gamma}(\Omega))$, $\gamma\in(0,1)$, taking the form
\bel{t2cd}v_0(t,\cdot)=\int_0^tS(t-s)F(s,\cdot)\mathds{1}_{(0,T)}(s)ds,\quad t\in\R_+.\ee
Using some arguments similar to \cite[Proposition 6.1.]{KSXY}, one can show that the identity \eqref{t2cd} holds true for source terms $F$ lying in $L^1(0,T;L^2(\Omega))$. Armed with this result we are now in position to complete the proof of Theorem \ref{t5}.

\textbf{Proof of Theorem \ref{t5}.} We start with the first claim of Theorem \ref{t5}. For $F=0$, the analytic extension of $v_0$ can be deduced easily from arguments similar to the proof of \cite[Theorem 2.3]{KSY}. For this purpose, without loss of generality we assume that $u_0\equiv0$. Then we fix 
$$\tilde{v}_0(z,\cdot)= \int_0^{\tau_1-3\epsilon_0}S(z-s)F(s,\cdot)ds,\quad z\in D_{\tau_1-2\epsilon_0,\theta_2}.$$
Applying Lemma \ref{l2}, we deduce that  $\tilde{v}_0\in\mathcal H(D_{\tau_1-2\epsilon_0,\theta_2};H^{2\gamma}(\Omega))$ and applying \eqref{t4a}, we obtain
$$\tilde{v}_0(t,\cdot)=\int_0^{\tau_1-3\epsilon_0}S(t-s)F(s,\cdot)ds=\int_0^{t}S(t-s)F(s,\cdot)ds=v_0(t,\cdot),\quad t\in(\tau-\epsilon_0,+\infty).$$
This clearly implies the first claim of the theorem. 

Now let us consider the second claim of the theorem. For this purpose, we fix $k\in\mathbb N$ and we consider $\delta_k\in(0,\epsilon_k/3)$. Let us consider $u_k$ solving
$$\left\{ \begin{array}{lll} 
&(\rho(x)\partial_t^{\alpha}u_k -\Delta u_k)(t,x)   =  0, & (t,x)\in  \R_+ \times \Omega,\\
&u_k(t,x)  =  d_k\psi_k(t)\chi(x)\eta_k(x), & (t,x) \in  \R_+ \times \partial\tilde{\Omega}, \\
&u_k(t,x)=0,& (t,x) \in  \R_+ \times \partial\omega, \\  
&\pd_t^\ell u_k(0,\cdot)  =  0, & \mbox{in}\ \Omega,\ \ell=0,...,\lceil\alpha\rceil-1.
\end{array}
\right.$$
Using the above properties, we deduce that the solution $v_k$ of \eqref{eq3} is given by
\bel{t55}v_k(t,\cdot)=u_k(t,\cdot)-\int_0^{t}S(t-s)B\cdot\nabla_x u_k(s,\cdot)ds,\quad t\in\R_+.\ee
Here according to estimate \eqref{l2a}, we can chose $p_0=r_1$. In view of Theorem \ref{t4},  $u_k$ can be extended to $\tilde{u}_k\in \mathcal C^1([0,t_{2k-1}+\epsilon_k]\cup\overline{\mathcal  D_{t_{2k-1}+\delta_k,\theta_2}};H^2(\Omega))\cap\mathcal H( D_{t_{2k-1}+\delta_k,\theta_2};H^{2}(\Omega))$. Therefore, we can define
\bel{t5c}\tilde{v}_k(z,\cdot)=\tilde{u}_k(z,\cdot)+\tilde{w}_k(z,\cdot)+ \tilde{y}_k(z,\cdot),\quad z\in D_{t_{2k-1}+\delta_k,\theta_2},\ee
with 
$$\tilde{w}_k(z,\cdot)=-\int_0^{t_{2k-1}+2\delta_k}S(z-s)B\cdot\nabla_x u_k(s,\cdot)ds,$$
$$\tilde{y}_k(z,\cdot)=-\int_{0}^{z-t_{2k-1}-2\delta_k}S(p)B\cdot\nabla_x \tilde{u}_k(z-p,\cdot)dp.$$
It is clear that 
$$\begin{aligned}\tilde{v}_k(t,\cdot)&=u_k(t,\cdot)-\int_0^{t_{2k-1}+2\delta_k}S(t-s)B\cdot\nabla_x u_k(s,\cdot)ds-\int_{0}^{t-t_{2k-1}-2\delta_k}S(s)B\cdot\nabla_x \tilde{u}_k(t-s,\cdot)ds\\
\ &=u_k(t,\cdot)-\int_0^{t}S(t-s)B\cdot\nabla_x u_k(s,\cdot)ds,\quad t\in(t_k+\epsilon_k,+\infty).\end{aligned}$$
Combining this with \eqref{t55}, one can check that $\tilde{v}_k$ extends $v_k$. Therefore, using the fact that $\overline{D_{t_{2k-1}+\epsilon_k,\theta_2}}\subset D_{t_{2k-1}+2\delta_k,\theta_2}$, the proof will be completed if we prove that $\tilde{v}_k\in \mathcal H( D_{t_{2k-1}+2\delta_k,\theta_2};H^{2\gamma}(\Omega))$. For this purpose, we only need to show that $\tilde{w}_k$ and $\tilde{y}_k$ are lying in $\mathcal H( D_{t_{2k-1}+2\delta_k,\theta_2};H^{2\gamma}(\Omega))$. For $\tilde{w}_k$, we first fix $\delta_*\in (0,t_{2k-1}+2\delta_k)$ and we consider
$$\tilde{w}_{k,\delta_*}:=-\int_0^{t_{2k-1}+2\delta_k-\delta_*}S(z-s)B\cdot\nabla_x u_k(s,\cdot)ds.$$
Repeating the arguments used at the beginning of this proof, we deduce that $\tilde{w}_{k,\delta_*}\in \mathcal H( D_{t_{2k-1}+2\delta_k,\theta_2};H^{2\gamma}(\Omega))$. Moreover, for any compact set $K\subset D_{t_{2k-1}+2\delta_k,\theta_2}$, applying \eqref{l2a}, for all $z\in K$, we get
$$\norm{\tilde{w}_{k,\delta_*}(z,\cdot)-\tilde{w}_k(z,\cdot)}_{H^{2\gamma}(\Omega)}\leq C\norm{u_k}_{L^\infty(0,T;H^1(\Omega))}\int_{t_{2k-1}+2\delta_k-\delta_*}^{t_{2k-1}+2\delta_k}\max(|z-s|^{(1-\gamma)\alpha-1},1)ds.$$
This proves that $\tilde{w}_{k,\delta_*}$ converges uniformly,  with respect to $z\in K$, as $\delta_*\to0$ to $\tilde{w}_k$. Therefore, we have $\tilde{w}_k\in \mathcal H( D_{t_{2k-1}+2\delta_k,\theta_2};H^{2\gamma}(\Omega))$. For $\tilde{y}_k$, combining the fact that $\tilde{u}_k\in \mathcal H( D_{t_{2k-1}+\delta_k,\theta_2};H^{2}(\Omega))$ with the estimate \eqref{l2a} and the fact that, according to Lemma \ref{l2}, $S\in \mathcal H(D_{0,\theta_2};\mathcal B(L^2(\Omega;\rho dx); H^{2\gamma}(\Omega)))$, one can check (see also step 2 in the proof of \cite[Proposition 3.2]{KLLY}) that $\tilde{y}_k\in \mathcal H(D_{t_{2k-1}+2\delta_k,\theta_2}; H^{2\gamma}(\Omega))$. This completes the proof of the theorem.\qed

Using the analiticity properties described above, we will complete the proof of our main results in the coming sections.
We start with Theorem \ref{t1},  \ref{tt2}, \ref{t2}. Then, we prove   Corollary \ref{c1}, \ref{cc1}, \ref{c2} in the last section. We will only give the detail of the proof of Theorem \ref{t1}, \ref{tt2} and \ref{t2}. For Corollary \ref{c1}, \ref{cc1}, \ref{c2}, we will mainly adapt to the framework of manifolds the arguments used in Theorem \ref{t1}, \ref{tt2}.

\section{Proof of Theorem \ref{t1}.}  

The proof of Theorem \ref{t1} will be decomposed into five steps. We start by proving that \eqref{t1d} implies that, for all $k\in\mathbb N\cup\{0\}$, we have
\bel{t1e}a_1(x)\partial_\nu v_k^{1}(t,x)=a_2(x)\partial_\nu v_k^{2}(t,x),\quad (t,x)\in(0,+\infty)\times \Gamma_{out},\ee
with $v_0^j$ the solution of \eqref{eq2} for $B\equiv0$, $\alpha=\alpha_j$, $\omega=\omega_j$ and $(a,\rho,q, u_0,u_{1},F)=(a_j,\rho_j,q_j,u_0^j,0,F_j)$, $j=1,2$, and $v_k^{j}$, $j=1,2$, $k\in\mathbb N$, the solution of \eqref{eq3} for $B\equiv0$, $\omega=\omega_j$ and $(a,\rho,q)=(a_j,\rho_j,q_j)$, $j=1,2$.
Using \eqref{t1e} with $k=1$ and exploiting condition \eqref{ob1}-\eqref{ob2}, we will deduce that $\omega_1=\omega_2$. Then, applying \eqref{t1e}, with $k=1$, we get $\alpha_1=\alpha_2$. After that, using \eqref{t1e}, with  $k\in\mathbb N$, we will obtain
\bel{t1ee}a_1=a_2,\quad \rho_1=\rho_2,\quad q_1=q_2.\ee
Finally, combining all these results and applying \eqref{t1e} with $k=0$ we will get
\bel{t1eee}u_0^1=u_0^2,\quad f_1=f_2.\ee

\textbf{Step 1.} We will prove \eqref{t1e} by iteration. Let us start with $k=0$. For this purpose, using the properties of the sequence $(\psi_k)_{k\geq 1}$, let us observe that
$$\psi_k(t)=0,\quad k\geq1,\ t\in(0,t_0)=(0,\tau_1).$$
Therefore, the restriction of $u^{j}$  to $(0,\tau_1)\times\Omega_j$ solves the boundary value problem
\begin{equation}\label{beq}\begin{cases} 
(\rho_j(x)\partial_t^{\alpha}u^j -\textrm{div}(\nabla_x a_ju^j)(t,x) +q_ju^j(t,x)  =  \sigma(t) f_j(x), & (t,x)\in  (0,\tau_1) \times \Omega_j,\\
u^j(t,x)  =  0, & (t,x) \in  (0,\tau_1)\times \partial\tilde{\Omega}, \\
u^j(t,x)=0,& (t,x) \in  (0,\tau_1) \times \partial\omega_j, \\  
\begin{cases}
u^j=u_0^j & \mbox{if }0<\alpha\leq1,\\
u^j=u_0,\quad \partial_t u^j=0 & \mbox{if }1<\alpha<2,
\end{cases} & \mbox{in }\{0\}\times \Omega_j.
\end{cases}\end{equation}
Using the fact that the restriction of $v_0^{j}$  to $(0,\tau_1)\times\Omega_j$ solves also \eqref{beq} and applying the uniqueness of the solution of \eqref{beq}, we get
$$v_0^{j}(t,x)=u^{j}(t,x),\quad (t,x)\in(0,\tau_1)\times\Omega_j$$
and condition \eqref{t1d} implies
\bel{t1f}a_1(x)\partial_\nu v_0^{1}(t,x)=a_2(x)\partial_\nu v_0^{2}(t,x),\quad (t,x)\in(0,\tau_1)\times \Gamma_{out}.\ee
Combining this with \eqref{source}-\eqref{source2} and applying Theorem \ref{t4}, we deduce that there exits $\epsilon_1\in(0,\tau_1)$ such that $v_0^j\in \mathcal A((\tau_1-\epsilon_1,+\infty);H^{\frac{7}{4}}(\Omega))$, $j=1,2$, which implies that $\partial_\nu v_0^j\in \mathcal A((\tau_1-\epsilon_1,+\infty);L^2(\partial\tilde{\Omega}))$. Therefore, \eqref{t1f} implies \eqref{t1e} for $k=0$. 

Now let us consider $\ell\geq0$ and 	assume that \eqref{t1e} is fulfilled for $k=0,\ldots,\ell$.  Since
$$\psi_m(t)=0,\quad m\geq\ell +2,\ t\in(0,t_{2\ell+2}),$$
we know that 
$$\sum_{k=0}^{\ell+1}v_k^{j}(t,x)=u^{j}(t,x),\quad (t,x)\in(0,t_{2\ell+2})\times\Omega.$$
Therefore, \eqref{t1d} implies
$$\sum_{k=0}^{\ell+1}a_1(x)\partial_\nu v_k^{1}(t,x)=\sum_{k=1}^{\ell+1}a_2(x)\partial_\nu v_k^{2}(t,x),\quad (t,x)\in(0,t_{2\ell+2})\times \Gamma_{out}.$$
Then, from our iteration assumption we deduce that
$$a_1(x)\partial_\nu v_{\ell+1}^{1}(t,x)=a_2(x)\partial_\nu v_{\ell+1}^{2}(t,x),\quad (t,x)\in(0,t_{2\ell+2})\times \Gamma_{out}.$$
Therefore, applying again Theorem \ref{t4} we deduce that $t\mapsto \partial_\nu v_{\ell+1}^{j}(t,\cdot)_{|\Gamma_{out}}\in \mathcal A((t_{2\ell+1}+\epsilon_\ell,+\infty);L^2(\Gamma_{out}))$, $j=1,2$, and we get \eqref{t1e} for $k=\ell+1$. This proves that \eqref{t1e} holds true for all $k\in\mathbb N\cup\{0\}$.\\

\textbf{Step 2.} We will now show that \eqref{t1e} with $k=1$ implies that $\omega_1=\omega_2$. Let us fix $V_1^j(p,x)$ the Laplace transform in time, at $p>0$, of the solution $v_1^j$ of the  problem \eqref{eq2} for $k=1$. The definition of weak solution of \eqref{eq1} implies that, for all $p>0$, $V_1^j(p,\cdot)$ solves
\bel{t11a}\left\{\begin{aligned}-\textrm{div}\left(a_j \nabla_x V_1^j(p,\cdot) \right)+q_jV_1^j(p,\cdot) +\rho_jp^{\alpha_j} V_1^j(p,\cdot)&=0,\quad &\textrm{in }\Omega_j,\\   V_1^j(p,\cdot)&=\hat{\psi}_1(p)\chi\eta_1,\quad &\textrm{on }\partial\tilde{\Omega},\\ V_1^j(p,\cdot)&=0,\quad &\textrm{on }\partial\omega_j,\end{aligned}\right.\ee
where 
$$\hat{\psi}_1(p)=\int_0^{+\infty}e^{-pt}\psi_1(t)dt,\quad p>0.$$
Following the arguments used in Step  2 of the proof of \cite[Theorem 2.2]{KLLY}, one can check that, for $s\in(3/2,2)$ and for all $p>0$, we have $t\mapsto e^{-pt}v_1^j\in L^1(\R_+;H^s(\Omega_j))$, $j=1,2$. Therefore, we can apply the Laplace transform in time to the identity \eqref{t1e}, with $k=1$, in order to get
$$a_1(x)\partial_\nu V_1^1(p,x)=a_2(x)\partial_\nu V_1^2(p,x),\quad p>0,\ x\in\Gamma_{out}.$$
Choosing $p=1$, we deduce from this  identity  that
\bel{t11b}a_1(x)\partial_\nu V_1^1(1,x)=a_2(x)\partial_\nu V_1^2(1,x),\quad x\in\Gamma_{out}.\ee
Combining \eqref{t11b}  with \eqref{ob1}-\eqref{ob2}, we deduce that the restriction of $V_1(1,\cdot)=V_1^1(1,\cdot)-V_1^2(1,\cdot)$ to $\mathcal O$ satisfies the conditions
\[\left\{\begin{aligned}-\textrm{div}\left(a_1 \nabla_x V_1(1,\cdot) \right)+q_1V_1(1,\cdot) +\rho_1 V_1(1,\cdot)&=0\quad &\textrm{in }\mathcal O,\\   V_1(1,\cdot)=\partial_\nu V_1(1,\cdot)&=0,\quad &\textrm{on }\Gamma_{out}\cap \partial\mathcal O.\end{aligned}\right.\]
Since $\mathcal O$ is connected, applying results of unique continuation for elliptic equations, we find
$$V_1(1,x)=0,\quad x\in\mathcal O.$$
Using the fact that $\partial(\omega_1\cup\omega_2)\subset\partial\mathcal O$, we deduce from this identity that
\bel{t111} V_1^1(1,x)= V_1(1,x)=0,\quad x\in(\partial\omega_2)\setminus(\partial\omega_1).\ee
Moreover, since $\chi\eta_1\in W^{2-\frac{1}{r},r}(\partial\Omega)$,  and $V_1^1(p,\cdot)$ solves \eqref{t11a}, \cite[Theorem 2.4.2.5]{Gr} implies that $V_1^1(1,\cdot)\in W^{2,r}(\Omega)$. Using the fact that $r>d/2$, the Sobolev embedding theorem implies that $V_1^1(1,\cdot)\in \mathcal C(\overline{\Omega})\cap H^1(\Omega)$. Fixing $\omega_*=\omega_2\setminus\overline{\omega_1}$, we deduce from \eqref{t111} that $V_1^1(1,\cdot)\in \mathcal C(\overline{\omega_*})\cap H^1(\omega_*)$ and $V_1^1(1,\cdot)=0$ on $\partial\omega_*$. Therefore, applying \cite[Theorem 9.17]{Br} and \cite[Remark 19]{Br}, we deduce that  the restriction of $V_1^1(1,\cdot)$ to $\omega_*$ is lying in $H^1_0(\omega_*)$. It follows that $V_1^1(1,\cdot)\in H^1_0(\omega_*)$ satisfies
\bel{t11c} -\textrm{div}\left(a_1 \nabla_x V_1^1(1,\cdot) \right)+q_1V_1^1(1,\cdot) +\rho_1 V_1^1(1,\cdot)=0\quad \textrm{in }\omega_*.\ee
Let us fix $H_*h=-\textrm{div}\left(a_1 \nabla_x h \right)+q_1h +\rho_1 h$ with domain $$D(H_*)=\{h\in H^1_0(\omega_*):\ -\textrm{div}\left(a_1 \nabla_x h \right)+q_1h +\rho_1 h\in L^2(\omega_*)\}.$$
Then, condition \eqref{t11c} implies that $V_1^1(1,\cdot)\in D(H_*)$ and $H_*V_1^1(1,\cdot)\equiv0$. On the other hand, since  $q_1\geq 0$ and $\rho_1>0$, one can check that $0$ is not in the spectrum of $H_*$  which implies that
\bel{t11d}V_1^1(1,x)=0,\quad  x\in\omega_*=\omega_2\setminus\overline{\omega_1}.\ee
Combining this with the fact that
$$-\textrm{div}\left(a_1 \nabla_x V_1^1(1,\cdot) \right)+q_1V_1^1(1,\cdot) +\rho_1 V_1^1(1,\cdot)=0\quad \textrm{in }\Omega_1$$
and applying results of unique continuation for elliptic equations we deduce that $V_1^1(1,\cdot)=0$ on $\Omega_1$. On the other hand, since $\psi_1\geq0$ and $\psi_1\not\equiv0$, one can check that $\hat{\psi}_1(1)\neq0$ and we obtain 
$$\chi\eta_1(x)=\frac{V_1^1(1,x)}{\hat{\psi}_1(1)}=0,\quad x\in\partial\tilde{\Omega}.$$
This contradicts the fact that $\chi\eta_1\not\equiv0$. Therefore, we have $\omega_1=\omega_2$.

\textbf{Step 3.}  In this step, we will show  that condition \eqref{t1e}, for $k=1$, implies that $\alpha_1=\alpha_2$. For this purpose, we will start by considering the asymptotic behavior of $\partial_\nu v_1^j(t,\cdot)|_{\Gamma_{out}}$, $j=1,2$, as $t\to+\infty$. Then, combining this asymptotic property with \eqref{t1e}, for $k=1$, we will deduce that $\alpha_1=\alpha_2$. We mention that a similar approach has been considered by other authors (see e.g. \cite{HNWY,Ya1}) with Dirichlet measurement at one internal point (a point $x_0\in\Omega$) in order to prove the recovery of the order of derivation $\alpha$. However, to the best of our knowledge this result will be the first one in that category stated in an unknown medium (since we have not yet proved that $(a_1,\rho_1,q_1)=(a_2,\rho_2,q_2)$), a Neumann boundary measurement and with a Dirichlet input (it seems that all other related results in that category have been stated with non-uniformly vanishing and known initial condition). From now on we fix $\Omega=\Omega_1=\Omega_2=\tilde{\Omega}\setminus\overline{\omega_1}$ which, according to our assumptions, is a $\mathcal C^2$ open connected set. Let us consider the operator $A_j$, $j=1,2$, acting on $L^2(\Omega_j;\rho_jdx)$ with domain $D(A_j)=H^2(\Omega)\cap H^1_0(\Omega)$ defined  by
\[
A_j w:=\frac{-\textrm{div}(a_j\nabla_x w)+q_j w}{\rho_j},\quad w\in D(A_j).
\]
Recall that here we associate with the weighted space $L^2(\Omega_j;\rho_jdx)$ the scalar product
$$\left\langle f,g\right\rangle_{L^2(\Omega_j;\rho_jdx)}=\int_{\Omega_j}f\overline{g}\rho_jdx.$$
Using the fact that the operator $\rho_jA_j$ is a selfadjoint operator acting on $L^2(\Omega_j)$, one can easily check that $A_j$ is a selfadjoint operator acting on $L^2(\Omega_j;\rho_jdx)$. Combining this with the fact that $A_j$ has a compact resolvent, we deduce that its spectrum consist of a strictly increasing sequence of eigenvalues. We fix $\{\lambda^j_k\}_{k\in\mathbb N}$ and $m^j_k\in\mathbb N$  the strictly increasing sequence of the eigenvalues of $A_j$ and the algebraic multiplicity of $\lambda^j_k$, respectively. For each eigenvalue $\lambda^j_k$, we introduce a family $\{\phi^j_{k,\ell}\}_{\ell=1}^{m^j_k}$ of eigenfunctions of $A_j$, i.e.,
\[
A_j\phi^j_{k,\ell}=\lambda^j_k\phi^j_{k,\ell},\quad\ell=1,\ldots,m^j_k,
\]
which forms an orthonormal basis in $L^2(\Omega;\rho_j dx)$ of the algebraic eigenspace of $A_j$ associated with $\lambda^j_k$. We introduce also, for $\beta_1,\beta_2>0$, the Mittag-Leffler function $E_{\beta_1,\beta_2}$ given by
$$
E_{\beta_1,\beta_2}(z)=\sum_{k=0}^\infty \frac{z^k}{\Gamma(\beta_1 k+\beta_2)},
\quad   z\in\mathbb C.
$$
We recall also that $E_{1,1}(z)=e^z$. Using this last property, we will give a unified representation of solutions of \eqref{eq2}-\eqref{eq3} including representation of solutions of classical parabolic equations (case $\alpha=1$). In view of \cite[Theorem 1.3]{KY3}, for $j=1,2$, one can check that, for all $t\in(0,+\infty)$, we have
$$v_1^j(t,\cdot)=\sum_{k=1}^\infty\sum_{\ell=1}^{m_k} \left(-\int_0^t(t-s)^{\alpha_j-1}E_{\alpha_j,\alpha_j}(-\lambda^j_k(t-s)^{\alpha_j})\psi_1(s)\left\langle \chi\eta_1,a_j\partial_{\nu}\phi_{k,\ell}^j\right\rangle_{L^2(\partial\Omega)}ds\right)\phi_{k,\ell}^j.$$
Using the fact  that $c_1=0$, we have supp$(\psi_1)\subset [0,\tau_2]$ and it follows that, for all $t\in(\tau_2,+\infty)$, we find
\bel{t111a}v_1^j(t,\cdot)=\sum_{k=1}^\infty\sum_{\ell=1}^{m_k} \left(-\int_0^{\tau_2}(t-s)^{\alpha_j-1}E_{\alpha_j,\alpha_j}(-\lambda^j_k(t-s)^{\alpha_j})\psi_1(s)\left\langle \chi\eta_1,a_j\partial_{\nu}\phi_{k,\ell}^j\right\rangle_{L^2(\partial\Omega)}ds\right)\phi_{k,\ell}^j.\ee
On the other hand, applying  \cite[Theorem 1.4, page 33-34]{P} one can check that there exists a constant $C>0$ such that
$$\abs{E_{\alpha_j,\alpha_j}(-\lambda t^{\alpha_j})}\leq Ct^{-2\alpha_j}\lambda^{-2},\quad \lambda\in(0,+\infty),\ t\in(0,+\infty),\ j=1,2.$$
Moreover, in light of \cite[Lemma 2.1]{KY3} (see also \cite[Lemma 2.3]{KOSY})   the sequence
$$\sum_{k=1}^N\sum_{\ell=1}^{m_k}\frac{\left\langle \chi\eta_1,a_j\partial_{\nu}\phi_{k,\ell}^j\right\rangle_{L^2(\partial\Omega)}}{\lambda_n}\phi_{k,\ell}^j,\quad N\in\mathbb N,\ j=1,2, $$
converges in the sense of $L^2(\Omega)$. Combining these two estimates, we deduce that for all $t>\tau_2$, the sequence
$$\sum_{k=1}^N\sum_{\ell=1}^{m_k} \left(-\int_0^{\tau_2}(t-s)^{\alpha_j-1}E_{\alpha_j,\alpha_j}(-\lambda_k^j(t-s)^{\alpha_j})\psi_1(s)\left\langle \chi\eta_1,a_j\partial_{\nu}\phi_{k,\ell}^j\right\rangle_{L^2(\partial\Omega)}ds\right)\phi_{k,\ell}^j,\quad N\in\mathbb N$$
converges in the sense of $D(A_j)\subset H^2(\Omega)$. Using this result, we deduce that for all $t>\tau_2$, 
the sequence
$$\sum_{k=1}^N\sum_{\ell=1}^{m_k} \left(-\int_0^{\tau_2}(t-s)^{\alpha_j-1}E_{\alpha_j,\alpha_j}(-\lambda_n(t-s)^{\alpha_j})\psi_1(s)\left\langle \chi\eta_1,a_j\partial_{\nu}\phi_{k,\ell}^j\right\rangle_{L^2(\partial\Omega)}ds\right)\partial_\nu\phi_{k,\ell}^j,\quad N\in\mathbb N$$
converges in the sense of $L^2(\partial\Omega)$ and, for all $t\in(\tau_2,+\infty)$, we have
\bel{t111b}\partial_\nu v_1^j(t,\cdot)=\sum_{k=1}^\infty\sum_{\ell=1}^{m_k} \left(-\int_0^{\tau_2}(t-s)^{\alpha_j-1}E_{\alpha_j,\alpha_j}(-\lambda_k^j(t-s)^{\alpha_j})\psi_1(s)\left\langle \chi\eta_1,a_j\partial_{\nu}\phi_{k,\ell}^j\right\rangle_{L^2(\partial\Omega)}ds\right)\partial_\nu\phi_{k,\ell}^j.\ee
Applying the Lebesgue dominate convergence theorem, we deduce that, for all $t\in(\tau_2+1,+\infty)$ and for $j=1,2$, we have
\bel{t111c}\begin{aligned}&\partial_\nu v_1^j(t,\cdot)\\
&= -\int_0^{\tau_2} (t-s)^{\alpha_j-1}\left(\sum_{k=1}^\infty\sum_{\ell=1}^{m_k} E_{\alpha_j,\alpha_j}(-\lambda_k^j(t-s)^{\alpha_j})\psi_1(s)\left\langle \chi\eta_1,a_j\partial_{\nu}\phi_{k,\ell}^j\right\rangle_{L^2(\partial\Omega)}\partial_\nu\phi_{k,\ell}^j\right)ds.\end{aligned}\ee
Applying formula (1.143) page 34 of \cite{P} we deduce that there exists $C>0$ such that, for all $s\in(0,\tau_2)$, $t>\tau_2+1$ and $k\in\mathbb N$, we have
$$\begin{aligned}\abs{(t-s)^{\alpha_j-1}E_{\alpha_j,\alpha_j}(-\lambda_k^j(t-s)^{\alpha_j})+\frac{(t-s)^{-1-\alpha_j}}{\Gamma(-\alpha_j)(\lambda_k^j)^2}}&\leq C(t-s)^{\alpha_j-1}(\lambda_k^j(t-s)^{\alpha_j})^{-3}\\
\ &\leq C(\lambda_k^j)^{-3}t^{-1-2\alpha_j}.\end{aligned}$$
In this last identity, we assume that $\Gamma(-1)^{-1}=0$.
In addition, using the fact that
$$(t-s)^{-1-\alpha_j}=t^{-1-\alpha_j}+\underset{t\to+\infty}{\mathcal O}(t^{-1-2\alpha_j}),\quad s\in(0,\tau_2),$$
we deduce that there exists $C'$ independent of $t$, $s$, $k$, such that
$$\begin{aligned}&\abs{(t-s)^{\alpha_j-1}E_{\alpha_j,\alpha_j}(-\lambda_k^j(t-s)^{\alpha_j})+\frac{t^{-1-\alpha_j}}{\Gamma(-\alpha_j)(\lambda_k^j)^2}}\\
&\leq \abs{(t-s)^{\alpha_j-1}E_{\alpha_j,\alpha_j}(-\lambda_k^j(t-s)^{\alpha_j})+\frac{(t-s)^{-1-\alpha_j}}{\Gamma(-\alpha_j)(\lambda_k^j)^2}}+ \abs{\frac{t^{-1-\alpha_j}-(t-s)^{-1-\alpha_j}}{\Gamma(-\alpha_j)(\lambda_k^j)^2}}\\
&\leq C'(\lambda_k^j)^{-2}t^{-1-2\alpha_j}.\end{aligned}$$
Applying this estimate, we deduce that, for  all $t> \tau_2+1$, we have
$$\begin{aligned}&\int_0^{\tau_2} \norm{\sum_{k=1}^\infty\sum_{\ell=1}^{m_k} \left[(t-s)^{\alpha_j-1}E_{\alpha_j,\alpha_j}(-\lambda_k^j(t-s)^{\alpha_j})+\frac{t^{-1-\alpha_j}}{\Gamma(-\alpha_j)(\lambda_k^j)^2}\right]\psi_1(s)\left\langle \chi\eta_1,a_j\partial_{\nu}\phi_{k,\ell}^j\right\rangle_{L^2(\partial\Omega)}\phi_{k,\ell}^j}_{D(A_j)}ds\\
&\leq Ct^{-1-2\alpha_j}\norm{\psi_1}_{L^1(\R_+)}\left(\sum_{n=1}^\infty\sum_{k=1}^{m_\ell} \abs{\frac{\left\langle \chi\eta_1,a_j\partial_{\nu}\phi_{k,\ell}^j\right\rangle_{L^2(\partial\Omega)}}{\lambda_k^j}}^2\right)^{\frac{1}{2}}ds\\
&\leq Ct^{-1-2\alpha_j}\norm{\chi\eta_1}_{H^{\frac{1}{2}}(\partial\Omega)},\end{aligned}$$
where $C>0$ is a constant independent of $t$ that may change from line to line. Using this estimate and the continuity of the map $D(A_j)\ni v\mapsto\partial_\nu v\in L^2(\partial\Omega)$, we obtain 
$$\partial_\nu v_1^j(t,\cdot)
=\frac{t^{-1-\alpha_j}}{\Gamma(-\alpha_j)}\left(\int_0^{+\infty}\psi_1(s)ds\right)\left(\sum_{k=1}^\infty\sum_{\ell=1}^{m_k}\frac{\left\langle \chi\eta_1,a_j\partial_{\nu}\phi_{k,\ell}^j\right\rangle_{L^2(\partial\Omega)}}{(\lambda_k^j)^2}\partial_{\nu}\phi_{n,k}^j\right)+\underset{t\to+\infty}{\mathcal O}(t^{-1-2\alpha_j}). $$
In this last identity $\mathcal O$ is considered with respect to the norm of $L^2(\partial\Omega)$.   Let us  consider $G_j\in H^2(\Omega)$ the solution of 
\bel{eqqq2}
\left\{ \begin{array}{rcll} 
-\textrm{div}(a_j\nabla_x G_j)+q_j G_j & = & 0, & x\in \Omega,\\
 G_j(x) & = &  \chi\eta_1(x), & x\in\pd \tilde{\Omega},\\
 G_j(x) & = &  0, & x\in\pd \omega_1.
\end{array}
\right.
\ee
Applying \cite[Lemma 2.1]{KY3}, we deduce that
$$\left\langle G_j,\phi_{k,\ell}^j\right\rangle_{L^2(\Omega;\rho_j dx)}=-\frac{\left\langle \chi\eta_1,a_j\partial_{\nu}\phi_{k,\ell}^j\right\rangle_{L^2(\partial\Omega)}}{\lambda_k^j}$$ 
and, using this identity, we obtain the following asymptotic  property
\bel{t111e}\partial_\nu v_1^j(t,\cdot)=-\frac{t^{-1-\alpha_j}}{\Gamma(-\alpha_j)}\left(\int_0^{+\infty}\psi_1(s)ds\right)\partial_\nu w_j+\underset{t\to+\infty}{\mathcal O}(t^{-1-2\alpha_j}),\ee
where $w_j=A_j^{-1}G_j$ with $G_j$ the solution of \eqref{eqqq2}. Combining this asymptotic property of $\partial_\nu v_1^j(t,\cdot)$ as $t\to+\infty$ with condition \eqref{t1e}, with $k=1$, we will prove by contradiction that $\alpha_1=\alpha_2$.

Let us assume that $\alpha_1\neq\alpha_2$. From now on, without loss of generality we assume that $\alpha_1<\alpha_2$. Notice that \eqref{t1e}, for $k=1$, implies that
$$\pm a_1\partial_\nu v_1^1(t,x)=\pm a_2\partial_\nu v_1^2(t,x),\quad (t,x)\in(0,+\infty)\times\Gamma_{out}.$$
Combining this identity with the fact that $\chi\eta_1$ is of constant sign, by eventually replacing $\eta_1$ by $-\eta_1$,  we can  assume that the function $\chi\eta_1$  is non-positive.    From \eqref{t111e} and \eqref{t1e}, when $k=1$, we deduce that, for a.e. $x\in\Gamma_{out}$, we have
$$\begin{aligned}&-\frac{t^{-1-\alpha_1}}{\Gamma(-\alpha_1)}\left(\int_0^{+\infty}\psi_1(s)ds\right)a_1\partial_\nu w_1(x) +\underset{t\to+\infty}{\mathcal O}(t^{-1-2\alpha_1})\\
&=-\frac{t^{-1-\alpha_2}}{\Gamma(-\alpha_2)}\left(\int_0^{+\infty}\psi_1(s)ds\right)a_2\partial_\nu w_2(x)+\underset{t\to+\infty}{\mathcal O}(t^{-1-2\alpha_2}).\end{aligned}$$
Since $\psi_1\geq0$, and $\psi_1\not\equiv0$, we deduce that 
$$\int_0^{+\infty}\psi_1(s)ds>0$$
and, for a.e. $x\in\Gamma_{out}$, we obtain
\bel{t111f}\frac{t^{-1-\alpha_1}}{\Gamma(-\alpha_1)}a_1\partial_\nu w_1(x) +\underset{t\to+\infty}{\mathcal O}(t^{-1-2\alpha_1})
=\frac{t^{-1-\alpha_2}}{\Gamma(-\alpha_2)}a_2\partial_\nu w_2(x)+\underset{t\to+\infty}{\mathcal O}(t^{-1-2\alpha_2}).\ee
Using the fact that $\chi\eta_1\in W^{2-\frac{1}{r},r}(\partial\Omega)$ and applying \cite[Theorem 2.4.2.5]{Gr}, we obtain $G_j\in W^{2,r}(\Omega)$. Then, since $r>\frac{d}{2}$, the Sobolev embedding theorem implies that $G_j\in \mathcal C(\overline{\Omega})$.  Therefore, applying again \cite[Theorem 2.4.2.5]{Gr}, we deduce that $w_j\in W^{2,d+1}(\Omega)\subset\mathcal C^1(\overline{\Omega})$.
Since $\chi\eta_1\leq0$ and $\chi\eta_1\not\equiv0$, the maximum principle (see e.g. \cite[Corollary 3.2]{GT}) implies that, for $j=1,2$, $G_j\leq0$ and $G_j\not\equiv0$. Moreover, using the fact that $-\textrm{div}(a_j\nabla_x w_j)+q_jw_j=\rho_j G_j\leq 0$ and $w_j|_{\partial\Omega}=0$, the strong maximum principle (see e.g. \cite[Theorem 3.5]{GT}) implies that 
$$w_j(x)<0,\quad  x\in\Omega.$$
Thus, the Hopf lemma (see \cite[Lemma 3.4]{GT}) implies that $$\partial_\nu w_j(x)>0,\quad x\in\partial\Omega,\ j=1,2.$$ In particular, we have $\norm{a_j\partial_\nu w_j}_{L^2(\Gamma_{out})}>0$, $j=1,2$. Taking the norm $L^2(\Gamma_{out})$ on both sides of \eqref{t111f}, we get
\bel{t111g}\frac{t^{-1-\alpha_1}}{|\Gamma(-\alpha_1)|}\norm{a_1\partial_\nu w_1}_{L^2(\Gamma_{out})}
\leq\underset{t\to+\infty}{\mathcal O}(t^{-1-2\alpha_1})+\frac{t^{-1-\alpha_2}}{|\Gamma(-\alpha_2)|}\norm{a_2\partial_\nu w_2}_{L^2(\Gamma_{out})}+\underset{t\to+\infty}{\mathcal O}(t^{-1-2\alpha_2}),\ee
where this time $\mathcal O$ is considered in term of functions taking values in $\R$.
Assuming that $\alpha_j\neq1$, multiplying this expression by $|\Gamma(-\alpha_1)|t^{1+\alpha_1}$ and sending $t\to+\infty$, we get
$$\norm{a_1\partial_\nu w_1}_{L^2(\Gamma_{out})}\leq0.$$
This contradicts  the fact that $\norm{a_1\partial_\nu w_1}_{L^2(\Gamma_{out})}>0$  and it follows that $\alpha_1=\alpha_2$.
 On the other hand, if $\alpha_1=1$, combining the fact that
$$t^{-1-\alpha_2}=\underset{t\to+\infty}{o}(t^{-3}),$$
with \eqref{t111g} and the fact that $\norm{a_2\partial_\nu w_2}_{L^2(\Gamma_{out})}>0$, we deduce  that $\Gamma(-\alpha_2)^{-1}=0$ which implies that $\alpha_2=1$. In the same way, if $\alpha_2=1$ one can check that $\alpha_1=1$. This proves that in all case $\alpha_1=\alpha_2$

\textbf{Step 4.} From now on we fix $\alpha_1=\alpha_2=\alpha$ and  $\Omega=\Omega_1=\Omega_2=\tilde{\Omega}\setminus\overline{\omega_1}$ which, according to our assumptions, is a $\mathcal C^2$ open connected set. Note that, in this context, the fact that one of the conditions (i), (ii), (iii) is fulfilled implies that one of the following conditions
\bel{cond1}(i')\ \rho_1=\rho_2,\quad (ii')\ a_1=a_2,\quad (iii')\ q_1=q_2\ee
is fulfilled. In this step, we will show  that condition \eqref{t1e}, for $k\geq1$, implies that
\bel{t11e} \rho_1=\rho_2,\quad a_1=a_2,\quad q_1=q_2.\ee
For this purpose, we use the notation of the third step. Repeating the arguments used in Step 1, 2, 3 and 4 in the proof of \cite[Theorem 2.2]{KLLY}, we deduce that the condition  \eqref{t1e} for $k\in\mathbb N$
implies that the following conditions
\bel{t11f}\lambda^1_k=\lambda^2_k,\quad m^1_k=m_k^2,\quad k\in\mathbb N,\ee
\bel{t11g}\partial_\nu\phi^1_{k,\ell}(x)=\partial_\nu\phi^2_{k,\ell}(x),\quad k\in\mathbb N,\ \ell=1,\ldots,m^1_k,\ x\in\partial\tilde{\Omega}\ee
are fulfilled.
Combining \eqref{t11f}-\eqref{t11g} with \eqref{ob1}-\eqref{ob2}, we deduce that, for all $k\in\mathbb N$, $\ell=1,\ldots,m^1_k$, $\phi_{k,\ell}=\phi^1_{k,\ell}-\phi^2_{k,\ell}$ satisfies
\[\left\{\begin{aligned}-\textrm{div}\left(a_1 \nabla_x \phi_{k,\ell}\right)+q_1\phi_{k,\ell} -\lambda^1_k\rho_1\phi_{k,\ell}&=0\quad &\textrm{in }\tilde{O},\\   \phi_{k,\ell}=\partial_\nu \phi_{k,\ell}&=0,\quad &\textrm{on }\Gamma_{out}\cap \partial\tilde{O}.\end{aligned}\right.\]
Therefore, applying results of unique continuation for elliptic equations, we deduce that
$$\phi^1_{k,\ell}(x)-\phi^2_{k,\ell}(x)=\phi_{k,\ell}(x)=0,\quad k\in\mathbb N,\ \ell=1,\ldots,m^1_k,\  x\in\tilde{O}.$$
Combining this with the fact that $\partial\omega_1=\partial(\omega_1\cup\omega_2)\subset \partial\tilde{O}$, we deduce that
$$\partial_\nu\phi^1_{k,\ell}(x)=\partial_\nu\phi^2_{k,\ell}(x),\quad k\in\mathbb N,\ \ell=1,\ldots,m^1_k,\ x\in\partial\omega_1.$$
This last identity and \eqref{t11g} imply
$$\partial_\nu\phi^1_{k,\ell}(x)=\partial_\nu\phi^2_{k,\ell}(x),\quad k\in\mathbb N,\ \ell=1,\ldots,m^1_k,\ x\in\partial\Omega.$$
Combining this with \eqref{t11f} and  the fact that one of the conditions \eqref{cond1} is fulfilled, we are in position to apply the inverse spectral result of \cite[Corollaries 1.5–1.7]{CK2} in order to deduce that \eqref{t11e} holds true.

\textbf{Step 5.} In this last step we will complete the proof of the theorem by proving that condition \eqref{t1e} with $k=0$ implies
 that \bel{tt1a}u_0^1=u_0^2,\quad  f_1=f_2.\ee
Using the fact that $\Omega_1=\Omega_2=\Omega$, $\alpha_1=\alpha_2=\alpha$ and the fact that \eqref{t11e} is fulfilled, we deduce that, for $j=1,2$, $v_0^j$ solves the problem
$$\left\{ \begin{array}{lll} 
&(\rho_1(x)\partial_t^{\alpha}v_0^j -\textrm{div}\left(a_1(x) \nabla_x v_0^j \right)+q_1(x) v_0^j)(t,x)  =  F_j(t,x), & (t,x)\in  (0,+\infty) \times \Omega,\\
&v_0^j(t,x)  =  0, & (t,x) \in  (0,+\infty) \times \partial\Omega, \\  
&\pd_t^\ell v_0^j(0,\cdot)  =  u_\ell^j, & \mbox{in}\ \Omega,\ \ell=0,...,\lceil\alpha\rceil-1.
\end{array}
\right.$$
Fixing $F=F_1-F_2$, $u_0=u_0^1-u_0^2$, $u_1\equiv0$, we deduce that $v_0=v_0^1-v_0^2$ solves
\bel{eq5}\left\{ \begin{array}{lll} 
&(\rho_1(x)\partial_t^{\alpha}v_0 -\textrm{div}\left(a_1(x) \nabla_x v_0 \right)+q_1(x) v_0)(t,x)  =  F(t,x), & (t,x)\in  (0,+\infty) \times \Omega,\\
&v_0(t,x)  =  0, & (t,x) \in  (0,+\infty) \times \partial\Omega, \\  
&\pd_t^\ell v_0(0,\cdot)  =  u_\ell, & \mbox{in}\ \Omega,\ \ell=0,...,\lceil\alpha\rceil-1.
\end{array}
\right.\ee
Moreover,  condition \eqref{t1e} for $k=0$ implies that
$$\partial_\nu v_0(t,x)=0,\quad (t,x)\in(0,+\infty)\times\Gamma_{out}.$$
Without loss of generality and by eventually extending $\Omega$ into a larger connected open set, we may assume that
\bel{t1g} v_0(t,x)=0,\quad (t,x)\in(0,+\infty)\times\Omega'\ee
for $\Omega'$ an open subset of $\Omega$.  We will give the proof of this result both in the case where condition (v) and (vi) are fulfilled. Indeed, assuming that (iv) is fulfilled, one can deduce \eqref{tt1a} from \eqref{t1e} with $k=0$ by applying \cite[Theorem 2.5]{JLLY}.

Let us first assume that condition (v) of Theorem \ref{t1} is fulfilled. Recall that since $\Omega_1=\Omega_2=\Omega$, condition (v) implies that $u_0^1=u_0^2$. Then, $v=v_0^1-v_0^2$ solves \eqref{eq5} with $u_\ell\equiv0$,   $\ell=0,...,\lceil\alpha\rceil-1$ and $F(t,x)=\sigma(t) (f_1(x)-f_2(x))$. For all $p>0$ and for $V(p,\cdot)$ the Laplace transform in time of $v_0$ at $p$,   the conditions
$$\left\{ \begin{array}{lll} 
&\rho_1(x)p^{\alpha}V_0(p,x) -\textrm{div}\left(a_1(x) \nabla_x V_0 \right)(p,x)+q_1(x) V_0(p,x)  =  f(x)\int_0^Te^{-pt}\sigma(t)dt, & x\in   \Omega,\\
&V_0(p,x)  =  0, & x\in \pd \Omega, \\  
&V_0(p,x)=0, & x\in\Omega'
\end{array}
\right.$$
are fulfilled, with $f=f_1-f_2$. Since $\sigma\not\equiv0$ by the uniqueness and the analiticity of the Laplace transform in time of $\sigma$ extended by zero to $(0,+\infty)$, there exists $0<r_1<r_2$ such that
$$\int_0^Te^{-pt}\sigma(t)dt\neq0,\quad p\in (r_1,r_2).$$
Thus, fixing 
$$W(p,\cdot)=\frac{V_0(p^{\frac{1}{\alpha}},\cdot)}{\int_0^Te^{-p^{\frac{1}{\alpha}}t}\sigma(t)dt},\quad p\in (r_1^\alpha,r_2^\alpha),$$
we deduce that $W(p,\cdot)$ satisfies, for all $p\in (r_1^\alpha,r_2^\alpha)$, the conditions
$$\left\{ \begin{array}{lll} 
&\rho_1(x)pW(p,x) -\textrm{div}\left(a_1(x) \nabla_x W \right)(p,x)+q_1(x) W(p,x)  =  f(x), & x\in  \Omega,\\
&W(p,x)  =  0, & x\in \pd \Omega, \\  
&W(p,x)=0, & x\in\Omega'.
\end{array}
\right.$$
On the other hand, repeating the above arguments we deduce that for $w\in L^2(0,+\infty;H^1(\Omega))$ the solution of the parabolic problem
$$\left\{ \begin{array}{lll} 
&(\rho_1(x)\partial_tw -\textrm{div}\left(a_1(x) \nabla_x w\right)+q_1(x) w)(t,x)  =  0, & (t,x)\in  (0,+\infty) \times \Omega,\\
&w(t,x)  =  0, & (t,x) \in  (0,+\infty) \times \pd \Omega, \\  
&w(0,\cdot)  =  f, & \mbox{in}\ \Omega,
\end{array}
\right.$$
 $W(p,\cdot)$ coincides with the Laplace transform in time of $w$ at $p>0$, denoted by $\hat{w}(p,\cdot)$. Moreover, $p\mapsto \hat{w}(p,\cdot)$ is analytic in $(0,+\infty)$ as a function taking values in $L^2(\Omega)$ and the condition
$$\hat{w}(p,x)=W(p,x)=0, \quad p\in (r_1^\alpha,r_2^\alpha),\ x\in\Omega'$$
implies that 
$$\hat{w}(p,x)=0, \quad p>0,\ x\in\Omega'.$$
By the uniqueness of the Laplace transform in time of $w|_{(0,+\infty)\times\Omega'}$, we get \bel{t1aa}w(t,x)=0,\quad (t,x)\in(0,+\infty)\times\Omega'.\ee
The unique continuation results for parabolic equations (e.g.  \cite[Theorem 1.1]{SS}) imply that $w\equiv0$ which implies that $f\equiv0$. Therefore, we have $f_1=f_2$ from which we get \eqref{tt1a}.

Finally, let us assume that condition (vi) is fulfilled. Consider the solution of the following initial boundary value problems
\begin{equation}\label{eq51}\begin{cases} 
(\rho_1(x)\partial_t^{\alpha}v_{0,1}^j -\textrm{div}\left(a_1(x) \nabla_x v_{0,1}^j \right)+q_1(x) v_{0,1}^j)(t,x)  =  0, & (t,x)\in  (0,+\infty) \times \Omega,\\
v_{0,1}^j(t,x)  =  0, & (t,x) \in  (0,+\infty) \times \partial\Omega, \\  
\begin{cases}
v_{0,1}^j=u_0^j & \mbox{if }0<\alpha\leq1,\\
v_{0,1}^j=u_0^j,\quad \partial_t v_{0,1}^j=0 & \mbox{if }1<\alpha<2
\end{cases} & \mbox{in }\{0\}\times \Omega,
\end{cases}
\end{equation}
\bel{eq52}\left\{ \begin{array}{lll} 
&(\rho_1(x)\partial_t^{\alpha}v_{0,2}^j -\textrm{div}\left(a_1(x) \nabla_x v_{0,2}^j \right)+q_1(x) v_{0,2}^j)(t,x)  =  \sigma(t)f_j(x), & (t,x)\in  (0,+\infty) \times \Omega,\\
&v_{0,2}^j(t,x)  =  0, & (t,x) \in  (0,+\infty) \times \partial\Omega, \\  
&\pd_t^\ell v_{0,2}^j(0,\cdot)  =  0, & \mbox{in}\ \Omega,\ \ell=0,...,\lceil\alpha\rceil-1.
\end{array}
\right.\ee
Note that $v_0^j=v_{0,1}^j+v_{0,2}^j$, $j=1,2$. Moreover, in view of condition (vi), the restriction of $v_{0,2}^j$ to $(0,\tau_0)\times\Omega$ solves
$$\left\{ \begin{array}{lll} 
&(\rho_1(x)\partial_t^{\alpha}v_{0,2}^j -\textrm{div}\left(a_1(x) \nabla_x v_{0,2}^j \right)+q_1(x) v_{0,2}^j)(t,x)  =  0, & (t,x)\in  (0,\tau_0) \times \Omega,\\
&v_{0,2}^j(t,x)  =  0, & (t,x) \in  (0,\tau_0) \times \partial\Omega, \\  
&\pd_t^\ell v_{0,2}^j(0,\cdot)  =  0, & \mbox{in}\ \Omega,\ \ell=0,...,\lceil\alpha\rceil-1.
\end{array}
\right.$$
Therefore, the uniqueness of this initial boundary value problem implies that $v_{0,2}^1=v_{0,2}^2=0$ on $(0,\tau_0)\times\Omega$. Therefore, we have $v_0^j=v_{0,1}^j$ on $(0,\tau_0)\times\Omega$, $j=1,2$. Thus, condition \eqref{t1e}, with $k=0$, implies that 
$$\partial_\nu v_{0,1}^1(t,x)=\partial_\nu v_{0,1}^2(t,x),\quad (t,x)\in(0,\tau_0)\times\Gamma_{out}.$$
Then, applying  \cite[Theorem 2.5]{JLLY}, we deduce that $u_0^1=u_0^2$. This implies that $v_{0,1}^1=v_{0,1}^2$ on $(0,+\infty)\times\Omega$. It follows that $v_{0}^j=v_{0,1}^1+v_{0,2}^j$, $j=1,2$, and condition \eqref{t1e}, with $k=0$, implies that
$$\partial_\nu v_{0,2}^1(t,x)=\partial_\nu v_{0,2}^2(t,x),\quad (t,x)\in(0,+\infty)\times\Gamma_{out}.$$
Repeating the arguments used for proving \eqref{tt1a} when (v) is fulfilled, we get $f_1=f_2$.
This proves that \eqref{tt1a} holds true and it completes the proof of the theorem.

\section{ Proof of Theorem \ref{tt2}} 

We fix $u^j$, $j=1,2$, the solution of \eqref{eq1} with $\Phi$ given by \eqref{g}, $(a,\rho,q)=(a_j,\rho_j,q_j)$ and $(u_0,F)=(u_0^j,F_j)$. According to Theorem \ref{t1}, the proof of the theorem will be completed if we show that, for any values of $T_0\in[\tau_2,T]$ and of $\delta\in(0,T_0-\tau_1)$, the condition \eqref{t6a} implies \eqref{t1d}. For this purpose, we fix $T_0\in[\tau_2,T]$, $\delta\in(0,T_0-\tau_1)$ and we assume that \eqref{t6a} is fulfilled.
We set $u=u^1-u^2$, where we recall that $u^j\in  W^{\lceil\alpha\rceil,1}(0,T;H^{\frac{7}{4}}(\Omega_j))\cap L^1(0,T;H^{2+\frac{7}{4}}(\Omega_j))$,  $j=1,2$$^1$\footnote{$^1$ See the discussion before the statement of Theorem \ref{tt2}.}. We remark that $u$ satisfies the following conditions
$$\left\{ \begin{array}{lll} 
&(\rho_1(x)\partial_t^{\alpha}u -\textrm{div}\left(a_1(x) \nabla_x u \right)+q_1(x) u)(t,x)  =  G(t,x)+F(t,x), & (t,x)\in  (0,T) \times (\Omega_1\cap\Omega_2),\\
&u(t,x)  =  0, & (t,x) \in  (0,T) \times \partial\tilde{\Omega}, \\  
&\pd_t^\ell u(0,\cdot)  =  u_\ell, & \mbox{in}\ \Omega_1\cap\Omega_2,\ \ell=0,...,\lceil\alpha\rceil-1.
\end{array}
\right.$$
In the above equation we set $F=F_1-F_2$, $u_0=u_0^1-u_0^2$, $u_1\equiv 0$ and
$$G=(\rho_2-\rho_1)\partial_t^\alpha u^2-\textrm{div}\left((a_2-a_1) \nabla_x u^2 \right)+(q_2-q_1) u^2\in L^1(0,T;H^s(\Omega_1\cap\Omega_2)).$$
Since supp$(\sigma)\subset(0,\tau_1)$ and $(0,\tau_1)\cap (T_0-\delta,T_0)=\emptyset$, we deduce that
$$ F(t,x)=0,\quad (t,x)\in (T_0-\delta,T_0)\times(\Omega_1\cap\Omega_2)$$
and it follows that
$$(\rho_1(x)\partial_t^{\alpha}u -\textrm{div}\left(a_1(x) \nabla_x u \right)+q_1(x) u)(t,x)  =  G(t,x),\quad (t,x)\in (T_0-\delta,T_0)\times(\Omega_1\cap\Omega_2).$$
Using the fact that $u^j\in   W^{\lceil\alpha\rceil,1}(0,T;H^{\frac{7}{4}}(\Omega_1\cap\Omega_2))\cap L^1(0,T;H^{2+\frac{7}{4}}(\Omega_1\cap\Omega_2))$,  $j=1,2$, we can apply the normal trace to the above equation in order to obtain
\bel{t6f}\partial_\nu (\rho_1\partial_t^{\alpha}u -\textrm{div}\left(a_1 \nabla_x u \right)+q_1 u)(t,x)=\partial_\nu G(t,x),\quad (t,x)\in (T_0-\delta,T_0)\times\Gamma_{out}.\ee
Since $u=u^1-u^2\in   W^{\lceil\alpha\rceil,1}(0,T;H^{\frac{7}{4}}(\Omega_1\cap\Omega_2))$, we deduce that $\partial_\nu u\in W^{\lceil\alpha\rceil,1}(0,T;L^2(\partial\tilde{\Omega}))$. Combining this with the fact that $u=0$ on $(0,T)\times\partial\tilde{\Omega}$, we deduce that
$$\partial_\nu (\rho_1\partial_t^{\alpha}u)(t,x)=\rho_1(x)\partial_t^{\alpha}\partial_\nu u(t,x),\quad (t,x)\in (T_0-\delta,T_0)\times\Gamma_{out}.$$
In the same way, condition \eqref{t6e} and \eqref{t6a}  imply that
$$\partial_\nu u(t,x)=\partial_\nu\left(\textrm{div}\left(a_1 \nabla_x u \right)\right)(t,x)=0,\quad (t,x)\in(T_0-\delta,T_0)\times\Gamma_{out}.$$ 
Thus, we find
$$\partial_\nu (q_1 u)(t,x)=u(t,x)\partial_\nu q_1(x)+q_1(x)\partial_\nu u(t,x)=0, \quad (t,x)\in (T_0-\delta,T_0)\times\Gamma_{out}.$$
It follows that
\bel{t6g}\partial_\nu (\rho_1\partial_t^{\alpha}u -\textrm{div}\left(a_1 \nabla_x u \right)+q_1 u)(t,x)=\rho_1(x)\partial_t^{\alpha}\partial_\nu u(t,x),\quad (t,x)\in (T_0-\delta,T_0)\times\Gamma_{out}.\ee
On the other hand, applying \eqref{t6d}-\eqref{t6e}, we deduce that
\bel{t6h}\begin{aligned}&\partial_\nu G(t,x)\\
&=\partial_\nu\left[(\rho_2-\rho_1)\partial_t^\alpha u^2-\left(\textrm{div}\left(a_2 \nabla_x u^2 \right)-\textrm{div}\left(a_1 \nabla_x u^2 \right)\right)+(q_2-q_1) u^2\right](t,x)\\
&=0,\quad (t,x)\in (T_0-\delta,T_0)\times\Gamma_{out}.\end{aligned}\ee
Combining this with \eqref{t6a} and \eqref{t6f}-\eqref{t6h}, we deduce that
\bel{tt6h}\partial_\nu u(t,x)=\partial_t^{\alpha}\partial_\nu u(t,x)=0,\quad (t,x)\in (T_0-\delta,T_0)\times\Gamma_{out}.\ee
Now let us fix $\phi\in \mathcal C^\infty_0(\Gamma_{out})$  and consider the function 
$$v_\phi(t):=\left\langle \partial_\nu  u(t,\cdot),\phi\right\rangle_{L^2(\partial\tilde{\Omega})}.$$
Using the fact that $\partial_\nu u\in W^{\lceil\alpha\rceil,1}(0,T;L^2(\partial\tilde{\Omega}))$, we deduce that $v_\phi\in W^{\lceil\alpha\rceil,1}(0,T)$ and condition \eqref{tt6h} implies
$$v_\phi(t)=\partial_t^{\alpha}v_\phi(t)=0,\quad t\in (T_0-\delta,T_0).$$
Thus, applying \cite[Theorem 1]{KJ}, we deduce that
$$\left\langle \partial_\nu  u(t,\cdot),\phi\right\rangle_{L^2(\partial\tilde{\Omega})}=v_\phi(t)=0,\quad t\in (0,T_0).$$
Since in the above identity $\phi\in \mathcal C^\infty_0(\Gamma_{out})$ is arbitrary chosen, we obtain
$$\partial_\nu u(t,x)=0,\quad (t,x)\in (0,T_0)\times\Gamma_{out}$$
which implies \eqref{t1d}, since $T_0\geq\tau_2$. Therefore, applying Theorem \ref{t1}, we deduce  that \eqref{t1aaa} is fulfilled.\qed

\section{Proof of Theorem \ref{t2}}  Repeating the arguments used in the proof of Theorem \ref{t1} combined with the time analyticity properties of solutions of \eqref{eq2}-\eqref{eq3} stated in Theorem \ref{t4}, we deduce that \eqref{t2a} implies that, for all $k\in\mathbb N\cup\{0\}$, we have
\bel{t2c}\partial_\nu v_k^{1}(t,x)=\partial_\nu v_k^{2}(t,x),\quad (t,x)\in(0,+\infty)\times \partial\tilde{\Omega},\ee
with $v_0^j$ the solution of \eqref{eq2} for $a\equiv1$, $q\equiv0$, $\omega=\omega_j$, $\Omega=\Omega_j$ and $(\alpha,\rho,B, u_0,u_{1},F)=(\alpha_j,\rho_j,B_j,u_0^j,0,F_j)$, $j=1,2$, and $v_k^{j}$, $j=1,2$, $k\in\mathbb N$, the solution of \eqref{eq3} for $a\equiv1$, $q\equiv0$ and $(\alpha,\rho,B)=(\alpha_j,\rho_j,B_j)$, $j=1,2$. We mention, that in the case $\omega_1=\omega_2=\emptyset$, the result of Theorem \ref{t2} can be deduced from \eqref{t2c} by combining the results of \cite[Theorem 2.3]{KLLY}  with some arguments similar to those used in Theorem \ref{t1}. For this purpose, from now on we  assume that $\alpha_1=\alpha_2=\alpha$ and we will prove that \eqref{t2c} implies \eqref{t2aa}. The proof of Theorem \ref{t2} will be decomposed into three steps.  First, applying \eqref{t2c} with  $k=1$ and exploiting condition \eqref{ob1}, \eqref{ob3}, we deduce that $\omega_1=\omega_2$. Then, applying \eqref{t2c} with $k\in\mathbb N$ we will deduce that
\bel{t2d}B_1=B_2,\quad \rho_1=\rho_2.\ee
Finally, combining all these results and applying \eqref{t2c} with $k=0$ we deduce that
\bel{t2e}u_0^1=u_0^2,\quad f_1=f_2.\ee

\textbf{Step 1.} In this step,  we will show that $\omega_1=\omega_2$. For this purpose let us assume the contrary.  For $p>p_0$ let us fix $V_1^j(p,\cdot)$ the Laplace transform in time of $v_1^j$ at $p$. From the definition of weak solution of \eqref{eq2} for $q\equiv0$, $a\equiv1$, $\omega=\omega_j$ and $(B,\rho, u_0,u_{1},F)=(B_j,\rho_j,u_0^j,0,F_j)$, $j=1,2$, we deduce that, for all $p>p_0$, $V_1^j(p,\cdot)$ solves
\bel{t11111a}\left\{\begin{aligned}-\Delta_x V_1^j(p,\cdot) +B_j\cdot\nabla_xV_1^j(p,\cdot) +\rho_jp^\alpha V_1^j(p,\cdot)&=0,\quad &\textrm{in }\Omega_j,\\   V_1^j(p,\cdot)&=\hat{\psi}_1(p)\chi\eta_1,\quad &\textrm{on }\partial\tilde{\Omega},\\ V_1^j(p,\cdot)&=0,\quad &\textrm{on }\partial\omega_j.\end{aligned}\right.\ee
Moreover, following the representation of solutions of problem \eqref{eq2} given in Theorem \ref{t5}, one can check that, for $s\in(3/2,2)$ and for all $p>p_0$, we have $t\mapsto e^{-pt}v_1^j\in L^1(\R_+;H^s(\Omega_j))$, $j=1,2$. Here $p_0>0$ can be chosen sufficiently large. Therefore, we can apply the Laplace transform in time to the identity \eqref{t2c}, with $k=1$, in order to get
\bel{t2f}\partial_\nu V_1^1(p,x)=\partial_\nu V_1^2(p,x),\quad p>p_0,\ x\in\partial\tilde{\Omega}.\ee
 Combining \eqref{t11111a}-\eqref{t2f}  with \eqref{ob1} and \eqref{ob3}, we deduce that the restriction of $V_1(p,\cdot)=V_1^1(p,\cdot)-V_1^2(p,\cdot)$ to $\tilde{O}$ satisfies, for all $p>p_0$, the conditions
\[\left\{\begin{aligned}-\Delta_x V_1(p,\cdot) +B_1\cdot\nabla_xV_1(p,\cdot) +\rho_1p^\alpha V_1(p,\cdot)&=0\quad &\textrm{in }\tilde{O},\\   V_1(p,\cdot)=\partial_\nu V_1(p,\cdot)&=0,\quad &\textrm{on }\partial\tilde{\Omega}\cap \partial\tilde{O}.\end{aligned}\right.\]
Therefore, applying results of unique continuation for elliptic equations, we deduce that
$$V_1(p,x)=0,\quad p>p_0,\ x\in\tilde{O}.$$
Using the fact that $\partial(\omega_1\cup\omega_2)\subset\partial\tilde{O}$, we deduce from this identity that
\bel{ppp} V_1^1(p,x)= V_1(p,x)=0,\quad p>p_0,\ x\in(\partial\omega_2)\setminus(\partial\omega_1).\ee
Moreover, since $\chi\eta_1\in W^{2-\frac{1}{r},r}(\partial\Omega)$  and $V_1^1(p,\cdot)$, $p>p_0$, solves \eqref{t11111a}, \cite[Theorem 2.4.2.5]{Gr} implies that $V_1^1(p,\cdot)\in W^{2,r}(\Omega_1)$. Therefore, fixing $\omega_*=\omega_2\setminus\overline{\omega_1}$ and repeating the arguments used in the proof of Step 2 of Theorem \ref{t1}, we deduce that, for all $p>p_0$, the restriction of $V_1^1(p,\cdot)$ to $\omega_*$ is lying in $H^1_0(\omega_*)$ and it  satisfies
\bel{t2g} -\Delta_x V_1(p,\cdot) +B_1\cdot\nabla_xV_1(p,\cdot) +\rho_1p^\alpha V_1(p,\cdot)=0\quad \textrm{in }\omega_*.\ee
It follows that for any $\phi\in\mathcal C^\infty_0(\omega_*)$ we have
$$\begin{aligned}0&=\left\langle -\Delta V_1(p,\cdot),\phi\right\rangle_{D'(\omega_*),\mathcal C^\infty_0(\omega_*)} +\left\langle B_1\cdot\nabla_xV_1(p,\cdot) +\rho_1p^\alpha V_1(p,\cdot),\phi\right\rangle_{D'(\omega_*),\mathcal C^\infty_0(\omega_*)}\\
&=\left\langle \nabla_x V_1(p,\cdot),\nabla_x\phi\right\rangle_{D'(\omega_*)^d,\mathcal C^\infty_0(\omega_*)^d} +\left\langle B_1\cdot\nabla_xV_1(p,\cdot) +\rho_1p^\alpha V_1(p,\cdot),\phi\right\rangle_{D'(\omega_*),\mathcal C^\infty_0(\omega_*)}\\
&=\left\langle \nabla_x V_1(p,\cdot),\nabla_x\phi\right\rangle_{L^2(\omega_*)^d} +\left\langle B_1\cdot\nabla_xV_1(p,\cdot) +\rho_1p^\alpha V_1(p,\cdot),\phi\right\rangle_{L^2(\omega_*)}.\end{aligned}$$
By density, we can extend this identity to any $\phi\in H^1_0(\omega_*)$ and chosing $\phi=V_1(p,\cdot)$, we obtain
$$\norm{\nabla_x V_1(p,\cdot)}_{L^2(\omega_*)^d}^2 +\left\langle B_1\cdot\nabla_xV_1(p,\cdot) +\rho_1p^\alpha V_1(p,\cdot),V_1(p,\cdot)\right\rangle_{L^2(\omega_*)}=0,\quad p>p_0.$$
On the other hand, applying \eqref{eq-rho}, we obtain
$$\begin{aligned}0=&\norm{\nabla_x V_1(p,\cdot)}_{L^2(\omega_*)^d}^2 +\left\langle B_1\cdot\nabla_xV_1(p,\cdot) +\rho_1p^\alpha V_1(p,\cdot),V_1(p,\cdot)\right\rangle_{L^2(\omega_*)}\\
&\geq \frac{\norm{\nabla_x V_1(p,\cdot)}_{L^2(\omega_*)^d}^2}{2} +\left(p^\alpha \rho_0-\norm{ B_1}_{L^\infty(\omega_*)}^2\right)\norm{ V_1(p,\cdot)}_{L^2(\omega_*)}^2.\end{aligned}$$
Choosing $p>p_1:=p_0+\rho_0^{-\frac{1}{\alpha}}(\norm{ B_1}^2_{L^\infty(\omega_*)}+1)^{\frac{1}{\alpha}}$, we obtain
$$\norm{ V_1(p,\cdot)}_{L^2(\omega_*)}=0$$
 which implies that
$$V_1^1(p,x)=0,\quad p>p_1,\ x\in\omega_*.$$
Combining this with results of unique continuation for elliptic equations we deduce that, for all $p>p_1$, $V_1^1(p,\cdot)=0$ on $\Omega_1$. Moreover, using the fact that $\psi_1$ is non-negative and $\psi_1\not\equiv0$, we deduce that $\hat{\psi}_1(p_1+1)>0$ and it follows
$$\chi\eta_1(x)=\frac{V_1^1(p_1+1,x)}{\hat{\psi}_1(p_1+1)}=0,\quad x\in\partial\tilde{\Omega}.$$
This contradicts the fact that $\chi\eta_1\not\equiv0$. Therefore, we have $\omega_1=\omega_2$.

\textbf{Step 2.} From now on we assume that $\omega_1=\omega_2=\omega$ and we set $\Omega_1=\Omega_2=\Omega$. In this step, we will show that \eqref{t2d} is fulfilled.  For $j=1,2$ and $p>p_0$, consider the boundary value problem
\bel{t2i} \left\{\begin{aligned}-\Delta_x V_j(p,\cdot) +B_j\cdot\nabla_xV_j(p,\cdot) +\rho_jp^{\alpha} V_j(p,\cdot)&=0\quad &\textrm{in }\Omega,\\     V_j(p,\cdot)&=h,\quad &\textrm{on }\partial\tilde{\Omega},\\
     V_j(p,\cdot)&=0,\quad &\textrm{on }\partial\omega_1.\end{aligned}\right.\ee
In light of \cite[Theorem 8.3]{GT}, for $h\in H^{\frac{1}{2}}(\partial\tilde{\Omega})$  this problem admits a unique solution $V^j(p,\cdot)\in H^1(\Omega)$ and we can associate this problem with the partial Dirichlet-to-Neumann map
\[
\mathcal N_j(p):H^{\frac{1}{2}}(\partial\tilde{\Omega})\ni h\longmapsto\partial_\nu V_j(p,\cdot)_{|\partial\tilde{\Omega}}\in H^{-\frac{1}{2}}(\partial\tilde{\Omega}),\quad j=1,2,\ p>p_0.
\]
In a similar manner to the proof of \cite[Theorem 2.3]{KLLY}, we can prove that \eqref{t2c}, with $k\in\mathbb N$, imply
\begin{equation}\label{t2j}
\mathcal N_1(p)=\mathcal N_2(p),\quad p>p_0.
\end{equation}
Applying \cite[Theorem 1.9]{Sa} and condition \eqref{ob1}, \eqref{ob3}, we deduce from this identity that $B_1=B_2$ on $\partial\Omega$. We consider $U$ an open ball containing $\overline{\tilde{\Omega}}$ and we extend $B_j$ into a function still denoted by $B_j$ lying in $\mathcal C^\gamma(U;\R^d)$ with $B_1=B_2$ on $U\setminus\Omega$. Then, we consider the following boundary value problem 
$$\left\{\begin{aligned}-\Delta_x W_j(p,\cdot) +B_j\cdot\nabla_xW_j(p,\cdot) +\rho_j\mathds{1}_{\Omega}p^\alpha W_j(p,\cdot)&=0\quad &\textrm{in }U\setminus\omega_1,\\     W_j(p,\cdot)&=h,\quad &\textrm{on }\partial U,\\
 W_j(p,\cdot)&=0,\quad &\textrm{on }\partial\omega_1\end{aligned}\right.$$
and its associated partial Dirichlet-to-Neumann map
\[
\tilde{\mathcal N}_j(p):H^{\frac{1}{2}}(\partial U)\ni h\longmapsto\partial_\nu W_j(p,\cdot)_{|\partial U}\in H^{-\frac{1}{2}}(\partial U),\quad j=1,2,\ p>p_0.
\]
Following the proof of \cite[Lemma 6.2]{Po}, one can check that \eqref{t2j} implies that 
\begin{equation}\label{t2k}
\tilde{\mathcal N}_1(p)=\tilde{\mathcal N}_2(p),\quad p>p_0.
\end{equation}
Combining this with condition \eqref{ob3}, \cite[Proposition 2.1]{Po}, the proof of \cite[Theorem 1.1]{Po} and  density arguments comparable to the ones used in \cite[Theorem 1.3.]{Ki5} (see also \cite{AU}), we deduce that for $B=B_1-B_2$ extended by zero to $\R^3$ there exists $\phi\in W^{1,\infty}(\R^3)$ such that $B=\nabla_x \phi$ on $\R^3$ and 
\bel{t2l}|B_1|^2-\frac{\textrm{div}(B_1)}{2}+\rho_1p^\alpha=|B_2|^2-\frac{\textrm{div}(B_2)}{2}+\rho_2p^\alpha,\quad p>p_0.\ee
Using the fact that $B=0$ on  $(\R^3\setminus\overline{\tilde{\Omega}})\cup \overline{\tilde{O}}\cup\omega_1$ which is connected, by subtracting a constant to $\phi$ we may assume that $\phi=0$ on $(\R^3\setminus\tilde{\Omega})\cup \overline{\tilde{O}}\cup\omega_1$. Combining this with the fact that $\partial\Omega=\partial\tilde{\Omega}\cup\partial\omega_1$, we obtain $\phi\in H^1_0(\Omega)$. On the other hand, we get from \eqref{t2l} that $\phi$ satisfies
$$-\Delta\phi +2(B_1+B_2)\cdot\nabla\phi=2p^\alpha (\rho_2-\rho_1),\quad p>p_0.$$
Dividing this expression by $p^\alpha$ and sending $p\to+\infty$, we find $\rho_1=\rho_2$. Then, it follows that $\phi\in H^1_0(\Omega)$ satisfies $-\Delta\phi +2(B_1+B_2)\cdot\nabla\phi=0$ on $\Omega$ which combined with  \cite[Theorem 8.3]{GT} implies that $\phi\equiv 0$. Therefore, we have $B_1=B_2$ which implies \eqref{t2d}. 

\textbf{Step 3.} In this step we will complete the proof of the theorem by showing that \eqref{t2c} and \eqref{t2d} imply \eqref{t2e}. For this purpose, we fix  $v_0=v_0^1-v_0^2$, the condition \eqref{t2c} for $k=0$ implies that
$$\partial_\nu v_0(t,x)=0,\quad (t,x)\in(0,+\infty)\times\partial\tilde{\Omega}.$$
Without loss of generality and by eventually extending $\Omega$ into a larger connected open set, we may assume that \eqref{t1g} is fulfilled.
We will complete the proof of the theorem by showing  that \eqref{t2c} implies that \eqref{tt1a} holds true. Like in Theorem \ref{t1}, we will give the proof of this result both in the case where condition (v) and (vi) are fulfilled and we refer to \cite[Theorem 2.5]{JLLY} for the proof of this result when (iv) is fulfilled.

Let us first assume that condition (v) of Theorem \ref{t1} is fulfilled.  In a similar way to Step 5 of Theorem \ref{t1}, we can find $p_0<r_1<r_2$ such that
$$\int_0^Te^{-pt}\sigma(t)dt\neq0,\quad p\in (r_1,r_2).$$
Here $p_0>0$ corresponds to the value appearing in the Definition \ref{d1} of weak solution of \eqref{eq0}. Without loss of generality, we refer to $p_0$ as the maximum of the  value appearing in the Definition \ref{d1} for solutions of problem \eqref{eq0} for $\alpha=1$ and for the  value $\alpha$ of Theorem \ref{t2}.
Then, in a similar way to the proof of Theorem \ref{t1}, fixing  $V_0(p,\cdot)$, $p>p_0$, the Laplace transform in time of $v_0$, and $$W(p,\cdot)=\frac{V_0(p^{\frac{1}{\alpha}},\cdot)}{\int_0^Te^{-p^{\frac{1}{\alpha}}t}\sigma(t)dt},\quad p\in (r_1^\alpha,r_2^\alpha),$$
we deduce that $W(p,\cdot)$ satisfies, for all $p\in (r_1^\alpha,r_2^\alpha)$, the conditions
$$\left\{ \begin{array}{lll} 
&\rho_1(x)pW(p,x) -\Delta W(p,x)+B_1\cdot\nabla_x W(p,x)  =  f(x), & x\in  \Omega,\\
&W(p,x)  =  0, & x\in \pd \Omega, \\  
&W(p,x)=0, & x\in\Omega'.
\end{array}
\right.$$
Then, in a similar way to the last step of the proof of Theorem \ref{t1}, for $w\in L^2_{loc}(0,+\infty;H^1(\Omega))$ the solution of the parabolic problem
$$\left\{ \begin{array}{lll} 
&(\rho_1(x)\partial_tw -\Delta w+B_1\cdot\nabla_x w)(t,x)  =  0, & (t,x)\in  (0,+\infty) \times \Omega,\\
&w(t,x)  =  0, & (t,x) \in  (0,+\infty) \times \partial\tilde{\Omega}, \\  
&w(0,\cdot)  =  f, & \mbox{in}\ \Omega,
\end{array}
\right.$$
 we deduce that $W(p,\cdot)$ coincides with the Laplace transform in time of $w$ at $p>p_0$, denoted by $\hat{w}(p,\cdot)$.
Then, from the fact  that 
$$V_0(p,x)=0,\quad  x\in\Omega',\ p>p_0$$
and the analyticity of $p\mapsto\hat{w}(p,\cdot)$, $p>p_0$, we deduce that
$$w(t,x)=0,\quad (t,x)\in(0,+\infty)\times\Omega'.$$
Combining this with a unique continuation argument similar to the one used in the proof of Theorem \ref{t1},  we deduce that $f_1=f_2$. This proves that \eqref{tt1a} holds true when condition (v) is fulfilled. In the same way, assuming that condition (vi) is fulfilled, we deduce that \eqref{t2e} implies that $u_0^1=u_0^2$, $f_1=f_2$ and that \eqref{tt1a} holds true. This completes the proof of the theorem.\qed

\section{Proof of Corollary \ref{c1}, \ref{cc1} and \ref{c2}}

This section is devoted to the proof of Corollary \ref{c1}, \ref{cc1}. We will omit the proof of  Corollary \ref{c2} which can be deduced from some arguments used in Corollary \ref{c1} and the arguments used in \cite[Corollary 2.7]{KLLY}. We start with Corollary \ref{c1}.

\ \\
\textbf{Proof of Corollary \ref{c1}.} Let us first consider, for $k\in\mathbb N$, the initial boundary value problems
\begin{equation}\label{eq111}
\begin{cases}
\partial_t^{\alpha_j}v_k^j -\Delta_{g_j}v_k^j+q_j(x) v_k^j =  0, & \mbox{in }(0,+\infty)\times M_j,\\
v_k^j(t,x)= d_k\psi_k(t)\chi(x) \eta_k(x), & (t,x)\in (0,+\infty)\times\partial M_1, \\
\begin{cases}
v_k^j=0 & \mbox{if }0<\alpha\leq1,\\
v_k^j=\partial_t v_k^j=0 & \mbox{if }1<\alpha<2,
\end{cases} & \mbox{in }\{0\}\times M_j,
\end{cases}
\end{equation} 
\begin{equation}\label{eq112}
\begin{cases}
\partial_t^{\alpha_j}v_0^j -\Delta_{g_j}v_0^j+q_j(x) v_0^j =  \sigma(t)f_j(x), & \mbox{in }(0,+\infty)\times M_j,\\
v_0^j(t,x)= 0 & (t,x)\in (0,+\infty)\times\partial M_1, \\
\begin{cases}
v_0^j=u_0^j & \mbox{if }0<\alpha\leq1,\\
v_0^j=u_0^j,\ \partial_t v_0^j=0 & \mbox{if }1<\alpha<2,
\end{cases} & \mbox{in }\{0\}\times M_j.
\end{cases}
\end{equation} 
Following the argumentation of Theorem \ref{t1}, we deduce that the condition \eqref{t4d} implies that, for all $k\in\mathbb N\cup\{0\}$, we have
\bel{c1d}\partial_{\nu}v_k^1(t,x)=\partial_{\nu}v_k^2(t,x),\quad (t,x)\in(0,+\infty)\times\Gamma_{out}.\ee
In a similar way to Theorem \ref{t1}, we will show that condition \eqref{c1d}, with $k\in\mathbb N$, implies that $(M_1,g_1)$ and $(M_2,g_2)$ are isometric and \eqref{mu_c} holds true. For this purpose, let us start by proving that the condition \eqref{c1d} with $k=1$ implies that $\alpha_1=\alpha_2$. To do so, we will proceed in a similar way to the Step 3 in the proof of Theorem \ref{t1}. Let us consider the operator $A_j$, $j=1,2$, acting on $L^2(M_j)$ with domain $D(A_j)=H^2(M_j)\cap H^1_0(M_j)$ defined  by
\[
A_j w:=-\Delta_{g_j}w+q_j w,\quad w\in D(A_j).
\]
We fix $\{\lambda^j_k\}_{k\in\mathbb N}$ and $m^j_k\in\mathbb N$  the strictly increasing sequence of the eigenvalues of $A_j$ and the algebraic multiplicity of $\lambda^j_k$, respectively. For each eigenvalue $\lambda^j_k$, we introduce a family $\{\phi^j_{k,\ell}\}_{\ell=1}^{m^j_k}$ of eigenfunctions of $A_j$, i.e.,
\[
A_j\phi^j_{k,\ell}=\lambda^j_k\phi^j_{k,\ell},\quad\ell=1,\ldots,m^j_k,
\]
which forms an orthonormal basis in $L^2(M_j)$ of the algebraic eigenspace of $A_j$ associated with $\lambda^j_k$. Following  the arguments used in the Step 3 of the proof of Theorem \ref{t1}, one can check that we have
\bel{c1e}\partial_\nu v_1^j(t,\cdot)=-\frac{t^{-1-\alpha_j}}{\Gamma(-\alpha_j)}\left(\int_0^{+\infty}\psi_1(s)ds\right)\partial_\nu w_j+\underset{t\to+\infty}{\mathcal O}(t^{-1-2\alpha_j}),\ee
where $w_j=A_j^{-1}G_j$ with $G_j$ the solution of 
\bel{c1f}
\left\{ \begin{array}{rcll} 
-\Delta_{g_j} G_j+q_j G_j & = & 0, & \textrm{in } M_j,\\
 G_j(x) & = &  \chi\eta_1(x), & x\in\pd M_j.
\end{array}
\right.
\ee
Combining the asymptotic property \eqref{c1e} of $\partial_\nu v_1^j(t,\cdot)$ as $t\to+\infty$ with condition \eqref{c1d}, with $k=1$, we will prove by contradiction that $\alpha_1=\alpha_2$.

Let us assume that $\alpha_1\neq\alpha_2$. From now on, without loss of generality we assume that $\alpha_1<\alpha_2$. In a similar way to Theorem \ref{t1}, without loss of generality  we can  assume that the function $\chi\eta_1$  is non-positive.  Since $\chi\eta_1 \in \mathcal C^3(\partial M_1)$, we deduce that $G_j\in \mathcal C^2(M_j)$. Since $\chi\eta_1\leq0$ and $\chi\eta_1\not\equiv0$, the maximum principle  stated on the manifold $M_j$ (see e.g. \cite[Theorem 9.3]{PS}) implies that, for $j=1,2$, $G_j\leq0$ and $G_j\not\equiv0$. Moreover, using the fact that $-\Delta_{g_j} w_j+q_jw_j= G_j\leq 0$ and $w_j|_{\partial M_j}=0$, the strong maximum principle (see e.g. \cite[Theorem 9.3]{PS}) implies that 
$$w_j(x)<0,\quad  x\in M_j\setminus\partial M_j.$$
Thus, the Hopf lemma applied to the manifold $(M_j,g_j)$ (see \cite[Lemma 3.1.]{ACT}) implies that $$\partial_\nu w_j(x)>0,\quad x\in\partial M_1,\ j=1,2.$$ In particular, we have $\norm{\partial_\nu w_j}_{L^2(\Gamma_{out})}>0$. Taking the norm $L^2(\Gamma_{out})$ on both sides of \eqref{c1d}, for $k=1$, and applying \eqref{c1e}, we get
\bel{t111gg}\frac{t^{-1-\alpha_1}}{|\Gamma(-\alpha_1)|}\norm{\partial_\nu w_1}_{L^2(\Gamma_{out})}\leq\underset{t\to+\infty}{\mathcal O}(t^{-1-2\alpha_1})+\frac{t^{-1-\alpha_2}}{|\Gamma(-\alpha_2)|}\norm{\partial_\nu w_2}_{L^2(\Gamma_{out})}+\underset{t\to+\infty}{\mathcal O}(t^{-1-2\alpha_2}).\ee
 Assuming that $\alpha_j\neq1$, $j=1,2$, multiplying the expression \eqref{t111gg} by $|\Gamma(-\alpha_1)|t^{1+\alpha_1}$ and sending $t\to+\infty$, we get
$$\norm{\partial_\nu w_1}_{L^2(\Gamma_{out})}\leq0.$$
This contradicts  the fact that $\norm{\partial_\nu w_j}_{L^2(\Gamma_{out})}>0$, $j=1,2$, and it follows that $\alpha_1=\alpha_2$.
Using this result and repeating the arguments used at the end of the proof of Step 3 of Theorem \ref{t1}, we deduce that in all  case $\alpha_1=\alpha_2$. In the same way, following the proof of \cite[Corollary 2.4]{KLLY}, one can check that condition \eqref{c1d}, for $k\in\mathbb N$, implies that $(M_1,g_1)$ and $(M_2,g_2)$ are isometric and there exist $\phi\in C^\infty(M_2;M_1)$, an isomtery from $(M_2,g_2)$ to $(M_1,g_1)$, fixing $\partial M_1$ and depending only on $(M_j,g_j)$, $j=1,2$, such that  $q_2=q_1\circ\phi$. Therefore, fixing $\tilde{v}(t,x)=v_0^1(t,\phi(x))$, $(t,x)\in(0,+\infty)\times M_2$, we deduce that $\tilde{v}$ solves
$$\begin{cases}
\partial_t^{\alpha_1}\tilde{v}-\Delta_{g_2}\tilde{v}+q_2(x) \tilde{v} =  \sigma(t)f_1(\phi(x)), & \mbox{in }(0,+\infty)\times M_2,\\
\tilde{v}(t,x)= 0 & (t,x)\in (0,+\infty)\times\partial M_2, \\
\begin{cases}
\tilde{v}(0,x)=u_0^1(\phi(x)) & \mbox{if }0<\alpha\leq1,\\
\tilde{v}(0,x)=u_0^1(\phi(x)),\ \partial_t \tilde{v}(0,x)=0 & \mbox{if }1<\alpha<2,
\end{cases} & x\in M_2.
\end{cases}$$
Moreover, using the fact that $\phi$ fix the boundary $\partial M_1$, we get
$$\partial_\nu\tilde{v}(t,x)=\partial_\nu [v_0^1(t,\phi(x))]=\partial_\nu v_0^1(t,x),\quad (t,x)\in (0,+\infty)\times\Gamma_{out}.$$
Combining this with \eqref{c1d} for $k=0$ we deduce that
$$\partial_\nu\tilde{v}(t,x)=\partial_\nu v_0^2(t,x),\quad t\in(0,+\infty),\ x\in \Gamma_{out}$$
and we deduce that $v_0=\tilde{v}-v_0^2$ satisfies the following conditions
$$\begin{cases}
\partial_t^{\alpha_1}v_0-\Delta_{g_2}v_0+q_2(x) v_0 =  \sigma(t)(f_1(\phi(x))-f_2(x)), & \mbox{in }(0,+\infty)\times M_2,\\
v_0(t,x)= 0 & (t,x)\in (0,+\infty)\times\partial M_2, \\
\partial_\nu v_0(t,x)= 0 & (t,x)\in (0,+\infty)\times\Gamma_{out}, \\
\begin{cases}
v_0(0,x)=(u_0^1(\phi(x))-u_0^2(x)) & \mbox{if }0<\alpha\leq1,\\
v_0(0,x)=(u_0^1(\phi(x))-u_0^2(x)),\ \partial_t \tilde{v}(0,x)=0 & \mbox{if }1<\alpha<2,
\end{cases} & x\in M_2.
\end{cases}$$
Thus repeating the arguments used in the last step of the proof of Theorem \ref{t1}, we deduce from these conditions that
$$f_1(\phi(x))=f_2(x),\quad u_0^1(\phi(x))=u_0^2(x),\quad x\in M_2.$$
This completes the proof of the corollary.\qed

\ \\
\textbf{Proof of Corollary \ref{cc1}.}

Let $u^j$, $j=1,2$, be the solution of 

$$\begin{cases}
\partial_t^{\alpha}u^j -\Delta_{g_j} u^j +q_j(x) u^j =  F_j, & \mbox{in }(0,T)\times M,\\
u^j= \Phi, & \mbox{on } (0,T)\times\partial M, \\
\begin{cases}
u^j=u_0^j & \mbox{if }0<\alpha\leq1,\\
u^j=u_0^j,\quad \partial_t u_j=0 & \mbox{if }1<\alpha<2,
\end{cases} & \mbox{in }\{0\}\times M.
\end{cases}$$

 According to Corollary \ref{c1}, the proof of the theorem will be completed if we show that the conditions \eqref{cc1a}-\eqref{cc1b}, for some arbitrary chosen $T_0\in[\tau_2,T]$ and $\delta\in(0,T_0-\tau_1)$,  imply \eqref{t4d}. From now on we fix $T_0\in[\tau_2,T]$, $\delta\in(0,T_0-\tau_1)$ and we assume that the conditions \eqref{cc1a}-\eqref{cc1b} are fulfilled. We fix $u=u^1-u^2$, where we recall that $u^j\in  W^{\lceil\alpha\rceil,1}(0,T;H^{\frac{7}{4}}(M))\cap L^1(0,T;H^{2+\frac{7}{4}}(M))$,  $j=1,2$. We remark that $u$ satisfies the following conditions
$$\left\{ \begin{array}{lll} 
&\partial_t^{\alpha}u -\Delta_{g_1} u +q_1 u  =  G+F, & \textrm{in }  (0,T) \times M,\\
&u(t,x)  =  0, & (t,x) \in  (0,T) \times \partial M, \\  
&\pd_t^\ell u(0,\cdot)  =  u_\ell, & \mbox{in}\ M,\ \ell=0,...,\lceil\alpha\rceil-1.
\end{array}
\right.$$
In the above equation we set $F=F_1-F_2$, $u_0=u_0^1-u_0^2$, $u_1\equiv 0$ and
$$G=\Delta_{g_1} u_2-\Delta_{g_2}u_2 +(q_2-q_1) u_2\in L^1(0,T;H^s(M)),\quad  s\in(3/2,2).$$
Then, combining conditions \eqref{cc1c}-\eqref{cc1d} with the arguments used in Theorem \ref{tt2}, we find that
$$\partial_\nu u(t,x)=\partial_t^{\alpha}\partial_\nu u(t,x)=0,\quad (t,x)\in (T_0-\delta,T_0)\times\Gamma_{out}.$$
Therefore, repeating the arguments used at the end of the proof of Theorem \ref{tt2}, we obtain
$$\partial_\nu u(t,x)=0,\quad (t,x)\in (0,T_0)\times\Gamma_{out}$$
which implies \eqref{t4d}. Thus, following Corollary \ref{c1}, we deduce the results sated in Corollary \ref{cc1}.\qed

\section*{Acknowledgments}

 This work was supported by  the French National Research Agency ANR (project MultiOnde) grant ANR-17-CE40-0029.

\end{document}